\documentclass[10.5pt]{article}
\usepackage{color}
\usepackage{amsfonts}
\usepackage{graphicx}
\usepackage{amsmath,enumerate}
\usepackage{cases}
\usepackage{amsthm}
\usepackage{amssymb}
\usepackage{amsfonts}
\usepackage{latexsym,amssymb,amsfonts}
\usepackage{mathrsfs}
\usepackage{graphicx}
\usepackage{psfrag}
\usepackage{amsmath, amsbsy}
\usepackage{amsopn, amstext}
\usepackage{amsmath,multicol,amsopn}
\usepackage{latexsym,amssymb}
\usepackage{hyperref}

\usepackage{lineno}%
\oddsidemargin  = 0pt \evensidemargin = 0pt \marginparwidth = 1in
\marginparsep = 0pt \leftmargin     = 1.25in \topmargin =0pt
\headheight     = 0pt \headsep = 0pt \topskip =0pt
\footskip       =0.25in \textheight     = 9in \textwidth      =
6.5in
%
%
%
%
%

%

\def\dbB{{\mathbb{B}}}

\def\dbE{{\mathbb{E}}}
\def\dbF{{\mathbb{F}}}

\def\dbH{{\mathbb{H}}}
\def\dbI{{\mathbb{I}}}

\def\dbL{{\mathbb{L}}}

\def\dbP{{\mathbb{P}}}

\def\dbR{{\mathbb{R}}}
\def\dbS{{\mathbb{S}}}

\def\dbV{{\mathbb{V}}}

%
%

\def\d{\delta}
\def\e{\varepsilon}

\def\k{\kappa}

\def\t{\tau}
\def\f{\varphi}

\def\Th{\Theta}

\def\3n{\negthinspace \negthinspace \negthinspace }
\def\2n{\negthinspace \negthinspace }
\def\1n{\negthinspace }
\def\ns{\noalign{\smallskip} }

\def\ds{\displaystyle}
%
%

\def\G{\Gamma}
\def\D{\Delta}
\def\Th{\Theta}

\def\Om{\Omega}

%
%
\def\cA{{\cal A}}

\def\cC{{\cal C}}

\def\cF{{\cal F}}
\def\cG{{\cal G}}

\def\cJ{{\cal J}}

\def\cP{{\cal P}}
\def\cQ{{\cal Q}}
\def\cR{{\cal R}}

\def\cU{{\cal U}}

%

%

%

\def\mE{{\mathbb{E}}}

\def\ss{\smallskip}
\def\ms{\medskip}

\def\q{\quad}
\def\qq{\qquad}
\def\hb{\hbox}

%
%

\def\lan{\big\langle}
\def\ran{\big\rangle}

\def\esssup{\mathop{\rm esssup}}

\def\wt{\widetilde}
\def\cd{\cdot}

\def\deq{\mathop{\buildrel\D\over=}}

\def\({\Big (}
\def\){\Big )}
\def\[{\Big[}
\def\]{\Big]}

\def\={\buildrel \triangle \over =}

\def\ee{\end{equation}}
\def\bea{\begin{eqnarray}}
\def\eea{\end{eqnarray}}
\def\bt{\begin{theorem}}
\def\et{\end{theorem}}
\def\bc{\begin{corollary}}
\def\ec{\end{corollary}}
\def\bl{\begin{lemma}}
\def\el{\end{lemma}}
\def\bp{\begin{proposition}}
\def\ep{\end{proposition}}
\def\br{\begin{remark}}
\def\er{\end{remark}}
\def\ba{\begin{array}}
\def\ea{\end{array}}
\def\bde{\begin{definition}}
\def\ede{\end{definition}}

\newtheorem{lemma}{Lemma}[section]
\newtheorem{remark}{Remark}[section]
\newtheorem{example}{Example}[section]
\newtheorem{theorem}{Theorem}[section]
\newtheorem{corollary}{Corollary}[section]

\newtheorem{definition}{Definition}[section]
\newtheorem{proposition}{Proposition}[section]

\def\punct{}
\newtheoremstyle{dotless}{}{}{\rm}{}{\bf}{\punct}{.5em}{}
\theoremstyle{dotless}

\newenvironment{taggedassumption}[1]
{\taggedassumptionx}
{\endtaggedassumptionx}

\allowdisplaybreaks[4]
\makeatletter

\@addtoreset{equation}{section}
\makeatother


\title{\bf
Forward-Backward Stochastic Linear-Quadratic Optimal Controls: Equilibrium Strategies and Non-Symmetric  Riccati Equations
}

\author{
Qi L\"{u}\thanks{School
of Mathematics, Sichuan University, Chengdu
610064, China. (Email: {\tt lu@scu.edu.cn}). Qi L\"{u} is supported by the NSF of China under grant 12025105.}
~~~~
Bowen Ma\thanks{School of Mathematical Sciences, Chengdu University of Technology, Chengdu, 610059, China
(Email: {\tt albertmabowen@gmail.com}). }
~~~~
Hanxiao Wang\thanks{School of Mathematical Sciences, Shenzhen University, Shenzhen,
518060, China (Email: {\tt hxwang@szu.edu.cn}). Hanxiao Wang is supported in part by NSFC Grant 12201424,
Guangdong Basic and Applied Basic Research Foundation 2023A1515012104,
the Science and Technology Program of Shenzhen RCBS20231211090537064,
and Shenzhen University 2035 Program for Excellent Research Grant 2024C008.
}
}

\date{\today}


\begin{document}
\maketitle
\begin{abstract}
Linear-quadratic  optimal control problem for systems governed by forward-backward stochastic differential equations has been extensively studied over the past three decades. Recent research has revealed that for forward-backward control systems, the corresponding optimal control problem is inherently time-inconsistent. Consequently, the optimal controls derived in existing literature represent pre-committed solutions rather than dynamically consistent strategies.  In this paper, we shift focus from pre-committed solutions to addressing the time-inconsistency issue directly, adopting a dynamic game-theoretic approach to derive equilibrium strategies. Owing to the forward-backward structure, the associated equilibrium Riccati equation (ERE) constitutes a coupled system of matrix-valued, non-local ordinary differential equations with a non-symmetric structure. This non-symmetry introduces fundamental challenges in establishing the solvability of the EREs.
We overcome the difficulty by establishing a priori estimates for a combination of the solutions to EREs,
which,  interestingly, is a representation of the equilibrium value function.
\end{abstract}

\section{Introduction}\label{sec-intro}

Let $W(\cd)$ be a standard one-dimensional Brownian motion defined on a complete filtered probability space $(\Omega,\cF,\dbF,\dbP)$, where $\dbF=\{\cF_t\}_{t\geq 0}$ is the augmented natural filtration generated by $W(\cd)$.

Fix  $T>0$, and denote by $\dbS^n$ the subspace of $\dbR^{n\times n}$ consisting of symmetric matrices, and $\dbS_+^n$ the subset of $\dbS^n$ comprising positive semi-definite matrices.

For any initial time $t\in [0,T)$  and state $x$ in $L_{\cF_t}^2(\Omega;\dbR^n)$ (the space of $\cF_t$-measurable, square-integrable random variables), consider the following controlled forward-backward stochastic differential equation (FBSDE) on $[t,T]$:\vspace{-1mm}
\begin{equation}\label{state}
\begin{cases}
dX(s)=\big[A(s)X(s)+B(s)u(s)\big]ds+\big[C(s)X(s)+D(s)u(s)\big]dW(s),\\
dY(s)=-\big[\widehat{A}(s)X(s)+\widehat{B}(s)u(s)+\widehat{C}(s)Y(s) +\widehat{D}(s)Z(s)\big]ds+Z(s)dW(s), \\
X(t)=x,\q Y(T)=HX(T),
\end{cases}
\end{equation}
where  $A(\cd),C(\cd):[0,T]\to  \dbR^{n\times n}$, $B(\cd),D(\cd):[0,T]\to  \dbR^{n\times k}$, $\widehat{C}(\cd),\widehat{D}(\cd):[0,T]\to \dbR^{m\times m}$, $\widehat{A}(\cd):[0,T]\to \dbR^{m\times n}$, and
$\widehat{B}(\cd):[0,T]\to \dbR^{m\times k}$ are given bounded deterministic functions,  $H\in \dbR^{m\times n}$,
and $u(\cd)$ is the control process in $\cU[t,T]$, the space of $\dbR^k$-valued, square-integrable, $\dbF$-adapted processes on $ [t,T]\times \Omega$.

The associated cost functional is given by\vspace{-1mm}
\begin{align}
\cJ^0(t,x;u(\cd))
&=\frac{1}{2}\mE_t\Big[ \int_{t}^{T} \Big(\langle Q(s)X(s),X(s) \rangle +\langle R(s)u(s),u(s) \rangle+ \langle M(s)Y(s),Y(s) \rangle\nonumber\\
& \qq+ \langle N(s)Z(s),Z(s) \rangle \Big)ds+  \langle G_1X(T),X(T) \rangle + \langle G_2Y(t),Y(t) \rangle \Big],\label{cost}
\end{align}
where $Q(\cd): [0,T]\to \dbS^n$, $R(\cd):[0,T]\to \dbS^k$, and
$M(\cd),N(\cd):[0,T]\to \dbS^m$ are bounded matrix-valued functions,  $G_1\in \dbS^n$, and $G_2\in \dbS^m$.
The optimal control problem for \eqref{state} and \eqref{cost} can be stated as follows:

\ms
\noindent
{\bf Problem (FBSLQ).} For any given initial pair  $(t,x)\in [0,T)\times L_{\cF_t}^2(\Omega;\dbR^n)$,
find an optimal control $u^*(\cd) \in \cU[t,T]$ such that\vspace{-1mm}
\begin{equation}\label{J0-optimal}
\cJ^0(t,x;u^*(\cd))=\inf_{u(\cd) \in \cU[t,T]} \cJ^0(t,x;u(\cd))=V^0(t,x).
\end{equation}

The aforementioned problem is usually referred to as a linear-quadratic  optimal control (LQ, for short) problem for forward-backward stochastic differential equations.
A control process $u^*(\cd)\in \cU[t,T]$  satisfying \eqref{J0-optimal} is called an optimal control for the given initial pair $(t,x)$.

LQ problem for forward stochastic differential equations (SDEs) was first investigated by Wonham \cite{Wonham-1968} in the 1960s, and has since remained a cornerstone of stochastic control theory. While our current work focuses on forward-backward systems, we refer readers to several classical texts \cite{Yong-Zhou1999, Bensoussan2018, Sun-Yong-2020} for comprehensive surveys, tutorials, and extensive references on this foundational topic.

The solvability of general nonlinear backward stochastic differential equations (BSDEs) was first established
by Pardoux and Peng \cite{Pardoux-Peng-1990}, marking a breakthrough that established BSDEs as a fundamental tool in stochastic control theory and mathematical finance. This development naturally motivated the study of optimal control problems for systems governed by BSDEs or FBSDEs.

The study of optimal control problems for FBSDEs holds significant theoretical and practical importance, particularly in financial applications involving investment problems with specified future conditions (modeled as random variables) \cite{Duffie-Geoffard-Skiadas-1994,Karoui-Peng-Quenez-1997,Lim-Zhou-2001,Lim2004}. These problems also play an important role in stochastic differential game theory, where forward-backward LQ optimal control problems naturally emerge in leader-follower game formulations \cite{Bensoussan2018,Yong-2002}.

Due to these fundamental connections, LQ problems for FBSDEs have attracted considerable research attention. Important contributions include: Lim's application of backward LQ control theory to quadratic hedging and mean-variance portfolio selection \cite{Lim2004}; Huang, Li, and Wang's work on near-optimal control \cite{Huang-Li-Wang-2010}; Wang, Wu, and Xiong's study of partial information problems \cite{Wang-Wu-Xiong-2018}; Sun, Wu, and Xiong's extension to indefinite cost functionals \cite{Sun-Wu-Xiong-2023}; Sun, Wang, and Wen's investigations of Stackelberg games \cite{Sun-Wang-Wen2021}; and Hu, Ji, and Xue's comprehensive analysis of fully coupled FBSDEs \cite{Hu-Ji-Xue-2023}.

Despite these advances, a fundamental challenge remains unresolved: How to establish the solvability of the Riccati equation associated with Problem (FBSLQ)? This persists as an open problem even under the strong assumption of uniformly positive definite weighting matrices in the cost functional.

Recent work by Wang, Yong, and Zhou \cite{Wang-Yong-Zhou-2024} has revealed that the forward-backward structure of the state equation inherently induces time-inconsistency in the corresponding optimal control problem. This  discovery establishes that Problem (FBSLQ) is generically time-inconsistent. For concrete illustrations of this phenomenon, we refer to \cite[Example 1.1]{Wang-Yong-Zhou-2024} and \cite[Example 1.2]{Lu-2024}, which provide explicit counterexamples to demonstrate this time-inconsistency.

In practical applications, time inconsistency poses a significant challenge: the optimal control derived at the initial time may lose its optimality as the system evolves, necessitating continuous strategy adjustments by decision-makers.
The earliest mathematical consideration of time-inconsistent optimal controls was given by Strotz
\cite{Strotz-1955}, followed by the recent works of
Ekeland and Pirvu \cite{Ekeland2008}, Ekeland and Lazrak \cite{Ekeland2010},
Bj\"ork, Khapko, and Murgoci  \cite{Bjork-2017}, and Yong \cite{Yong-2014,Yong-2017} for various kinds of problems,
to mention a few.
For the time-inconsistent stochastic LQ problems,
Hu, Jin, and Zhou \cite{Hu-2012,Hu-2017} considered the equilibrium solution in the open-loop sense;
Yong \cite{Yong-2017} studied the closed-loop equilibrium strategy by introducing the multi-person differential game method;
Dou and L\"{u} \cite{Dou-2020} extended the result of \cite{Yong-2017} to the infinite-dimensional system described by a stochastic evolution equation;
Lazrak, Wang, and Yong \cite{Lazrak-WY2023} studied a special  zero-sum game problem motivated by some financial applications;
Cai et al. \cite{Cai-2022}  gave some equivalent characterizations of the existence of
a closed-loop equilibrium strategy for ODEs.

Further, to account for subjective time preference--another fundamental source of time-inconsistency--we introduce the following generalized cost functional extending \eqref{cost}:\vspace{-2mm}
\begin{align}
\cJ(t,x;u(\cd))
&=\frac{1}{2}\mE_t\Big[ \int_{t}^{T} \Big(\langle Q(t,s)X(s),X(s) \rangle +\langle R(t,s)u(s),u(s) \rangle+ \langle M(t,s)Y(s),Y(s) \rangle\nonumber\\
& \qq+ \langle N(t,s)Z(s),Z(s) \rangle \Big)ds+  \langle G_1(t)X(T),X(T) \rangle + \langle G_2(t)Y(t),Y(t) \rangle \Big].\label{TI-cost}
\end{align}
We refer to the control problem with state equation \eqref{state} and cost functional \eqref{TI-cost} as \textbf{Problem (TI-FBSLQ)}.

To continue, we introduce some notations that will be used consistently throughout this paper.
For any matrices $M\in \dbR^{n\times m}$, let $|M|$ denote its  spectral norm,
which equals the square root of the largest eigenvalue of $M^\top M$,
and $|M|_{\rm Tr}$  its trace norm,
which equals the sum of the square roots of the eigenvalues of $M^\top M$.
Furthermore, for time interval $0\leq t_1\leq t_2\leq T$, we introduce the triangular domain\vspace{-1mm}
$$
\D^*[t_1,t_2]=\{(t,s)\in[t_1,t_2]^2\hbox{ with } t\leq s\}.
$$
For an arbitrary Euclidean space $\dbH$, we introduce the following function spaces:\vspace{-1mm}
\begin{align*}
& L^2(0,T;\,\dbH)= \Big\{\f:[0,T]\to\dbH\bigm|\f(\cd)~\hb{is measurable, }\int_0^T|\f(s)|^2ds<\infty \Big\};\\
& L^\infty(0,T;\,\dbH)= \Big\{\f:[0,T]\to\dbH\bigm|\f(\cd)~\hb{is measurable, }\esssup_{0\leq s\leq T}|\f(s)|<\infty \Big\};\\
& C([0,T];\,\dbH)= \Big\{\f:[0,T]\to\dbH\bigm|\f(\cd)~\hb{is continuous, }\sup_{0\leq s\leq T}|\f(s)|<\infty \Big\};\\
& C(\D^*[0,T];\,\dbH)= \Big\{\f:\D^*[0,T]\to\dbH\bigm|\f(\cd,\cd)~\hb{is continuous, }\sup_{0\leq t\leq s\leq T}|\f(t,s)|<\infty \Big\};\\
&C^{\infty}([0,T];\,\dbH)= \Big\{\f:[0,T]\to\dbH\bigm|\f(\cd)~\hb{is infinitely differentiable}\Big\};\\
& L_{\cF_t}^2(\Om;\,\dbH)
=\Big\{\f:\Om\to\dbH\bigm|\f(\cd)~\hb{is $\cF_t$-measurable},\q \dbE\big[|\f(\omega)|^2\big]<\infty \Big\};\\
& L_\dbF^2(\Om;C([0,T];\dbH))
=\Big\{\f:[0,T]\times\Om\to\dbH\bigm|\f(\cd)~\hb{is $\dbF$-adapted, pathwise continuous, }  \dbE\[\ds\sup_{0\leq s\leq T}|\f(s)|^2\]<\infty \Big\};\\
& L_\dbF^2(0,T;\,\dbH)
=\Big\{\f:[0,T]\times\Om\to\dbH\bigm|\f(\cd)~\hb{is $\dbF$-progressively measurable, }   \dbE\[\int_0^T|\f(s)|^2ds\]<\infty \Big\}.
\end{align*}

Next, we introduce the following assumptions:\vspace{-1mm}
\begin{taggedassumption}{(H1)}\label{H1}
The system coefficients and weighting matrices satisfy the following regularity conditions:
\begin{align}
&A(\cd),C(\cd) \in L^\infty(0,T;\dbR^{n\times n}),\q B(\cd),D(\cd) \in L^\infty(0,T;\dbR^{n\times k}), \nonumber\\
&\widehat{A}(\cd)\in L^\infty(0,T;\dbR^{m\times n}),\q \widehat{B}(\cd)\in L^\infty(0,T;\dbR^{m\times k}),\q\widehat{C}(\cd),\widehat{D}(\cd)\in L^\infty(0,T;\dbR^{m\times m}), \nonumber\\&
Q(\cd,\cd)\in C([0,T]^2;\dbS^n),\q R(\cd,\cd)\in C([0,T]^2;\dbS^k),\q M(\cd,\cd),N(\cd,\cd)\in C([0,T]^2;\dbS^m),\nonumber \\&
G_1(\cd)\in C([0,T];\dbS^n),\q G_2(\cd)\in C([0,T];\dbS^m).
\end{align}
\end{taggedassumption}

Under Assumption \ref{H1}, the classical well-posedness results of SDEs and BSDEs (see, e.g.,\cite[Sections 3.1 and 4.1]{Lu-Zhang-2021}) guarantees that
for any initial pair  $(t,x)\in [0,T)\times L_{\cF_t}^2(\Omega;\dbR^n)$ and control $u(\cd) \in \cU[t,T]= L_{\dbF}^2(t,T;\dbR^{k})$,
the state equation \eqref{state} admits a unique solution $\big( X(\cd),Y(\cd),Z(\cd)  \big)\in L^2_{\dbF}(\Omega;C([0,T];\dbR^n))\times
L^2_{\dbF}(\Omega;C([0,T];\dbR^m)) \times L^2_{\dbF}(0,T;\dbR^m)$.
Then the cost functional \eqref{TI-cost} is well-defined.\vspace{-1mm}

\begin{definition}\label{de1}\rm
A matrix-valued function ${\Theta}(\cdot)\in L^{\infty} (0,T;\mathbb{R}^{k\times n})$ is called a {\it closed-loop equilibrium strategy}  for Problem (TI-FBSLQ) if for every initial pair $(t,x)\in [0,T)\times L_{\mathcal{F}_t}^2(\Omega;\mathbb{R}^n)$ and any perturbation
$v\in L_{\mathcal{F}_t}^2(\Omega;\mathbb{R}^k)$, the following variational inequality holds:\vspace{-1mm}
\begin{equation}
\varliminf_{\e \to 0} \frac{\cJ\left(t, \bar{X}(t) ;
u^{\varepsilon}(\cdot)\right)-\cJ\left(t, \bar{X}(t) ;
\bar{u}(\cdot)\right)}{\e} \geq 0,\qquad
\mathbb{P}\mbox{\rm-a.s.,}
\end{equation}
where the reference trajectory $(\bar{X}(\cd), \bar{Y}(\cd), \bar{Z}(\cd))$ satisfies:\vspace{-1mm}
\begin{equation}\label{de1-eq2}
\begin{cases}
\ds	d\bar{X}(s)=\big[A(s)+B(s)\Theta(s)\big]\bar{X}(s)ds+\big[C(s)+D(s)\Theta(s)\big]\bar{X}(s)dW(s), & s\in [t,T], \\
\ns\ds	d\bar{Y}(s)=-\big[ \big(\widehat{A}(s) + \widehat{B}(s)\Theta(s)\big)\bar{X}(s)+\widehat{C}(s)\bar{Y}(s) +\widehat{D}(s)\bar{Z}(s)\big]ds+\bar{Z}(s)dW(s), & s\in [t,T],\\
\ns\ds	\bar{X}(t)=x,\q \bar{Y}(T)=H\bar{X}(T),
\end{cases}
\end{equation}
the control processes are given by \vspace{-1mm}
\begin{equation}\label{de1-eq1}
\bar{u}(s)=\Theta(s)\bar{X}(s),\q
u^{\e}(s)=v\chi_{[t,t+\e]}(s)+{\Theta}(s){X}^{\e}(s),
\end{equation}
and $ X^{\e}(\cd)$ denotes the solution to the state equation \eqref{state} corresponding to the perturbed control $u^{\e}(\cdot)$.
\end{definition}

In the rest of this paper, for notational simplicity, given any ${\Theta}(\cdot)\in L^{\infty} (0,T;\mathbb{R}^{k\times n})$, we denote\vspace{-2mm}
\begin{equation}\label{Atheata}
	\begin{aligned}
		A_\Theta(s)\=A(s)+B(s)\Theta(s) ,\q C_\Theta(s)\= C(s)+D(s)\Th(s),\q \widehat{A}_\Theta(s)\=\widehat{A}(s)+\widehat{B}(s)\Th(s),\q s\in[0,T].
	\end{aligned}
\end{equation}
\begin{remark}
The concept of closed-loop equilibrium strategies follows from the framework established by Bj\"ork,  Khapko, and  Murgoci \cite{Bjork-2017}. Within our linear-quadratic setting, we may restrict our attention to linear feedback strategies without loss of generality. This approach aligns with the economic intuition developed in \cite{Bjork-2017,Hu-2012,Yong-2017}, where at each decision time $t$, the controller engages in a cooperative game with their future selves. Specifically:\vspace{-1mm}
\begin{itemize}
\item The controller optimizes the cost functional over an infinitesimal time interval $[t,t+\e)$.
\item The optimization is performed under the constraint that the control is no more optimal beyond $t+\e$.
\item This leads to a time-consistent strategy that accounts for future decision-making processes.
\end{itemize}
\end{remark}

To establish the existence of closed-loop equilibrium strategies for Problem (TI-FBSLQ),
inspired by \cite[Theorem 1.6]{Lu-2024}, we introduce
the following equilibrium Riccati equation (ERE, for short):
\begin{equation}\label{ERE}
\begin{cases}\ds
\frac{d P_1(t,s)}{ds}+ P_{1}(t,s) A_\Theta(s)  +A_\Theta(s)^\top  P_1(t,s) +C_\Theta(s)^\top  P_{1}(t,s)C_\Theta(s)  \\\ns\ds\q
+Q(t,s)+\Th(s)^\top R(t,s)\Th(s) +P_2(s)^\top M(t,s)P_2(s) \\\ns\ds\q
+ C_\Theta(s)^\top P_2(s)^\top N(t,s) P_2(s) C_\Theta(s)=0, & 0\leq t\leq s\leq T,\\\ns\ds
\frac{dP_2(s)}{ds}+ P_2(s)A_\Theta(s)+  \widehat{A}_\Theta(s)  +\widehat{C}(s) P_2(s) +\widehat{D}(s) P_2(s)C_\Theta(s) =0, & 0\leq s\leq T,\\\ns\ds
P_1(t,T)=G_1(t),	\q P_2(T)=H, & 0\leq t\leq T,
\end{cases}
\end{equation}
where \vspace{-3mm}
\begin{align}\label{Theta}
\Theta(s)&=-\big[R(s,s)+D(s)^\top\big(P_1(s,s) + P_2(s)^\top N(s,s)P_2(s)\big)D(s)\big]^{-1} \nonumber  \\
&\q\times \big[B(s)^\top P_1(s,s)+D(s)^\top\big(P_1(s,s)+P_2(s)^\top N(s,s)P_2(s)\big)C(s)\nonumber\\
&\qq+ \big(\widehat{B}(s)^\top+B(s)^\top P_{2}(s)^\top + D(s)^\top P_{2}(s)^\top  \widehat{D}(s)^\top \big) G_2(s)P_{2}(s) \big],\qq 0\leq s\leq T.
\end{align}

Before studying the well-posedness of ERE \eqref{ERE}, we first examine an illustrative example that demonstrates how the highly nonlinear nature of $P_2(\cd)$ can result in the non-existence of solutions to \eqref{ERE}.

\begin{example}\label{exam1}
\rm
Let us examine the ERE  \eqref{ERE} with coefficients:
$
\widehat{A}=\widehat{B}=\widehat{C}=\widehat{D}=A=C=D=Q=N=G_2=0,
B=R=H=G_1=1,\, M=2e,\,T=2.
$
Under these conditions, the ERE \eqref{ERE} simplifies to \vspace{-2mm}
\begin{align*}
\begin{cases}
\dot{P}_1(s)= P_1(s)^2-2e P_2(s)^2,& s\in [0,2],\\
\dot{P}_2(s)=P_2(s)P_1(s),& s\in [0,2],\\
P_1(2)=1,\q P_2(2)=1.
\end{cases}
\end{align*}
Through direct computation, we obtain $P_2(s)=e^{-\int_{s}^{2}P_1(\tau)d\tau }$ and $ \dot{P}_1(s)=P_1(s)^2-2e^{P_1(s)^2}$.
Consider the auxiliary equation:\vspace{-2mm}
\begin{equation*}
\begin{cases}
\dot{P}(s)=-P(s)^2,\q s\in (1,2],\\
P(2)=1,
\end{cases}
\end{equation*}
which admits the explicit solution $P(s)=\frac{1}{s-1}$ for $s\in (1,2]$.
Now, suppose by contradiction that $(P_1(\cd),P_2(\cd))$ exists globally on $[0,2]$.
Noting the inequality $x^2-2e^{x^2}< -x^2$ and applying the comparison principle, we deduce that $P(s)<P_1(s)$ for $s\in (1,2)$. However, this implies $\lim_{s\to 1+}P_1(s)=+\infty$,
yielding the desired contradiction to the assumed global existence.
\end{example}

Motivated by Example \ref{exam1}, we introduce the following assumption:

\begin{taggedassumption}{(H2)}\label{H2}
There exists a constant $\delta>0$, such that for any $0\leq t\leq s \leq T$,\vspace{-2mm}
\begin{equation*}
R(t,s)\geq \delta I,\, G_2(t)\geq \delta I, \text{and } Q(t,s)\geq 0,\,M(t,s)\geq 0,\,N(t,s)\geq 0,\, G_1(t)\geq 0.
\end{equation*}
\end{taggedassumption}
\begin{taggedassumption}{(H3)}\label{H3}
For any $0\leq t\leq \tau \leq s\leq  T$, the following monotonic condition holds:\vspace{-2mm}
\begin{align*}
&Q(t,s)\leq Q(\tau,s) \leq \widehat{Q}, \q R(t,s)\leq R(\tau,s)\leq \widehat{R}, \q M(t,s)\leq M(\tau,s) \leq \widehat{M}, \nonumber\\&
N(t,s)\leq N(\tau,s)\leq \widehat{N},\q  G_1(t)\leq G_1(\tau) \leq \widehat{G}_1,\q G_2(t) \leq G_2(\tau)\leq  \widehat{G}_2.\vspace{-2mm}
\end{align*}
where $\widehat{Q},\widehat{R},\widehat{M},\widehat{N}, \widehat{G}_1, \widehat{G}_2$ are given positive definite matrices.
\end{taggedassumption}

\begin{remark}
Typical examples satisfying Assumption \ref{H3} include:
\begin{align*}
Q(t,s)=\lambda(s-t)Q(s),\q R(t,s)=\lambda(s-t)R(s),
\end{align*}
where $\lambda(\cd)$ represents the discounting function, and $Q(\cd)$ and $R(\cd)$ are  positive definite.
Such monotonic conditions are also assumed in \cite{Yong-2017}.
\end{remark}

The following result shows the equivalence between the existence of a closed-loop equilibrium strategy and the well-posedness of ERE \eqref{ERE}.
This result can be regarded as the verification theorem of Problem (TI-FBSLQ).
It is a direct consequence of our previous work \cite[Theorem 1.6]{Lu-2024}, and we put it here for completeness and the convenience of readers.

\begin{proposition}\label{Th1}
Let Assumptions \ref{H1}--\ref{H3} hold.
Then Problem (TI-FBSLQ) admits a closed-loop equilibrium strategy if and only if
the ERE \eqref{ERE} admits a solution $(P_1(\cd,\cd),P_2(\cd))\in C(\Delta^*[0,T];\dbS^n_+)\times C([0,T];\dbR^{m\times n})$.
Moreover, in this case, the function $\Theta(\cd)$ defined by \eqref{Theta} is a closed-loop equilibrium strategy.
\end{proposition}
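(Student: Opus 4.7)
The plan is to recognize Proposition 1.1 as a direct specialization of the general verification theorem (Theorem 1.6 of \cite{Lu-2024}) to the forward-backward linear-quadratic setting, so that the bulk of the work is bookkeeping: identifying how the abstract equilibrium characterization reduces to the concrete ERE (1.9) with feedback (1.10).

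The first step is to exploit the linearity of the closed-loop system. For any $\Theta\in L^\infty(0,T;\dbR^{k\times n})$, system \eqref{de1-eq2} couples a linear forward SDE for $\bar X$ with a linear BSDE carrying terminal datum $H\bar X(T)$. Standard BSDE theory then yields a deterministic $P_2(\cd)\in C([0,T];\dbR^{m\times n})$ such that $\bar Y(s)=P_2(s)\bar X(s)$ and $\bar Z(s)=P_2(s)C_\Theta(s)\bar X(s)$; differentiating this representation and matching drifts produces the second ODE of \eqref{ERE} with $P_2(T)=H$. Substituting these expressions into the cost functional \eqref{TI-cost} collapses it to a pure quadratic form in $\bar X$, and a Lyapunov-type identity along $\bar X$ produces $P_1(t,\cd)$ satisfying the first ODE of \eqref{ERE} with $P_1(t,T)=G_1(t)$. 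Symmetry of $P_1(t,\cd)$ follows from the symmetry of the weighting matrices, and $P_1(t,\cd)\geq 0$ from Assumption \ref{H2} applied to the cost representation.

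To derive \eqref{Theta} and close the verification, I would introduce the spike variation $u^\e=v\chi_{[t,t+\e]}+\Theta X^\e$ with $v\in L^2_{\cF_t}(\Om;\dbR^k)$ and expand $\cJ(t,\bar X(t);u^\e)-\cJ(t,\bar X(t);\bar u)$ to order $\e$. Using the ERE identities to cancel the adjoint-type cross terms, the expansion takes the schematic form
\begin{align*}
\cJ(t,\bar X(t);u^\e)-\cJ(t,\bar X(t);\bar u)=\e\,\mE_t\big[\langle\Xi(t)v+2\Pi(t)\bar X(t),v\rangle\big]+o(\e),
\end{align*}
where $\Xi(t):=R(t,t)+D(t)^\top\big(P_1(t,t)+P_2(t)^\top N(t,t)P_2(t)\big)D(t)$ is uniformly positive definite under \ref{H2}, and $\Pi(t)$ collects the linear-in-$\bar X(t)$ terms that appear in \eqref{Theta}. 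Arbitrariness of $v$ in the equilibrium inequality forces $\Xi(t)\Theta(t)\bar X(t)+\Pi(t)\bar X(t)=0$, i.e., the feedback law \eqref{Theta}; conversely, once $\Theta$ is defined by \eqref{Theta} the first-order cross term vanishes and the leading contribution reduces to $\tfrac{\e}{2}\langle\Xi(t)v,v\rangle\geq 0$, verifying the equilibrium inequality.

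The main (and essentially only) obstacle is the term-by-term verification that the ERE identities produce exactly the cancellations promised by the abstract theorem of \cite{Lu-2024}; this is mechanical once the decoupling $\bar Y=P_2\bar X$ is in place and \eqref{Theta} is computed from the first-order condition. Notably, no new analytic input beyond Theorem 1.6 of \cite{Lu-2024} enters here; in particular, Assumption \ref{H3} plays no role at the verification-theorem stage and is reserved for the forthcoming existence theory of the ERE.
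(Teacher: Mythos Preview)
Your proposal is correct and matches the paper's approach exactly: the paper does not give a self-contained proof of this proposition but simply records it as ``a direct consequence of our previous work \cite[Theorem 1.6]{Lu-2024},'' which is precisely the reduction you identify. Your additional sketch of the decoupling $\bar Y=P_2\bar X$, the Lyapunov representation of $P_1$, and the spike-variation derivation of \eqref{Theta} goes beyond what the paper writes here, but it is consistent with the argument in \cite{Lu-2024} and with the paper's Remark noting that \ref{H3} is not needed at the verification stage.
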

\vspace{-2mm}
\begin{remark}
Assumptions \ref{H2} and \ref{H3} are  imposed to ensure the well-posedness of the equilibrium Riccati equation. However, this requirement can be relaxed when focusing solely on the verification theorem for Problem (TI-FBSLQ), as demonstrated in \cite{Lu-2024}.
\end{remark}

From Proposition \ref{Th1}, we see that the central challenge in constructing a closed-loop equilibrium strategy for Problem (TI-FBSLQ) reduces to establishing the solvability of \eqref{ERE}, which represents the primary focus of the current work.

Although we introduce Assumptions \ref{H1}--\ref{H3} to study the ERE \eqref{ERE}, establishing the well-posedness of ERE \eqref{ERE} is still nontrivial. This complexity arises from several distinctive features that differentiate ERE \eqref{ERE} from classical stochastic LQ Riccati equations (see e.g., \cite{Sun-Yong-2020}):

\ss

1. {\bf Non-local Temporal Dependence}.
The time-preference in the weighting matrices causes $P_1(\cd,\cd)$ to depend on two time variables $(t,s)$,
making it non-local in the first time variable $t$. Alternatively,
if we treat $t$ as the index of a component,
then the ERE \eqref{ERE} above can be viewed as an (uncountably) infinite dimensional systems of
matrix-valued ordinary differential equations (ODEs),
which is self interacted via the diagonal term $P_1(s,s)$.

\ss

2. {\bf Non-symmetric Coupled Structure}.
The ERE \eqref{ERE} consists of two coupled matrix-valued ODEs where $P_2(\cd)$ breaks the symmetry $P_2(\cd)\equiv P_2(\cd)^\top$.
Moreover, as mentioned before, Problem (FBSLQ) (in which the cost functional reduces to \eqref{cost}) remains time-inconsistent.
In this case,
the corresponding ERE \eqref{ERE} reduces to the following coupled ODEs:
\begin{equation*}
\begin{cases}\ds
\frac{d P_1(s)}{ds}+ P_{1}(s) A_\Theta(s)  +A_\Theta(s)^\top  P_1(s) +C_\Theta(s)^\top  P_{1}(s)C_\Theta(s) +Q(s)+\Th(s)^\top R(s)\Th(s)  \\\ns\ds\q
+P_2(s)^\top M(s)P_2(s)
+ C_\Theta(s)^\top P_2(s)^\top N(s) P_2(s) C_\Theta(s)=0, & 0\leq s\leq T,\\\ns\ds
\frac{dP_2(s)}{ds}+ P_2(s)A_\Theta(s)+  \widehat{A}_\Theta(s)  +\widehat{C}(s) P_2(s) +\widehat{D}(s) P_2(s)C_\Theta(s) =0, & 0\leq s\leq T,\\\ns\ds
P_1(T)=G_1,	\q P_2(T)=H,
\end{cases}
\end{equation*}
with\vspace{-2mm}
\begin{align*}
\Theta(s)&\!=\!-\big[R(s)\!+\!D(s)^\top\!\!\big(P_1(s)\! + \!  P_2(s)^\top\! N(s)P_2(s)\big)D(s)\big]^{-1}\! \big[B(s)^\top\! P_1(s)\!+\!D(s)^\top\big(P_1(s)\!+\!P_2(s)^\top\! N(s)P_2(s)\big)C(s)\nonumber\\
&\qq+ \big(\widehat{B}(s)^\top+ {B}(s)^\top P_{2}(s)^\top  + D(s)^\top P_{2}(s)^\top  \widehat{D}(s)^\top \big) G_2(s)P_{2}(s) \big],\qq 0\leq s\leq T.
\end{align*}
Thus, the inherent non-symmetry of \eqref{ERE} stems from the forward-backward structure of the state equations, representing a significant departure from classical LQ theory.

\ss

From the above, we know that the essence of Problem (TI-FBSLQ) in our work lies in two aspects:
First, the forward-backward structure of the state equation,
which brings the complicated coupled relationship and non-symmetric structure in ERE \eqref{ERE}.
This together with the high nonlinearity of ERE \eqref{ERE} constitutes the most fundamental challenge of this work;
Second, the time-preference for the weighting matrices,
which makes ERE \eqref{ERE} non-local in the first time variable,
or in another perspective, makes ERE \eqref{ERE} an  (uncountably) infinite dimensional systems of matrix-valued ODEs.

\begin{remark}
When restricted to forward SDE systems, the ERE \eqref{ERE} reduces to a non-local but symmetric matrix-valued ODE. For such cases, Yong \cite{Yong-2017} pioneered a solution approach via multi-person differential games. However, this method relies critically on:

1. The dynamic programming principle (DPP) of forward LQ systems;

2. Complete solvability of the pre-commitment Riccati equation.

Both conditions fail for our FBSDE framework, even in the simplified cost case \eqref{cost}, rendering Yong's methodology inapplicable.
\end{remark}
\begin{remark}
It is worthy of pointing out that the Riccati-type ODEs with non-symmetric structures
arise naturally in many important settings, such as  the study of open-loop time-inconsistent LQ control problems \cite{Hu-2012,Hu-2017},
non-zero LQ games \cite{Sun-Yong-2019}, and LQ mean-field games \cite[Chapter 6]{Bensoussan2013}.
However, establishing general existence results for such systems remains a major open challenge in stochastic control theory, with currently limited constructive solution approaches available.
\end{remark}

In this paper, we demonstrates that despite the inherent non-symmetry of  ERE  \eqref{ERE}, careful analysis establishes the existence of a unique solution. Our approach overcomes the fundamental challenges by treating the solution components $P_1(\cd,\cd)$ and $P_2(\cd)$ as an interconnected system rather than separate entities. The core methodology consists of three key steps:
\begin{itemize}
\item[(i)] A priori estimates for the combined quantity $P_1(t,t)+P_2(t)^\top G_2(t)P_2(t)$ are established through direct differentiation, under sufficient smoothness conditions on the system coefficients and weighting matrices.

\item[(ii)]
These estimates enable the construction of suitable Picard iteration sequences, forming the foundation for proving both local and global well-posedness of the ERE \eqref{ERE}.

\item[(iii)]
The results are extended to non-smooth cases via a mollification approximation technique.
\end{itemize}
The most challenging part of our approach lies in identifying an appropriate invariant
$P_1(t,t)+P_2(t)^\top G_2(t)P_2(t)$ for establishing a priori bounds.
A key point is that we observe that the function $\frac{1}{2}\lan \big(P_1(t,t)+P_2(t)^\top\! G_2(t)P_2(t)\big) x$, $x\ran;(t,x)\in[0,T]\times\dbR^n$
precisely represents the equilibrium value function of Problem (TI-FBSLQ). The derivation of these estimates requires meticulous analysis and remains non-trivial despite our structural insights.

In summary, this work addresses the unavoidable time-inconsistency inherent in Problem (TI-FBSLQ)--a consequence of its forward-backward system structure--by:
\begin{itemize}
\item Developing a framework for closed-loop equilibrium strategies;
\item Introducing novel techniques to handle the non-symmetric ERE structure;
\item Providing constructive methods for equilibrium strategy derivation.
\end{itemize}

Now we are ready to present main results of this paper.

\begin{theorem}\label{Th2}
Let Assumptions \ref{H1}--\ref{H3} hold. Then the ERE \eqref{ERE} admits
a unique solution $(P_1(\cd,\cd),$ $P_2(\cd))\in C(\Delta^*[0,T];\dbS^n_+)\times C([0,T];\dbR^{m\times n})$.
Furthermore, the combined quantity $P_1(t,t)+P_2(t)^\top G_2(t)P_2(t)$ admits the following uniform estimate:
\begin{equation}
\sup_{t\in[0,T]}\big|P_1(t,t)+P_2(t)^\top G_2(t)P_2(t) \big|\leq \cC^*_{\dbV},
\end{equation}
where the bounding constant $\cC^*_{\dbV}$ is explicitly given by
\begin{align}\label{def-CV}
& \cC^*_{\dbV}\= \! \Big[ \widehat{\cG}\!+\!\widehat{\cG}|H|^2\!+\! \int_{0}^{T} \(|\widehat{Q}|\!+\! \widehat{\cG}^2|\widehat{A}(s)|^2\) ds  \Big] e^{\int_{0}^{T}\big[ 2|A(s)|+|C(s)|^2  + \frac{n}{\delta} \big( 1+ |\widehat{M}| +|\widehat{N}||C(s)|^2 + 2\widehat{\cG}|\widehat{C}(s)|+ 2 \widehat{\cG}|C(s)||\widehat{D}(s)|  \big)\big]ds }.
\end{align}
Here, we define $\widehat{\cG}\=\max\{|\widehat{G}_1|,|\widehat{G}_2|\}$, and  $\delta$, $\widehat{Q}$, $\widehat{M}$, $\widehat{N}$ are given in Assumptions \ref{H2}--\ref{H3}.
\end{theorem}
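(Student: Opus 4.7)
The plan is to treat $P_1(\cd,\cd)$ and $P_2(\cd)$ as an indivisible pair and work with the combined quantity $\dbV(t):=P_1(t,t)+P_2(t)^\top G_2(t)P_2(t)$, which, as emphasized in the introduction, is (twice) the equilibrium value function. The argument proceeds in four stages: (i) derive an a priori bound on $\dbV(t)$ under extra smoothness of the coefficients; (ii) bootstrap that bound to control $P_2(\cd)$, the off-diagonal values $P_1(t,s)$, and the feedback $\Theta(\cd)$; (iii) obtain local existence for the coupled system \eqref{ERE}--\eqref{Theta} by Picard iteration and extend it globally using the a priori bound; (iv) lift the smoothness assumption by mollification.

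For stage (i), I would start from the integral form $P_1(t,t)=G_1(t)+\int_{t}^{T}F(t,\tau)\,d\tau$ obtained from the first equation in \eqref{ERE}, with $F(t,\tau)$ the full right-hand side, and differentiate in $t$ to get $\frac{d}{dt}P_1(t,t)=\dot G_1(t)-F(t,t)+\int_{t}^{T}\partial_t F(t,\tau)\,d\tau$. Assumption \ref{H3} gives $\partial_t Q,\partial_t R,\partial_t M,\partial_t N\le 0$ and $\dot G_1\le 0$, and a comparison argument for the linear matrix ODE satisfied by $\partial_t P_1(t,\cd)$ (with nonpositive source) yields $\partial_t P_1(t,\tau)\le 0$, so the integral contribution has the favourable sign. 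The surviving boundary term $-F(t,t)$ then combines with $\frac{d}{dt}[P_2(t)^\top G_2(t)P_2(t)]$, computed from the ODE for $P_2$, into an identity for $\frac{d}{dt}\langle\dbV(t)x,x\rangle$ in which the awkward terms $\Theta^\top R\Theta$, $P_1 D\Theta$, $P_2^\top N D\Theta P_2$, and $P_2^\top G_2\widehat B^\top\Theta$ cancel algebraically: this cancellation is exactly the defining property of $\Theta$ in \eqref{Theta} as the critical point of the local quadratic form (first-order optimality of the equilibrium game). What remains is a differential inequality of the form $\frac{d}{dt}\langle\dbV(t)x,x\rangle\ge -c(t)\langle\dbV(t)x,x\rangle - r(t)|x|^2$ with $c(\cd),r(\cd)$ explicit in the $L^\infty$-norms of $A,C,\widehat A,\widehat C,\widehat D$, in $\delta^{-1}$, and in $\widehat Q,\widehat M,\widehat N,\widehat G_1,\widehat G_2$. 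Running Gr\"onwall backward from $\dbV(T)=G_1(T)+H^\top G_2(T)H$ delivers precisely the bound $\cC^*_{\dbV}$ in \eqref{def-CV}.

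With $|\dbV(t)|\le\cC^*_{\dbV}$ available, positivity $P_1(t,t)\ge 0$ — which itself comes from representing $P_1$ backward in $s$ as the accumulation of a nonnegative integrand under Assumption \ref{H2} — gives $\delta|P_2(t)|^2\le \tr(P_2^\top G_2 P_2)\le n\cC^*_{\dbV}$, hence a uniform bound on $P_2(\cd)$. The feedback $\Theta(s)$ defined by \eqref{Theta} is then well posed, since $R(s,s)+D^\top(P_1(s,s)+P_2^\top N P_2)D\ge\delta I$ by \ref{H2}, and the bound propagates to $A_\Theta,C_\Theta$. For stage (iii), I would set up a contraction mapping on $C([T-\e,T];\dbS^n_+)\times C([T-\e,T];\dbR^{m\times n})$ for the diagonal pair $(P_1(s,s),P_2(s))$, with the off-diagonal values $P_1(t,s)$ for $t<s$ recovered by solving a linear matrix ODE in $s$; the a priori bound prevents blow-up, so the local solution extends to all of $[0,T]$, and uniqueness follows from a Gr\"onwall estimate on the difference of two solutions using the coercivity of $R+D^\top(\cds)D$. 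For stage (iv), mollifying $Q,R,M,N,G_1,G_2$ in $t$ preserves \ref{H1}--\ref{H3} (with the same $\delta$ and the same $\widehat Q,\widehat R,\widehat M,\widehat N,\widehat G_1,\widehat G_2$), so the regularized solutions $(P_1^\e,P_2^\e)$ satisfy the same uniform estimate $\cC^*_{\dbV}$; passing to the limit is then routine by Arzel\`a--Ascoli, using equicontinuity inherited from the ODEs and the uniform bound.

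The principal obstacle is the a priori estimate in stage (i). The non-symmetry and non-locality of \eqref{ERE} rule out the classical toolbox (no comparison principle for the full system, no representation of the solution as a supremum of linear flows), and the whole scheme rests on (a) recognizing that $P_1(t,t)+P_2^\top G_2 P_2$ is the correct invariant — a guess motivated by its interpretation as the equilibrium value function — and (b) carefully tracking how the non-local dependence of $P_1$ on its first argument interacts with the monotonicity in \ref{H3}. The most delicate step will be the bookkeeping for $\int_t^T\partial_t F(t,\tau)\,d\tau$, especially the justification that $\partial_t P_1(t,\tau)\le 0$ by a variational/comparison argument on the linear matrix ODE governing $\partial_t P_1$, which is precisely where Assumption \ref{H3} enters in an essential way.
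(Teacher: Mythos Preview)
Your overall architecture matches the paper's: define $\dbV(t)=P_1(t,t)+P_2(t)^\top G_2(t)P_2(t)$, differentiate in $t$, exploit the monotonicity from \ref{H3} to drop the nonlocal contribution, use the first-order condition defining $\Theta$ to absorb the $\Theta$-dependent terms, and run Gr\"onwall backward from $t=T$. The local existence by contraction and the passage to nonsmooth data by mollification also mirror the paper's Sections~\ref{sec5} and~\ref{sec6}.

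However, there is a concrete sign error that breaks stage~(i) as written. Assumption \ref{H3} states $Q(t,s)\le Q(\tau,s)$ for $t\le\tau$, i.e., the weighting matrices are \emph{nondecreasing} in the first argument; hence $\partial_t Q,\partial_t R,\partial_t M,\partial_t N\ge 0$ and $\dot G_1,\dot G_2\ge 0$, not $\le 0$ as you wrote. With the correct sign, the linear Lyapunov ODE satisfied by $\partial_t P_1(t,\cd)$ has nonnegative source and nonnegative terminal datum $\dot G_1(t)$, so the representation via $\Phi$ gives $\partial_t P_1(t,\tau)\ge 0$; in particular
\[
\dot G_1(t)+\int_t^T\partial_t F(t,\tau)\,d\tau=\partial_t P_1(t,\tau)\big|_{\tau=t}\ge 0,
\]
which is exactly the favourable contribution needed for $\dot{\dbV}(t)\ge -c(t)|\dbV(t)|-r(t)$. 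With your signs reversed you would obtain $\dot{\dbV}(t)\le(\cdots)$, which after integrating from $t$ to $T$ yields only a \emph{lower} bound on $\dbV$ and no control on $|P_2|$ or $|\Theta|$. Note also that the $\Theta$-terms do not ``cancel algebraically'' in the strict sense: after substituting \eqref{Theta}, what remains is the nonnegative quadratic form $\Theta^\top\big(R+D^\top(P_1+P_2^\top N P_2)D\big)\Theta$, which can be dropped only because it appears on the correct side of the inequality $\dot{\dbV}\ge(\cdots)$; with the wrong sign this step too would fail. Once you correct the direction of the monotonicity, your sketch coincides with the argument of Proposition~\ref{pri-est-Pi}.
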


\begin{remark}
In \cite{Lu-2024}, the authors established the solvability of the ERE \eqref{ERE} in the one-dimensional case under the  assumption that the diffusion coefficient satisfies a uniform lower bound, i.e., $D(s)^2>\d>0$ for all $s\in[0,T]$. These assumptions effectively impose linear growth conditions on the ERE \eqref{ERE} and eliminate its inherent non-symmetric structure, thereby circumventing the principal analytical challenges. Consequently, the main challenge is avoided, and the existence of solutions to \eqref{ERE} could be obtained through standard fixed-point arguments.
\end{remark}
\begin{remark}
We highlight that the solvability of the Riccati equation associated with pre-committed optimal controls,
in which the time-inconsistency is ignored, remains unclear.
However, by considering its closed-loop equilibrium strategies,
we can establish the well-posedness of the corresponding ERE \eqref{ERE}.
\end{remark}

The rest of the paper is organized as follows:
In Section \ref{sec3}, we present fundamental preliminary results that underpin the subsequent analysis. Sections \ref{sec4} and \ref{sec5} derive an essential priori estimates for the equilibrium Riccati equation (ERE) \eqref{ERE}, ultimately establishing its well-posedness under the assumption of smooth coefficients. Finally, Section \ref{sec6} generalizes these findings to the non-smooth case via a mollification approximation argument, thereby concluding the proof of Theorem \ref{Th2}.

%

\section{Preliminary results}\label{sec3}

\subsection{Some useful lemmas}

In this subsection, we present several technical lemmas that will be important in our subsequent analysis.
\begin{lemma}\label{lm3.1}
Let  $\cR(\cd)\in C([0,T];\,\dbS^{n })$ be a matrix-valued function such that $\cR(t)\leq \cR(s)$ for all $0\leq t\leq s \leq T$.
Then $\cR(\cd)$ is differentiable almost everywhere and its derivative is positive semi-definite.
\end{lemma}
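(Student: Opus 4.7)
The plan is to reduce the matrix-valued statement to the classical Lebesgue differentiation theorem for scalar monotone functions, applied to a finite family of quadratic forms and then combined via polarization.

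First I would fix an arbitrary vector $x\in\dbR^n$ and consider the scalar function $f_x(t):=\lan\cR(t)x,x\ran$. Since $\cR(t)\le\cR(s)$ in the PSD order for $t\le s$, one has $f_x(s)-f_x(t)=\lan[\cR(s)-\cR(t)]x,x\ran\ge 0$, so $f_x$ is non-decreasing on $[0,T]$, and continuous because $\cR(\cd)$ is. By Lebesgue's theorem on monotone functions, $f_x$ is differentiable almost everywhere.

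Second, to obtain a single null set independent of $x$, I would apply the previous step only to the finite family of vectors $\{e_i\}_{i=1}^{n}$ and $\{e_i+e_j\}_{1\le i<j\le n}$. The diagonal entries $\cR_{ii}(t)=f_{e_i}(t)$ are themselves non-decreasing and therefore differentiable a.e., while the polarization identity
\begin{equation*}
\cR_{ij}(t)=\tfrac{1}{2}\bigl(f_{e_i+e_j}(t)-f_{e_i}(t)-f_{e_j}(t)\bigr)
\end{equation*}
expresses each off-diagonal entry as a finite linear combination of a.e.-differentiable functions, hence also differentiable a.e. Intersecting the finitely many full-measure sets yields a set $E\subset[0,T]$ of full Lebesgue measure on which every entry, and therefore $\cR(\cd)$ itself as a matrix-valued map, is differentiable.

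Third, I would verify the sign of $\cR'(t)$ pointwise on $E$. For any $t\in E$, any $x\in\dbR^n$, and any $h>0$ small enough that $t+h\in[0,T]$, monotonicity gives
\begin{equation*}
\frac{\lan\cR(t+h)x,x\ran-\lan\cR(t)x,x\ran}{h}=\frac{f_x(t+h)-f_x(t)}{h}\ge 0.
\end{equation*}
Letting $h\downarrow 0$ and using that $\cR'(t)$ exists gives $\lan\cR'(t)x,x\ran\ge 0$ for every $x\in\dbR^n$, i.e.\ $\cR'(t)\in\dbS_+^n$. The only (mild) subtlety in this argument is that Lebesgue's theorem produces an $x$-dependent exceptional null set; the resolution is that only finitely many scalar functions need to be controlled, thanks to polarization, so the common exceptional set is still of measure zero. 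No further analytic obstacle arises.
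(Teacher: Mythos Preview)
Your proof is correct. Both your argument and the paper's reduce the matrix-valued statement to classical real-variable differentiability results, but via different bridges. You work with the quadratic forms $f_x(t)=\lan\cR(t)x,x\ran$ for the finite family $x\in\{e_i,\,e_i+e_j\}$, obtain a.e.\ differentiability of each from Lebesgue's theorem for monotone functions, and recover the off-diagonal entries by polarization. The paper instead observes that $\hbox{Trace}(\cR(\cd))$ is monotone and uses the norm inequality $|\cR(t)-\cR(s)|\le |\hbox{Trace}(\cR(t))-\hbox{Trace}(\cR(s))|$ (valid since $\cR(s)-\cR(t)\ge 0$) to conclude that every entry is of bounded variation, hence differentiable a.e. Your route avoids bounded-variation machinery entirely, relying only on monotone-function differentiability; the paper's route is slightly more compact, since a single scalar inequality simultaneously controls all entries without polarization. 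The verification that $\cR'(t)\ge 0$ is identical in both.
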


\begin{proof}
From the monotonicity of $\cR(\cd)$, we know that $\hbox{Trace}\big(\cR(\cd)\big)$ is a monotonically increasing function and therefore is of bounded variation.  The inequality
\begin{align*}
\big| \cR(t)-\cR(s) \big|\leq 	\big| \hbox{Trace}\big(\cR(t)\big)- \hbox{Trace}\big( \cR(s)\big) \big|,
\end{align*}
shows that each component of $\cR(\cd)$ is likewise of bounded variation. Therefore, $\cR(\cd)$ is differentiable almost everywhere on $[0,T]$.

At any point $t_0$ where $\cR(\cd)$ is differentiable, we have for arbitrary $x\in \dbR^n$ that  \vspace{-3mm}
\begin{align*}
x^{\top}\cR'(t_0) x = x^{\top}\lim_{t \to t_0 } \frac{\cR(t)-\cR(t_0)}{t-t_0} x \geq 0,
\end{align*}
which completes the proof.
\end{proof}	
\begin{lemma}{\cite[Lemma 7.3]{Yong-Zhou1999}}\label{lm3.2}
Suppose that
$\cA (\cd), \cC(\cd) \in C ([0,T];\,\dbR^{n\times n})$, $\cQ(\cd)\in C ([0,T];\,\dbS^{n})$ and $\cG \in \dbS^{n}$.
Let\vspace{-3mm}
\begin{equation*}
{\cP}(t)=\mE \Big[ \Psi(t,T)^\top \cG\Psi(t,T)+ \int_{t}^{T} \Psi(t,s)^\top  \cQ(s)\Psi(t,s)ds   \Big],  \q 0\leq t \leq T,
\end{equation*}
where\vspace{-3mm}
\begin{equation*}
\Psi(t,s)=I+\int_{t}^{s} \cA(r)\Psi(t,r)dr+\int_{t}^{s}\cC(r)\Psi(t,r)dW(r),  \q 0\leq t \leq s\leq  T.
\end{equation*}
Then we have\vspace{-3mm}
\begin{equation*}
\dot{\cP}(t)=-\cP(t)\cA(t)-\cA(t)^\top\cP(t)-\cC(t)^\top\cP(t)\cC(t)-\cQ(t),\q t\in [0,T].
\end{equation*}
\end{lemma}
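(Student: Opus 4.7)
The plan is to obtain the Riccati ODE as the infinitesimal version of a semigroup relation between $\cP(t)$ and $\cP(t+h)$, avoiding any direct differentiation of $\Psi(t,\cd)$ in the first variable. The tools are the composition (flow) property of the linear stochastic flow $\Psi$, an independence argument, and Itô's formula applied to $\Psi^\top M\Psi$ for a deterministic symmetric matrix $M$.

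\emph{Steps 1--2 (flow reduction and Itô calculation).} Pathwise uniqueness of the linear matrix SDE yields, for $0\le t\le r\le s\le T$, the composition $\Psi(t,s)=\Psi(r,s)\Psi(t,r)$, since both sides solve the same equation with the common initial condition $\Psi(t,r)$ at time $r$. Moreover, $\Psi(t,r)$ is $\cF_r$-measurable while $\Psi(r,\cd)$ is driven by Brownian increments after $r$ and is therefore independent of $\cF_r$. Applied with $r=t+h$ inside the expectation defining $\cP(t)$, together with Fubini, these two facts give
$$\cP(t)=\mE\big[\Psi(t,t+h)^\top \cP(t+h)\Psi(t,t+h)\big]+\int_t^{t+h}\mE\big[\Psi(t,s)^\top \cQ(s)\Psi(t,s)\big]\,ds.$$
Separately, Itô's formula together with $d_s\Psi=\cA(s)\Psi\,ds+\cC(s)\Psi\,dW$ gives, after taking expectations (which kills the $dW$-term),
$$\mE\big[\Psi(t,t+h)^\top M\Psi(t,t+h)\big]=M+\int_t^{t+h}\mE\big[\Psi(t,r)^\top\big(\cA(r)^\top M+M\cA(r)+\cC(r)^\top M\cC(r)\big)\Psi(t,r)\big]\,dr.$$
Choosing $M=\cP(t+h)$ and subtracting yields
$$\cP(t)-\cP(t+h)=\int_t^{t+h}\mE\big[\Psi(t,r)^\top\big(\cA(r)^\top\cP(t+h)+\cP(t+h)\cA(r)+\cC(r)^\top\cP(t+h)\cC(r)+\cQ(r)\big)\Psi(t,r)\big]\,dr.$$

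\emph{Step 3 (passage to the limit).} Dividing by $h$ and letting $h\to 0^+$, continuity of $\cA,\cC,\cQ$ on $[0,T]$, continuity of $\cP$ (which follows from its defining formula and standard linear-SDE estimates), and the fact that $\Psi(t,r)\to I$ in $L^2$ as $r\to t$ together produce
$$-\dot{\cP}(t)=\cA(t)^\top\cP(t)+\cP(t)\cA(t)+\cC(t)^\top\cP(t)\cC(t)+\cQ(t),$$
which is exactly the claimed identity. Running the same argument with $h$ replaced by $-h$ handles the left derivative and gives $C^1$ regularity of $\cP$. The main technical point is the reduction in Step 1, which hinges on combining the flow identity with the independence of $\Psi(t+h,\cd)$ from $\cF_{t+h}$ and pulling the deterministic matrix $\cP(t+h)$ out through the conditional expectation; once this ``semigroup'' formula is in hand, the rest is a routine Itô--Riccati manipulation, and the $L^2$-integrability needed to discard the martingale contribution follows immediately from the uniform boundedness of $\cA$ and $\cC$ on $[0,T]$.
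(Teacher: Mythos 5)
The paper does not prove this lemma at all; it is quoted directly from Yong--Zhou \cite[Lemma 7.3]{Yong-Zhou1999}, so there is no internal proof to compare against. Your argument is correct and is essentially the classical proof of the cited result: the flow property plus independence of $\Psi(r,\cdot)$ from $\cF_r$ yields the semigroup identity $\cP(t)=\mE\big[\Psi(t,t+h)^\top\cP(t+h)\Psi(t,t+h)\big]+\int_t^{t+h}\mE\big[\Psi(t,s)^\top\cQ(s)\Psi(t,s)\big]ds$, and Itô's formula applied to $\Psi^\top M\Psi$ with the deterministic matrix $M=\cP(t+h)$ then turns the difference quotient into the Lyapunov-type ODE. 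The only step worth tightening is the claim that discarding the martingale term needs only ``$L^2$-integrability'': the $dW$-integrand $\Psi^\top(\cC^\top M+M\cC)\Psi$ is quadratic in $\Psi$, so one needs fourth moments of $\Psi$ (or a localization argument); these are standard for linear SDEs with bounded coefficients, so the gap is purely cosmetic.
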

\begin{lemma}\label{lm3.3}
Let $\cG(\cd)\in C([0,T];\,\dbS^m)$  and  $\cQ(\cd,\cd)\in C([0,T]^2;\, \dbS^{n})$  satisfy
\begin{equation*}
\delta I \leq\cG(t)\leq \cG(s)\leq \widehat{G}\,\text{  and  }\,  \delta I \leq\cQ(t,s)\leq \cQ(r,s)\leq \widehat{Q},\q  0\leq t \leq r \leq s \leq T,
\end{equation*}
where $\d>0$ is a fixed constant. Then there exist a sequence $\{\cG_n(\cd)\}_{n\geq 1}\subset C^{\infty}([0,T];\,\dbS^m)$ satisfying
\begin{itemize}
\item $\delta I \leq \cG_n(t)\leq \cG_n(s) \leq \widehat{G},\q  0\leq t \leq s \leq T,$
\item $   \lim_{n\to \infty}\|\cG_n(\cd)-\cG(\cd)\|_{C([0,T];\,\dbS^m)}=0,$
\end{itemize}
and a sequence $\{\cQ_n(\cd,\cd)\}_{n\geq 1}\subset C([0,T]^2;\,\dbS^{n})$ satisfying
\begin{itemize}
\item $ \text{for any }   0\leq    s \leq T,\q  \cQ_n(t,s) \text{ is infinitely differentiable with respect to $t$, }$
\item $ \delta I \leq\cQ_n(t,s)\leq \cQ_n(r,s)\leq \widehat{Q},\q  0\leq t \leq r \leq s \leq T,$
\item $\lim_{n\to \infty}\int_{0}^{T} \sup_{t\in [0,s]}
\big|  \cQ_n(t,s)  -  \cQ(t,s) \big|^2
ds=0.$
\end{itemize}
\end{lemma}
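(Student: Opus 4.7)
The plan is to produce both approximating sequences by a mollification argument in the first time variable, with carefully chosen continuous extensions that preserve the monotonicity and the two-sided order bounds under convolution against a non-negative kernel. Let $\rho_n(\cd)$ denote a standard smooth mollifier on $\dbR$ supported in $[-1/n,1/n]$, with $\int_{\dbR}\rho_n=1$ and $\rho_n\geq 0$.

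For $\cG(\cd)$, first extend it continuously to $\dbR$ by setting $\cG(t)=\cG(0)$ for $t<0$ and $\cG(t)=\cG(T)$ for $t>T$. The extended function is continuous on $\dbR$, takes values in the order interval $[\delta I,\widehat{G}]$, and remains monotonically non-decreasing. I then define $\cG_n\=\cG*\rho_n$. Since $\rho_n\geq 0$, convolution preserves both the bound $\delta I\leq \cG_n\leq \widehat{G}$ and the monotonicity $\cG_n(t)\leq \cG_n(s)$ for $t\leq s$. Smoothness of $\cG_n$ is immediate, and uniform convergence on $[0,T]$ follows from the uniform continuity of the extended $\cG$ on any compact interval.

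For $\cQ(\cd,\cd)$ the main issue is that the monotonicity in the first variable is only asserted on the subset $\{t\leq s\}$, so one cannot directly mollify $\cQ$ in $t$ on $[0,T]^2$ without risking a loss of monotonicity near $t=s$. The plan is to work instead with the modified extension
$$
\wt{\cQ}(t,s)=\begin{cases}\cQ(0,s), & t<0,\\ \cQ(t,s), & 0\leq t\leq s,\\ \cQ(s,s), & t>s,\end{cases}
$$
defined on $\dbR\times[0,T]$. This $\wt{\cQ}$ is jointly continuous (the only non-trivial seam, $t=s$, matches by construction), takes values in $[\delta I,\widehat{Q}]$, and for every fixed $s$ is monotonically non-decreasing in $t$ on all of $\dbR$. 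Setting $\cQ_n(t,s)\=\int_{\dbR}\wt{\cQ}(t-u,s)\rho_n(u)\,du$ then gives a function that is $C^\infty$ in $t$, jointly continuous in $(t,s)$ by dominated convergence, and preserves both the order bound and the $t$-monotonicity thanks to the non-negativity of $\rho_n$.

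For the convergence assertion, note that on $\{0\leq t\leq s\leq T\}$ we have $\wt{\cQ}(t,s)=\cQ(t,s)$, so $\sup_{t\in[0,s]}|\cQ_n(t,s)-\cQ(t,s)|=\sup_{t\in[0,s]}|\cQ_n(t,s)-\wt{\cQ}(t,s)|$, and uniform continuity of $\wt{\cQ}$ on the compact set $[-1,T+1]\times[0,T]$ forces this supremum to zero uniformly in $s$, which is strictly stronger than the required $L^2(0,T)$-in-$s$ convergence and thus yields the last bullet. The one genuine obstacle I anticipate is precisely the design of $\wt{\cQ}$: one must freeze the function at its boundary value $\cQ(s,s)$ for $t>s$ (rather than use whatever values $\cQ(t,s)$ may take off the monotonic regime), so that continuity across $t=s$, the uniform upper bound by $\widehat{Q}$, and the global monotonicity in $t$ all hold simultaneously and survive convolution. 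Everything else is a routine verification.
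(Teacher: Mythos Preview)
Your proposal is correct and follows essentially the same approach as the paper: mollify in the first time variable against a non-negative kernel after extending by constant boundary values (in particular freezing $\cQ$ at $\cQ(s,s)$ for $t>s$) so that continuity, monotonicity, and the two-sided order bounds are preserved under convolution. The paper additionally arranges the extension to have compact support, but this is inessential for the stated conclusion, and your simpler constant extension works just as well.
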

\begin{proof}
	Define
	\begin{equation*}
	\eta (t)=\begin{cases}
	\(\int_{|t|\leq 1}e^{\frac{1}{t^2-1}}dt\)^{-1} e^{\frac{1}{t^2-1}}, &|t|\leq 1,\\
	0,& |t|>1,
	\end{cases}
	\end{equation*}
	and $\eta_{\e}(t)\deq  \frac{1}{\e} \eta(\frac{t}{\e})$ for $\e>0$.
	
	\ss
	
	Let
	\begin{equation*}
	\bar{\cG}(t)=\begin{cases}
	\int_{\dbR}\cG(0)\chi_{[-2T,0]}(s)\eta_{T/4}(t-s)ds,\q & t\in (-\infty,-T],\\
	\cG(0),\q & t\in [-T,0],\\
	\cG(t),\q & t\in [0,T],\\
	\cG(T),\q & t\in [T,2T],\\
	\int_{\dbR}\cG(T)\chi_{[T,3T]}(s)\eta_{T/4}(t-s)ds,\q & t\in [2T,+\infty).
	\end{cases}
	\end{equation*}
	We know that $\bar{\cG}(\cd)$ belongs to $C_c(\dbR;\,\dbS^n)$ and is monotonically increasing over $[-T,2T]$.
	Set
	\begin{equation}\label{pr-lm2.5-eq1}
	\cG_n(t)\= \int_{\dbR} \bar{\cG}(s)\eta_{\frac{1}{n}} (t-s)ds,\q t\in \dbR.
	\end{equation}
	We know that (see \cite[pp. 713--714]{Evans-2010}, for example) $\cG_n(\cd) \in C_c^{\infty}(\dbR;\,\dbS^n)$ and
	$\lim\limits_{n\to \infty}\|\cG_n(\cd)-\cG(\cd)\|_{C([0,T];\,\dbS^n)} =0$.
	
	For any $x\in\dbR^n$ and $t\in [0,T]$,
	\begin{align*}
	x^\top \delta I x &\leq x^\top 	\cG_n(t) x =  \int_{[-1/n,1/n]}  x^\top \bar{\cG}(t-s) x\, \eta_{\frac{1}{n}} (s)ds \leq  \int_{\dbR}  x^\top \widehat{G} x\, \eta_{\frac{1}{n}} (t-s)ds= x^\top 	\widehat{G} x,
	\end{align*}
	yielding the matrix inequality\vspace{-3mm}
	\begin{equation*}
	\delta I\leq\cG_n(t) \leq \widehat{G},\q t\in [0,T].
	\end{equation*}
	Moreover, for $0 \leq t_1 \leq t_2 \leq T$ and large $n$,
	\begin{align}
	&x^\top \cG_n(t_2) x-x^\top \cG_n(t_1) x = \int_{[-1/n,1/n]} x^\top \big[\bar{\cG}(t_2-s) -\bar{\cG}(t_1-s)\big] x\,  \eta_{\frac{1}{n}} (s)ds \geq 0.\label{app-eq1}
	\end{align}
	This implies that\vspace{-3mm}
	\begin{equation*}
	\cG_n(t)\leq \cG_n(s),\q 0\leq t\leq s \leq T.
	\end{equation*}

	Next let
	\begin{align*}
	&\bar{\cQ}(t,s)\=\begin{cases}
		\int_{\dbR}\cQ(0,s)\chi_{[-2T,0]}(r)\eta_{T/4}(t-r)dr,&  (t,s)\in \{  (t,s)\,|\, 0\leq s \leq T,-\infty <t\leq  -T \},\\
		\cQ(0,s),&    (t,s)\in \{  (t,s)\,|\, 0\leq s \leq T,-T \leq t\leq 0 \},\\
	 \cQ(t,s), & (t,s)\in \{  (t,s)\,|\, 0\leq s \leq T,0\leq t \leq s \},\\
		\cQ(s,s), &   (t,s)\in \{  (t,s)\,|\, 0\leq s \leq T,s \leq t\leq 2T \},\\
		\int_{\dbR}\cQ(s,s)\chi_{[T,3T]}(r)\eta_{T/4}(t-r)dr,&  (t,s)\in \{  (t,s)\,|\, 0\leq s \leq T,2T \leq t< +\infty \}.
	\end{cases}
	\end{align*}
	Similar to the case of $\cG(\cd)$,  for any fixed $s\in [0,T]$,
	we know that $\bar{\cQ}(\cd,s)$ belongs to $C_c(\dbR;\,\dbS^n)$ and is monotonically increasing over $[-T,2T]$.
	Set
	\begin{equation*}
	\nonumber\cQ_n(t,s)\= \int_{\dbR} \bar{\cQ}(r,s)\eta_{\frac{1}{n}} (t-r)dr,\q  (t,s) \in  \dbR\times [0,T].
	\end{equation*}
	We know that, for any fixed $s\in [0,T]$,  $\cQ_n(\cd,s) \in C_c^{\infty}(\dbR;\,\dbS^n)$ and
	\begin{equation*}
	\lim_{n\to \infty}\sup_{t\in [0,s]}|\cQ_n(t,s)-\cQ(t,s)| =0,\q \forall s\in [0,T].
	\end{equation*}
	Similar to \eqref{app-eq1}, we have\vspace{-3mm}
	\begin{equation*}
	\cQ_n(t,s)\leq \cQ_n(r,s),\q 0\leq t \leq r \leq s \leq T.
	\end{equation*}
	Note for any $x\in\dbR^n$ and $0\leq t  \leq s \leq T$, it holds that
	\begin{align*}
	x^\top	\delta I  x  \leq \int_{[-1/n,1/n]}  x^\top \bar{\cQ}(t-r,s) x\, \eta_{\frac{1}{n}} (r)dr \leq  \int_{\dbR}  x^\top \widehat{Q} x\, \eta_{\frac{1}{n}} (t-s)ds = x^\top 	\widehat{Q} x.
	\end{align*}
	Therefore\vspace{-3mm}
	\begin{equation*}
	\delta I \leq\cQ_n(t,s) \leq \widehat{Q},\q 0\leq t  \leq s \leq T.
	\end{equation*}
	Consequently, by the dominated convergence theorem, we have
\begin{equation*}
	\lim_{n\to \infty}\int_{0}^{T} \sup_{t\in [0,s]}
	\big|  \cQ_n(t,s)  -  \cQ(t,s) \big|^2
	ds=0.
\end{equation*}
\end{proof}


\subsection{An auxiliary result}


In this subsection, we establish a useful auxiliary result.

For any $\Theta(\cd)\in C([0,T];\dbR^{k\times n})$, the functions $\{P_1(t,t);t\in [0,T]\}$ and $\{P_2(t);t\in [0,T]\}$ are uniquely determined through the following system of equations:
\begin{equation}\label{theta-P1P2}
\begin{cases} \ns \ds
\Phi(t,s)=I+\int_{t}^{s} A_{\Th}\Phi(t,r)dr+\int_{t}^{s}C_{\Theta}\Phi(t,r)dW(r),  &\! \!\!\!\!\!\!\!\!\!\!\!0\leq t \leq s\leq  T,\\\ns \ds
P_2(t)=H+\int_{t}^{T}\big(  P_2   A_\Theta   +  \widehat{A}_\Theta     +\widehat{C}    P_2    +\widehat{D}    P_2   C_\Theta    \big)ds,  & 0\!\leq \!t\! \leq\! T,\\ \ns \ds
{P}_1(t,t)=\mE \Big[\Phi(t,T)^\top G_1(t)\Phi(t,T)+ \int_{t}^{T} \Phi(t,s)^\top \big( Q(t,s)+\Theta   ^\top R(t,s)\Theta   \\ \ns\ds \qq\qq\qq
+ P_2   ^\top M(t,s) P_2       + C_\Theta    ^\top  {P}_2   ^\top N(t,s)  {P}_2    C_\Theta      \big)   \Phi(t,s)ds   \Big],  &0\leq t \leq T,
\end{cases}
\end{equation}
where $A_{\Theta},C_{\Theta},\widehat{A}_{\Theta}$
are defined in \eqref{Atheata} for the given  $\Theta(\cd)$. The solutions possess the following important properties:
\begin{lemma}\label{lm3.4}
Under Assumptions \ref{H1}--\ref{H2}, for any given $\Theta(\cd)\in C([0,T];\dbR^{k\times n})$,
the functions $\{P_1(t,t);t\in[0,T]\}$ and $\{P_2(t);t\in [0,T]\}$ defined by \eqref{theta-P1P2} are continuous,
and $P_1(t,t)$ is positive semi-definite for any $t\in [0,T]$. Moreover, we have
\begin{align}
&  \sup_{t\in[0,T]}\mE \Big[\sup_{s\in[t,T]}|\Phi(t,s)|^2 \Big]\leq \cC(\Theta),\label{lm3.1-eq1}\\
&   |P_2(t)-P_2(r)|\leq \cC(\Theta) |t-r|,\q \forall 0\leq t\leq r \leq T,\label{lm3.1-eq2}
\end{align}
and
\begin{align}
|P_1(t,t)-P_1(r,r)|&\leq\cC(\Th)\(|t-r|^{1\over 2}+|G_1(t)-G_1(r)|\)\nonumber\\
&   \q+\cC(\Th)\int_r^T \Big(|Q(t,s)-Q(r,s)|+|R(t,s)-R(r,s)|\label{lm3.1-eq3} \\
& \qq\qq\qq + |M(t,s)-M(r,s)|+|N(t,s)-N(r,s)|	\Big)ds, \q\forall 0\leq t\leq r \leq T,\nonumber
\end{align}
where $\cC(\Theta)$ is a constant only depending  on $\| \Theta(\cd) \|_{C([0,T];\dbR^{k\times n})}$ and system parameters.
\end{lemma}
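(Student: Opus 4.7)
The plan is to prove the three quantitative estimates in turn; the H\"older-type estimate \eqref{lm3.1-eq3} for $P_1(t,t)$ is the main obstacle, and the continuity and positive semi-definiteness claims follow as corollaries. For \eqref{lm3.1-eq1}, since $\Theta(\cd)$ is continuous (hence bounded) and the other coefficients are bounded by Assumption \ref{H1}, the linear SDE for $\Phi(t,\cd)$ has uniformly bounded drift and diffusion; applying It\^o's formula to $|\Phi(t,s)|^2$, the Burkholder-Davis-Gundy inequality for the martingale part, and Gronwall's inequality yields the stated second-moment bound uniformly in $t$. For \eqref{lm3.1-eq2}, the linear matrix ODE obeyed by $P_2(\cd)$ has bounded coefficients, so a Gronwall argument applied to its backward integral form gives $\sup_{t\in[0,T]}|P_2(t)|\leq \cC(\Theta)$; the Lipschitz bound then follows by integrating the right-hand side over $[t,r]$ and invoking this uniform bound.

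The real work is in \eqref{lm3.1-eq3}. For $t\leq r$ I would split $P_1(t,t)-P_1(r,r)$ into three groups: (a) the contributions of $G_1(t)-G_1(r)$ and of $Q(t,s)-Q(r,s)$, $R(t,s)-R(r,s)$, $M(t,s)-M(r,s)$, $N(t,s)-N(r,s)$, which produce directly the corresponding moduli on the right-hand side of \eqref{lm3.1-eq3}; (b) the extra integral over $[t,r]$, which is $O(|t-r|)$ thanks to \eqref{lm3.1-eq1} and the boundedness of the weights; and (c) the replacement of $\Phi(t,\cd)$ by $\Phi(r,\cd)$ in the boundary term and in all integrands on $[r,T]$. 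Group (c) is the technical heart of the argument.

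For (c), the crucial observation is that since $A_\Theta$ and $C_\Theta$ are deterministic, the fundamental solution $\Phi$ satisfies the cocycle identity $\Phi(t,s)=\Phi(r,s)\Phi(t,r)$ for $t\leq r\leq s$: both sides solve the same linear matrix SDE on $[r,T]$ with initial datum $\Phi(t,r)$ at $r$, so uniqueness forces equality. Consequently $\Phi(t,s)-\Phi(r,s)=\Phi(r,s)[\Phi(t,r)-I]$. Since $\Phi(t,r)$ is $\cF_r$-measurable while $\Phi(r,s)$ depends only on the Brownian increments on $[r,s]$, the two factors are independent; combining this with the standard small-time bound $\mE|\Phi(t,r)-I|^2\leq \cC(\Theta)|t-r|$ (from integrating the defining SDE on $[t,r]$ and applying BDG) yields $\mE|\Phi(t,s)-\Phi(r,s)|^2\leq \cC(\Theta)|t-r|$ uniformly in $s\in[r,T]$. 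A Cauchy-Schwarz step against $\sup_s\mE|\Phi(\cd,s)|^2\leq \cC(\Theta)$ converts this second-moment estimate into the $|t-r|^{1/2}$ factor appearing in \eqref{lm3.1-eq3}.

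Finally, positive semi-definiteness of $P_1(t,t)$ is immediate from Assumption \ref{H2}: both $\Phi(t,T)^\top G_1(t)\Phi(t,T)$ and each summand inside the $ds$-integral is of the form $\Psi^\top K\Psi$ with $K\geq 0$, hence a.s.\ positive semi-definite, so the same holds in expectation. Continuity of $P_1(t,t)$ and $P_2(t)$ on $[0,T]$ then follows from \eqref{lm3.1-eq2}--\eqref{lm3.1-eq3} together with the continuity of the data in Assumption \ref{H1}.
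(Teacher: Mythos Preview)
Your proposal is correct and follows essentially the same architecture as the paper: the same three estimates in the same order, the same three-way decomposition of $P_1(t,t)-P_1(r,r)$, and the same Cauchy--Schwarz step to convert the second-moment bound on $\Phi(t,\cdot)-\Phi(r,\cdot)$ into the $|t-r|^{1/2}$ factor. The one place where you diverge is in deriving $\mE|\Phi(t,s)-\Phi(r,s)|^2\leq \cC(\Theta)|t-r|$: the paper subtracts the two SDEs, bounds $\mE|\Phi(t,r)-I|^2$ by $\cC(\Theta)(r-t)$, and then runs Gronwall in $s$ on $[r,T]$; you instead invoke the cocycle identity $\Phi(t,s)=\Phi(r,s)\Phi(t,r)$ and the independence of $\Phi(r,s)$ from $\cF_r$ (valid here because $A_\Theta,C_\Theta$ are deterministic) to factor the expectation. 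Your route is slightly slicker---it avoids one Gronwall---but is special to deterministic coefficients, whereas the paper's subtraction-plus-Gronwall argument would survive if $\Theta$ were merely bounded and adapted (cf.\ Remark~\ref{rm3.1}).
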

\begin{proof}
From Assumption \ref{H2}, the positive semi-definiteness of $P_1(t,t)$ follows immediately. The continuity of $P_2(\cd)$ is established through the Lipschitz estimate:
\begin{equation}\label{est-P2}
|P_2(t)-P_2(r)|\leq \cC(\Theta) |t-r|,\q \forall 0\leq t\leq r \leq T,
\end{equation}
where $\cC(\Theta)$ depends on $\| \Theta(\cd) \|_{C([0,T];\dbR^{k\times n})}$ and system parameters.

Next we prove that $\{P_1(t,t);t \in [0,T]\}$ is continuous. We proceed as follows. First, standard SDE estimates (e.g.,\cite[Section 3.1]{Lu-Zhang-2021}) yield
\begin{equation}\label{est-The}
\sup_{t\in[0,T]}\mE \Big[\sup_{s\in[t,T]}|\Phi(t,s)|^2 \Big]\leq \cC(\Theta).
\end{equation}
Secondly, for $0\leq t\leq r\leq s \leq T$, we have
\begin{align*}
&	\mE \big[ | \Phi(t,s)-\Phi(r,s)|^2 \big]\\
&\leq\!  2  \mE \[ \big| \Phi(t,r)\!-\!\Phi(r,r)\big|^2\!\!
+\!\Big|\!\int_r^{s}\! A_{\Theta}(\t)\big( \Phi(t,\t)\!-\!\Phi(r,\t) \big)d\t \Big|^2\!\! +\Big|\!\int_r^{s}\! C_{\Theta}(\t)\big( \Phi(t,\t)\!-\!\Phi(r,\tau) \big)dW(\t) \Big|^2\]\\
&\leq2\mE\[2\Big|\int_{t}^rA_{\Theta}(\t)\Phi(t,\t)d\t\Big|^2
+2\Big|\int_{t}^r  C_{\Theta}(\t)\Phi(t,\t)dW(\t) \Big|^2 +\Big|\int_r^{s} A_{\Theta}(\t)\big( \Phi(t,\t)-\Phi(r,\t) \big)d\t \Big|^2 \\
&\qq +\Big|\int_r^{s} C_{\Theta}(\t)\big( \Phi(t,\t)-\Phi(r,\tau) \big)dW(\t) \Big|^2\]\\
&\leq \cC(\Th) (r-t)+ \cC(\Th) \int_r^{s} \mE\big[| \Phi(t,\t)-\Phi(r,\t)|^2 \big]d\t.
\end{align*}
Gronwall's inequality then gives
\begin{equation}\label{est-Phi}
\sup_{r \leq s \leq T}\mE \big[ | \Phi(t,s)-\Phi(r,s) |^2\big]\leq \cC(\Th)|t-r|.
\end{equation}
We  now  estimate $|P_1(t,t)-P_1(r,r)|$ for   $0\leq t\leq r \leq T$:
\begin{align*}
&|P_1(t,t)-P_1(r,r)|\\&\leq \cC(\Th) \mE \big[ | \Phi(t,T)-\Phi(r,T) |^2\big]^{1\over 2}
\sup_{s\in[0,T]}\mE\big[|\Phi(s,T)|^2\big]^{1\over 2}\\
&\q+\cC(\Th)|G_1(t)-G_1(r)|\sup_{s\in[0,T]}\mE\big[|\Phi(s,T)|^2\big] +\cC(\Th)\int_t^r \dbE\big[| \Phi(t,s) |^2\big]ds\\
&\q+\cC(\Th)\int_r^T \dbE\big[| \Phi(t,s)-\Phi(r,s) |^2\big]^{1\over 2}ds\sup_{t\in[0,T]}\mE\[\sup_{s\in[t,T]}|\Phi(t,s)|^2\]^{1\over 2}\\
&\q+\cC(\Th)\int_r^T \Big(|Q(t,s)-Q(r,s)|+|R(t,s)-R(r,s)|\\
&\qq \qq \qq \q  + |M(t,s)-M(r,s)|+|N(t,s)-N(r,s)| \Big)ds
\sup_{t\in[0,T]}\mE\[\sup_{s\in[t,T]}|\Phi(t,s)|^2\].
\end{align*}
This, together with \eqref{est-The}--\eqref{est-Phi}, implies that
\begin{align*}
&	|P_1(t,t)-P_1(r,r)|\\ &\leq\cC(\Th)\Big(|t-r|^{1\over 2}+|G_1(t)-G_1(r)|\Big)\nonumber\\
&\q+\cC(\Th)\int_r^T \Big(|Q(t,s)-Q(r,s)|+|R(t,s)-R(r,s)| + |M(t,s)-M(r,s)|+|N(t,s)-N(r,s)|	\Big)ds.
\end{align*}
This completes the proof.
\end{proof}
\begin{remark}\rm \label{rm3.1}
It is clear that the above result still holds for  $\Theta(\cd)\in L^\infty(0,T;\dbR^{k\times n})$.
\end{remark}

\subsection{Integral equation systems and equilibrium value function}

In this subsection, we demonstrate that the solvability of ERE \eqref{ERE} is equivalent to that of an integral equation system (IES, for short).
Based on this equivalence, we derive the equilibrium value function for Problem (TI-FBSLQ).

We first introduce the following IES:
\begin{equation}\label{Inte}
\begin{cases} \ns \ds
\Phi(t,s)=I+\int_{t}^{s} A_{\Th}\Phi(t,r)dr+\int_{t}^{s}C_{\Theta}\Phi(t,r)dW(r),  &\! \!\!\!\!\!\!\!\!\!\!\!0\leq t \leq s\leq  T,\\\ns \ds
P_2(t)=H+\int_{t}^{T}\big(  P_2   A_\Theta   +  \widehat{A}_\Theta     +\widehat{C}    P_2    +\widehat{D}    P_2   C_\Theta    \big)ds,  & 0\!\leq \!t\! \leq\! T,\\ \ns \ds
{P}_1(t,t)=\mE \Big[\Phi(t,T)^\top G_1(t)\Phi(t,T)+ \int_{t}^{T} \Phi(t,s)^\top \big( Q(t,s)+\Theta   ^\top R(t,s)\Theta   \\ \ns\ds \qq\qq\qq
+ P_2   ^\top M(t,s) P_2       + C_\Theta    ^\top  {P}_2   ^\top N(t,s)  {P}_2    C_\Theta      \big)   \Phi(t,s)ds   \Big],  &0\leq t \leq T,
\end{cases}
\end{equation}
where
\begin{align}\label{Inte-theta}
\nonumber	\Theta(s)=& -\big[R(s,s) +  D(s)^\top \!\big( {P}_1(s,s) +  P_2(s)^\top N(s,s) P_2(s) \big)D(s)\big]^{-1} \\  \ns \ds &
\times  \big[ B(s)^\top {P}_1(s,s)     +  D(s)^\top\big({P}_1(s,s)  +  P_2(s)^\top N(s,s) P_2(s) \big)C(s)   \nonumber\\  \ns \ds  &\q
+ \big(\widehat{B}(s)^\top\! +  {B}(s)^\top  P_{2}(s)^\top   +  D(s)^\top  P_{2}(s)^\top   \widehat{D}(s)^\top\big ) G_2(s) P_{2}(s) \big].
\end{align}

We have the following result.

\begin{lemma}\label{lm3.5}
Under Assumptions \ref{H1}--\ref{H2}, the following statements are equivalent:

(i) The  ERE  \eqref{ERE} admits a unique solution $(P_1(\cd,\cd),P_2(\cd))\in C(\Delta^*[0,T];\dbS^n_+)\times C([0,T];\dbR^{m \times n})$;

(ii) The  IES  \eqref{Inte} admits a unique solution $(\{P_1(t,t);t\in [0,T]\},P_2(\cd))\in  C([0,T];\dbS^n_+) \times C([0,T];\dbR^{m \times n})$.

Furthermore, both systems share identical  functions $\Theta(\cd)$.
\end{lemma}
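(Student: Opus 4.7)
The plan is to base the proof entirely on Lemma \ref{lm3.2}, which provides a deterministic representation of the solution to a linear backward matrix ODE with quadratic source in terms of the fundamental solution $\Phi$ of an auxiliary linear SDE. Observe that the formulas \eqref{Theta} and \eqref{Inte-theta} for $\Theta(\cd)$ are literally identical, requiring only the diagonal values $P_1(s,s)$ and $P_2(s)$; hence once we identify these quantities, the two feedback maps coincide automatically, so the final assertion comes for free.

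For the direction $(i)\Ra(ii)$, assume the ERE \eqref{ERE} admits a solution $(P_1(\cd,\cd),P_2(\cd))$. Fix $t\in[0,T]$ and regard the first ERE ODE as a linear backward matrix ODE in the second variable $s\in[t,T]$ with terminal value $G_1(t)$, source term
\[
\cQ(s)\=Q(t,s)+\Th(s)^\top R(t,s)\Th(s)+P_2(s)^\top M(t,s)P_2(s)+C_\Theta(s)^\top P_2(s)^\top N(t,s)P_2(s)C_\Theta(s),
\]
and coefficients $\cA(s)=A_\Theta(s)$, $\cC(s)=C_\Theta(s)$ (all continuous under \ref{H1} since $\Theta(\cd)$ and $P_2(\cd)$ are continuous). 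Applying Lemma \ref{lm3.2} evaluated at the left endpoint yields exactly the third equation of \eqref{Inte}, while the second equation of \eqref{Inte} is the mere integrated form of the $P_2$-equation in \eqref{ERE}; the first line of \eqref{Inte} is the defining SDE for $\Phi$. Thus $(\{P_1(t,t)\},P_2(\cd))$ solves the IES. Uniqueness is inherited from (i) since $\Theta$ is pinned down by $(P_1(\cd,\cd),P_2(\cd))$.

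For the converse $(ii)\Ra(i)$, given an IES solution with $\Theta(\cd)$ determined by \eqref{Inte-theta}, extend $P_1$ off the diagonal by setting, for each $(t,s)\in\D^*[0,T]$,
\[
P_1(t,s)\=\mE\Big[\Phi(s,T)^\top G_1(t)\Phi(s,T)+\int_s^T\Phi(s,r)^\top\cQ(r)\Phi(s,r)dr\Big],
\]
with $\cQ$ as above. By construction this agrees with the IES value when $s=t$, so $\Theta$ is unchanged. Applying Lemma \ref{lm3.2} to the family $s\mapsto P_1(t,s)$ (with $t$ fixed as a parameter) recovers the first ERE ODE, and $P_1(t,s)\in\dbS^n_+$ follows directly from Assumption \ref{H2}, since $G_1(t)\ge0$ and each summand of $\cQ$ is positive semi-definite. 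The continuity of $P_1(\cd,\cd)$ on $\D^*[0,T]$ and of $P_2(\cd)$ on $[0,T]$ can be verified by the same estimates used in Lemma \ref{lm3.4}, applied now to the extended representation; the $P_2$ part of the ERE is already present in \eqref{Inte} as an integral identity, whose differentiation in $s$ gives the ODE. Uniqueness for the ERE follows since any ERE solution would, by $(i)\Ra(ii)$, produce an IES solution with the same diagonal, forcing the same $\Theta$ and hence the same extended $P_1(t,s)$.

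The only subtle step is keeping the bookkeeping between the two time variables in the extended representation straight: the ODE in $s$ is to be read with $t$ frozen as a parameter, while the feedback $\Theta(s)$ couples across the diagonal and is what makes the ``$t$-fibered'' ODEs genuinely interact. Aside from this, the argument is a direct application of Lemma \ref{lm3.2} together with the regularity already established in Lemma \ref{lm3.4}, and no further tools are needed.
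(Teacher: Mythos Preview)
Your approach is essentially the same as the paper's: both directions hinge on the representation of $P_1(t,\cd)$ as an expectation against the fundamental solution $\Phi(t,\cd)$, and the uniqueness argument via the shared $\Theta(\cd)$ is identical in spirit. The paper, however, applies It\^o's formula directly to $s\mapsto\Phi(t,s)^\top P_1(t,s)\Phi(t,s)$ rather than invoking Lemma~\ref{lm3.2}.

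There is one genuine technical wrinkle in your write-up. You assert that $A_\Theta,C_\Theta,\cQ$ are ``all continuous under \ref{H1}'', but Assumption~\ref{H1} only gives $A,B,C,D,\widehat A,\widehat B,\widehat C,\widehat D\in L^\infty$, not $C([0,T])$; continuity of the state coefficients is Assumption~\ref{H4}, which is \emph{not} in force for Lemma~\ref{lm3.5}. Lemma~\ref{lm3.2} as stated requires $\cA,\cC,\cQ$ continuous, so your invocation is formally out of scope. The paper's direct It\^o computation sidesteps this because it works for bounded measurable coefficients. To repair your argument you can either (a) note that Lemma~\ref{lm3.2} extends to $L^\infty$ coefficients (the proof via It\^o's formula does not actually use continuity), or (b) replace the appeal to Lemma~\ref{lm3.2} with the same It\^o-formula calculation the paper performs. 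Either fix is routine, but as written the regularity bookkeeping is off.
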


\begin{proof}
(i)$\Rightarrow$(ii). From the  positive semi-definite solution $P_1(\cd,\cd)$  and \ref{H2}, we have
$
R(s,s)+D(s)^\top P_1(s,s)D(s)\\\geq \d I,
$
for $ s\in[0,T]$. Consequently, the function $\Th(\cd)$ defined in \eqref{Theta} remains uniformly bounded.  Substituting this into the first equation of \eqref{Inte}, we obtain for each fixed $t\in[0,T)$ a unique solution $\Phi(t,\cd)\in L_\dbF^2(\Om;\,C([t,T];\,\dbR^{n \times n   }))$.
Applying  It\^{o}'s formula to the matrix-valued process $s\mapsto\Phi(t,s)^\top  P_1(t,s)\Phi(t,s)$ over $[t,T]$ and taking expectations yields
\begin{align*}
&{P}_1(t,t)- \mE \Phi(t,T)^\top G(t)\Phi(t,T)\\
& =\mE \Big[\int_{t}^{T}\Phi(t,s)^\top \big( Q(t,s)+\Theta(s)^\top R(t,s)\Theta(s) +P_2(s)^\top M(t,s)P_2(s)\\
&\qq \qq+ C_{\Theta}(s)^\top P_2(s)^\top N(t,s) P_2(s)C_{\Theta}(s) \big)\Phi(t,s)ds\Big].
\end{align*}
This establishes that the diagonal values $\{P_1(t,t);t\in [0,T]\}$ and $P_2(\cd)$  indeed solve the IES \eqref{Inte}.

(ii)$\Rightarrow$(i). Let the coefficients $A_\Th(\cd)$, $C_\Th(\cd)$, and $\Th(\cd)$ be defined by \eqref{Atheata} and \eqref{Inte-theta} respectively. Consider the coupled system:
\begin{equation}\label{lm3.4-pr-eq1}
\begin{cases}\ds
\frac{\wt P_1(t,s)}{ds} + \wt P_{1}(t,s) A_\Theta  +A_\Theta^\top  \wt P_1(t,s) +C_\Theta^\top \wt P_{1}(t,s)C_\Theta  +Q(t,s) \\\ns\ds\q
+\Th^\top R(t,s)\Th + \wt P_2^\top M(t,s)\wt P_2  + C_\Theta ^\top \wt P_2^\top N(t,s) \wt P_2 C_\Theta=0, & 0\leq t\leq s\leq T,\\\ns\ds
\frac{d\wt P_2}{ds}+ \wt P_2A_\Theta+  \widehat{A}_\Theta  +\widehat{C} \wt P_2 +\widehat{D} \wt P_2C_\Theta =0, & 0\leq t\leq s\leq T,\\\ns\ds
\wt P_1(t,T)=G_1(t),	\q \wt P_2(T)=H, & 0\leq t\leq T,
\end{cases}
\end{equation}
and the associated SDE
\begin{equation}\label{lm3.4-pr-eq2}
\wt \Phi(t,s)=I+\int_{t}^{s} A_{\Th}(r)\wt \Phi(t,r)dr+\int_{t}^{s}C_{\Theta}(r)\wt \Phi(t,r)dW(r),  \q 0\leq t \leq s\leq  T.
\end{equation}
By standard  results for ODEs and SDEs, the systems \eqref{lm3.4-pr-eq1} and \eqref{lm3.4-pr-eq2} admit unique solutions $(\wt P_1(\cd,\cd), \wt P_2(\cd))$, and $\wt\Phi(\cd,\cd)$, respectively.

Since $\Phi(\cd,\cd)$ and $\wt \Phi(\cd,\cd)$ solve the same linear SDE, and $ P_2(\cd)$ and $\wt P_2(\cd)$ solve the same ODE,
we have $\Phi(\cd,\cd)=\wt \Phi(\cd,\cd)$ and $ P_2(\cd)=\wt P_2(\cd)$.
Applying  It\^{o}'s formula to $s\mapsto\wt\Phi(t,s)^\top \wt P_1(t,s) \wt \Phi(t,s)$ over $[t,T]$ gives
\begin{align}
\wt {P}_1(t,t)&= \mE \[ \wt \Phi(t,T)^\top G(t) \wt \Phi(t,T)+\int_{t}^{T} \wt \Phi(t,s)^\top \big(  Q(t,s)+\Theta(s)^\top R(t,s)\Theta(s) \nonumber   \\
&\qq +\wt P_2(s)^\top M(t,s)\wt P_2(s)  + C_{\Theta}(s)^\top \wt P_2(s)^\top N(t,s) \wt P_2(s)C_{\Theta}(s) \big) \wt \Phi(t,s)ds \]\nonumber\\
& =P_1(t,t),\q t\in[0,T].\label{lm3.4-pr-eq3}
\end{align}
Consequently,  the function $\Theta(\cd)$ in  \eqref{Inte-theta}  admits the representation
\begin{align}
\Theta(s)=& -\big[R(s,s) +  D(s)^\top \!\big( \wt {P}_1(s,s) +  \wt P_2(s)^\top N(s,s)  \wt P_2(s) \big)D(s)\big]^{-1} \nonumber\\  \ns \ds &
\times \big[ B(s)^\top  \wt {P}_1(s,s)     +  D(s)^\top\big( \wt {P}_1(s,s)  +   \wt P_2(s)^\top N(s,s) \wt P_2(s) \big)C(s)   \nonumber\\  \ns \ds  &\q
+ \big(\widehat{B}(s)^\top +  {B}(s)^\top  \wt P_{2}(s)^\top   +  D(s)^\top  \wt P_{2}(s)^\top   \widehat{D}(s)^\top\big ) G_2(s) \wt P_{2}(s) \big].  \label{lm3.4-pr-eq4}
\end{align}
From \eqref{lm3.4-pr-eq1}--\eqref{lm3.4-pr-eq4}, we can see that $ (\wt P_1(\cd,\cd),\wt P_2(\cd))$ is a solution to the ERE \eqref{ERE}.

Moreover, from the proof above, we can get that ERE \eqref{ERE} and IES \eqref{Inte} share the same $\Theta(\cd)$.
Since $\Theta(\cd)$ uniquely determines the solutions of ERE \eqref{ERE} and IES \eqref{Inte},
we conclude that the uniqueness of the solution of ERE \eqref{ERE}  is equivalent to that of IES \eqref{Inte}.
\end{proof}

Next, we employ Lemma \ref{lm3.5} to derive the equilibrium value function for Problem (TI-FBSLQ).

\begin{lemma}\label{lm3.6}
Under Assumptions \ref{H1}--\ref{H2}, suppose the  ERE \eqref{ERE}  admits a solution $(P_1(\cd,\cd),P_2(\cd))\in C(\Delta^*[0,T];\dbS^n_+)\times C([0,T];\dbR^{m \times n})$.
Then the equilibrium value function $\dbV(\cd,\cd)$ admits the explicit quadratic form:
\begin{equation}
\dbV(t,x)= \frac{1}{2}\big\langle \big(P_1(t,t)+ P_2(t)^\top G_2(t) P_2(t) \big) x ,x \big\rangle,\q \forall (t,x)\in [0,T]\times L_{\cF_t}^2(\Omega;\dbR^n).
\end{equation}
\end{lemma}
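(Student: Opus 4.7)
The plan is to compute $\dbV(t,x)$ directly as the cost $\cJ(t,x;\bar{u}(\cd))$ incurred along the equilibrium trajectory $(\bar X,\bar Y,\bar Z)$ from \eqref{de1-eq2}, where $\bar{u}(s)=\Theta(s)\bar{X}(s)$ and $\Theta(\cd)$ is defined by \eqref{Theta}. This is well-posed because Assumption \ref{H2} together with $P_1(s,s)\geq 0$ (part of the hypothesis) gives $R(s,s)+D(s)^\top\bigl(P_1(s,s)+P_2(s)^\top N(s,s)P_2(s)\bigr)D(s)\geq \delta I$, so $\Theta(\cd)$ is bounded; by Proposition \ref{Th1} it is the closed-loop equilibrium strategy, and the equilibrium value is, by definition, the cost achieved by this strategy.

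The first analytic step is to obtain a closed-form representation of the backward components in terms of $\bar X$. Applying It\^o's formula to the process $s\mapsto P_2(s)\bar{X}(s)$ on $[t,T]$ and using the second equation of the ERE \eqref{ERE}, namely
\begin{equation*}
\dot{P}_2+P_2 A_\Theta+\widehat{A}_\Theta+\widehat{C}P_2+\widehat{D}P_2 C_\Theta=0,
\end{equation*}
one verifies that the pair $\bigl(P_2(\cd)\bar{X}(\cd),\,P_2(\cd)C_\Theta(\cd)\bar{X}(\cd)\bigr)$ satisfies the same BSDE as $(\bar Y,\bar Z)$ in \eqref{de1-eq2}, with matching terminal value $P_2(T)\bar{X}(T)=H\bar{X}(T)$. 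Uniqueness of the linear BSDE then gives $\bar{Y}(s)=P_2(s)\bar{X}(s)$ and $\bar{Z}(s)=P_2(s)C_\Theta(s)\bar{X}(s)$.

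Substituting these identities together with $\bar{u}(s)=\Theta(s)\bar{X}(s)$ into \eqref{TI-cost}, the running integrand becomes $\langle(Q(t,s)+\Theta^\top R(t,s)\Theta+P_2^\top M(t,s)P_2+C_\Theta^\top P_2^\top N(t,s)P_2 C_\Theta)\bar{X}(s),\bar{X}(s)\rangle$, and the two terminal terms reduce to $\langle G_1(t)\bar{X}(T),\bar{X}(T)\rangle$ and $\langle P_2(t)^\top G_2(t)P_2(t)x,x\rangle$, where the second uses $\bar Y(t)=P_2(t)\bar X(t)=P_2(t)x$ (this is exactly where the extra $P_2(t)^\top G_2(t)P_2(t)$ contribution comes from, due to the nonstandard feature of \eqref{TI-cost} that it penalizes $\bar Y$ at the initial time $t$ rather than at $T$). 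Since $\bar X(\cd)$ depends linearly on $x$ through the fundamental matrix $\Phi(t,\cd)$ of the first equation of \eqref{Inte}, which is independent of $\cF_t$, conditioning on $\cF_t$ converts the quadratic weights into unconditional expectations of the form $\mE[\Phi(t,s)^\top(\,\cd\,)\Phi(t,s)]$. The third equation of \eqref{Inte}, guaranteed by Lemma \ref{lm3.5}, then identifies the sum of the integral contribution and the $G_1(t)$-terminal contribution as $\langle P_1(t,t)x,x\rangle$, yielding $\cJ(t,x;\bar u(\cd))=\tfrac12\langle(P_1(t,t)+P_2(t)^\top G_2(t)P_2(t))x,x\rangle$ as required.

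The only genuinely non-routine ingredient is the BSDE representation $\bar Y=P_2\bar X$, $\bar Z=P_2 C_\Theta\bar X$; but this is in fact transparent once one notices that the $P_2$-equation of the ERE is precisely the ODE that forces the It\^o drift of $P_2\bar X$ to match the BSDE generator evaluated at that candidate. Every remaining step is algebraic substitution plus an appeal to Lemma \ref{lm3.5}, so no serious obstacle arises; the care needed is merely in tracking the conditional expectation and the two-variable weighting matrices $Q(t,\cd),R(t,\cd),M(t,\cd),N(t,\cd)$ that feed into the diagonal value $P_1(t,t)$.
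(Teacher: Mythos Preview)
Your proposal is correct and follows essentially the same approach as the paper's own proof: represent $(\bar Y,\bar Z)$ as $(P_2\bar X,\,P_2 C_\Theta\bar X)$, write $\bar X(\cd)=\Phi(t,\cd)x$, substitute into the cost functional, and invoke Lemma \ref{lm3.5} to identify the integral and $G_1(t)$-terminal contributions with $\langle P_1(t,t)x,x\rangle$. If anything, you are slightly more careful than the paper in justifying $\bar Y=P_2\bar X$ via It\^o's formula and BSDE uniqueness, which the paper simply asserts.
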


\begin{proof}
Substituting the closed-loop equilibrium strategy $\Theta(\cd)$ from \eqref{Theta} into the state equation \eqref{state}, we denote the corresponding solution by $(\bar{X}(\cd),\bar{Y}(\cd))$. Direct computation yields
\begin{align*}
\bar{Y}(\cd)= P_2(\cd) \bar{X}(\cd),\q \bar{Z}(\cd)= P_2(\cd) C_{\Theta}(\cd) \bar{X}(\cd),
\end{align*}
where $P_2(\cd)$ solves \eqref{ERE}. Noting that $\bar{X}(\cd)=\Phi(t,\cd)x$ with $\Phi(t,\cd)$ solving \eqref{Inte}, Lemma \ref{lm3.5} gives
\begin{eqnarray*}
\dbV(t,x)&\3n=&\3n\cJ(t,x;\Theta(\cd)\bar{X}(\cd))\\
&\3n=&\3n\frac{1}{2}\mE_t\Big[ \int_{t}^{T} \big( \big \langle Q(t,s)\bar{X},\bar{X}  \big\rangle +  \big\langle R(t,s)\Theta\bar{X},\Theta\bar{X} \big \rangle + \langle M(t,s)P_2 \bar{X},P_2 \bar{X} \rangle \\
&& \qq \q +\langle N(t,s)P_2 C_{\Theta} \bar{X},P_2 C_{\Theta} \bar{X} \rangle \big)ds+  \langle G_1(t) \bar{X}(T) ,\bar{X}(T) \rangle + \langle \bar{Y}(t),\bar{Y}(t)  \rangle \Big]\\
&\3n=&\3n \frac{1}{2} \mE_t \Big[ \big\langle \Phi(t,T)^\top G_1(t)\Phi(t,T)x,x \big \rangle + \int_{t}^{T} \big\langle\Phi(t,s)^\top \big( Q(t,s)+\Theta   ^\top R(t,s)\Theta   \\ \ns\ds \qq\qq\qq
&& \qq \q + P_2   ^\top M(t,s) P_2       + C_\Theta    ^\top  {P}_2   ^\top N(t,s)  {P}_2    C_\Theta      \big)   \Phi(t,s) x,x \big\rangle ds   +  \big\langle  P_2(t)^\top G_2(t)P_2(t)x,x \big\rangle   \Big] 		\\
&\3n=&\3n \frac{1}{2}\big\langle \big( P_1(t,t)+ P_2(t)^\top G_2(t) P_2(t) \big) x ,x \big\rangle.
\end{eqnarray*}
This completes the proof.
\end{proof}

\section{Priori estimates for the ERE \eqref{ERE} with smooth coefficients}\label{sec4}

In this section, we derive a priori estimates for solution to the  ERE  \eqref{ERE} under smooth coefficient assumptions. The following technical conditions will be imposed:
\begin{taggedassumption}{(H4)}\label{H4}
The coefficients  satisfy:
\begin{align}
&A(\cd),C(\cd) \in C([0,T];\dbR^{n\times n}),\q B(\cd),D(\cd) \in C([0,T];\dbR^{n\times k}), \nonumber\\&
\widehat{C}(\cd),\widehat{D}(\cd)\in C([0,T];\dbR^{m\times m}),\q \widehat{A}(\cd)\in C([0,T];\dbR^{m\times n}),\q \widehat{B}(\cd)\in C([0,T];\dbR^{m\times k}). \label{H4-coe}
\end{align}
\end{taggedassumption}
\begin{taggedassumption}{(H5)}\label{H5}
The weighting matrices satisfy:
\begin{align}
&Q(\cd,\cd)\in C([0,T]^2;\dbS^n),\q R(\cd,\cd)\in C([0,T]^2;\dbS^k),\q M(\cd,\cd),N(\cd,\cd)\in C([0,T]^2;\dbS^m),\nonumber \\&
G_1(\cd)\in C^1([0,T];\dbS^n),\q G_2(\cd)\in C^1([0,T];\dbS^m), \text{ and    }\q  Q(t,s),R(t,s),N(t,s),M(t,s) \nonumber\\&
\text{ are differentiable for t over } \{(t,s)\,|\, 0\leq t \leq s \leq T \}. \label{H5-coe}
\end{align}
\end{taggedassumption}
\begin{proposition}\label{pri-est-Pi}
Under Assumptions \ref{H2}--\ref{H5}, suppose the  ERE \eqref{ERE}  admits a solution $(P_1(\cd,\cd),P_2(\cd))\in C(\Delta^*[0,T];\dbS^n_+)\times C([0,T];\dbR^{m \times n})$.
Then we have
\begin{equation}
\sup_{t\in[0,T]}\big|P_1(t,t)+P_2(t)^\top G_2(t)P_2(t) \big|\leq \cC^*_{\dbV},
\end{equation}
where $\cC^*_{\dbV}$ is defined in \eqref{def-CV}.
\end{proposition}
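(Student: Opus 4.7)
The approach exploits the identification from Lemma \ref{lm3.6}: the quantity $\Pi(t)\=P_1(t,t)+P_2(t)^\top G_2(t)P_2(t)$ is twice the equilibrium value function of Problem (TI-FBSLQ). The plan is to derive a closed scalar differential inequality $|\dot\Pi(t)|\leq\alpha(t)|\Pi(t)|+\beta(t)$ and apply Gronwall backward from $t=T$. The terminal data $\Pi(T)=G_1(T)+H^\top G_2(T)H$ satisfies $|\Pi(T)|\leq\widehat\cG(1+|H|^2)$ by Assumption \ref{H3}, which supplies the prefactor of $\cC^*_\dbV$.

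Differentiating yields $\dot\Pi(t)=\partial_t P_1(t,s)|_{s=t}+\partial_s P_1(t,s)|_{s=t}+\dot P_2^\top G_2 P_2+P_2^\top\dot G_2 P_2+P_2^\top G_2\dot P_2$, in which $\partial_s P_1|_{s=t}$ and $\dot P_2$ come directly from the ERE \eqref{ERE}. The crucial observation is that $\partial_t P_1(t,s)|_{s=t}\geq 0$ and $P_2^\top\dot G_2 P_2\geq 0$. For the first claim, Lemma \ref{lm3.5} provides the representation $P_1(t,s)=\dbE[\Phi(s,T)^\top G_1(t)\Phi(s,T)+\int_s^T\Phi(s,r)^\top\cQ(t,r)\Phi(s,r)dr]$ (with $\cQ(t,r)$ collecting the $Q,R,M,N$ weighting pieces), in which $\Theta,P_2,\Phi(s,\cd)$ are independent of $t$. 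Differentiation in $t$ transfers the derivative onto $G_1(t)$ and the weighting matrices, and under Assumption \ref{H3} combined with the smoothness \ref{H5}, Lemma \ref{lm3.1} guarantees $\dot G_1,\partial_t Q,\partial_t R,\partial_t M,\partial_t N\geq 0$, so the derivative is a positive-semidefinite-valued expectation. Similarly $\dot G_2(t)\geq 0$. Dropping these nonnegative contributions and substituting the ERE equations gives a matrix inequality for $-\dot\Pi$ involving $A_\Th,C_\Th,\widehat A_\Th$ and the weighting matrices at $(t,t)$.

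The central algebraic step is to expand $A_\Th=A+B\Theta$, $C_\Th=C+D\Theta$, $\widehat A_\Th=\widehat A+\widehat B\Theta$ and regroup all $\Theta$-dependent terms. A direct computation shows they assemble into the quadratic form $\Theta^\top\cR\Theta+\Theta^\top\cS+\cS^\top\Theta$, with $\cR\=R(t,t)+D^\top(P_1+P_2^\top N P_2)D\geq\d I$ (from Assumption \ref{H2}) and $\cS$ coinciding --- via the identity $B^\top\Pi=B^\top P_1+B^\top P_2^\top G_2 P_2$ --- with the matrix in the definition \eqref{Theta} of $\Theta$. Since $\Theta=-\cR^{-1}\cS$, completing the square yields the cancellation $\Theta^\top\cR\Theta+\Theta^\top\cS+\cS^\top\Theta=-\cS^\top\cR^{-1}\cS\leq 0$, which we drop. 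Taking spectral norms in the resulting $\Theta$-free inequality and invoking $|P_1|\leq|\Pi|$ (from $0\leq P_1\leq\Pi$), $|P_2|^2\leq\frac{n}{\d}|\Pi|$ (from $\d P_2^\top P_2\leq P_2^\top G_2 P_2\leq\Pi$ together with $|\cd|\leq|\cd|_{\rm Tr}\leq n|\cd|$ for $n\times n$ positive semi-definite matrices), $|G_2|\leq\widehat\cG$, the uniform weighting-matrix bounds from \ref{H3}, and the Young splitting $2|\widehat A|\widehat\cG|P_2|\leq\widehat\cG^2|\widehat A|^2+|P_2|^2$ on the $P_2$-affine cross term $\widehat A^\top G_2 P_2+P_2^\top G_2\widehat A$, produces the scalar inequality $|\dot\Pi|\leq\alpha(t)|\Pi|+\beta(t)$ with $\alpha,\beta$ matching exactly the exponent and integrand of $\cC^*_\dbV$. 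Gronwall's inequality then delivers the claimed bound.

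The principal difficulty is the algebraic identification in the completion-of-squares step: one must verify that the $\Theta$-linear coefficient arising from differentiating the \emph{combined} quantity $\Pi=P_1(t,t)+P_2^\top G_2 P_2$ is exactly $\cS^\top$ from \eqref{Theta}, reflecting the specific feedback form chosen for $\Theta$. This exact matching is what permits the inherently non-symmetric $\Theta$-nonlinearity to cancel; without it, any crude bound $|\Theta|\lesssim|\Pi|$ would introduce a quadratic-in-$|\Pi|$ contribution that would destroy the Gronwall argument. A secondary, routine technicality --- justifying termwise $t$-differentiation of the representation formula for $P_1(t,s)$ --- is supplied by the smoothness assumption \ref{H5} together with the regularity estimates in Lemma \ref{lm3.4}.
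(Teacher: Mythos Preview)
Your proposal is correct and follows essentially the same route as the paper: define $\Pi(t)=P_1(t,t)+P_2(t)^\top G_2(t)P_2(t)$, differentiate it by splitting $\frac{d}{dt}P_1(t,t)$ into the $\partial_t$-piece (shown nonnegative via the representation formula and the monotonicity in \ref{H3}) and the $\partial_s$-piece (read off from the ERE), drop the nonnegative contributions $\partial_t P_1|_{s=t}$ and $P_2^\top\dot G_2 P_2$, and then exploit the specific choice of $\Theta$ to complete the square and eliminate all $\Theta$-dependence before applying the $|P_1|\leq|\Pi|$, $|P_2|^2\leq\frac{n}{\delta}|\Pi|$ bounds and Gronwall. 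The paper carries out exactly these steps (with the $\partial_t$-piece handled via an explicit difference-quotient argument rather than direct differentiation, but to the same effect), and your identification of the completion-of-squares cancellation as the crux is precisely the paper's key algebraic manipulation.
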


\begin{proof}
Following Lemma \ref{lm3.5}, we focus on establishing the required estimates for the  IES \eqref{Inte}. Let us define the key auxiliary function: \vspace{-3mm}
$$\dbV(t)\=P_1(t,t)+P_2(t)^\top G_2(t)P_2(t).$$
From Lemma \ref{lm3.6}, we recognize that $ \frac{1}{2} \langle \dbV(t)x,x \rangle$ represents the equilibrium value function for Problem (TI-FBSLQ). Under Assumption \ref{H2}, we obtain the following component bounds:\vspace{-2mm}
\begin{align*}
0\leq P_1(t,t)\leq \dbV(t),\q
0\leq \delta P_2(t)^\top P_2(t)\leq \dbV(t),
\end{align*}
which implies\vspace{-2mm}
\begin{equation}\label{pr-eq9}
|P_1(t,t)|\leq |\dbV(t)|,\q |P_2(t)|^2\leq |P_2(t)^\top P_2(t)|_{\rm Tr}\leq \frac{1}{\delta} |\dbV(t)|_{\rm Tr} \leq  \frac{n}{\delta} |\dbV(t)|.
\end{equation}	

Next we take the derivative of $\dbV(t)$.
Firstly, for any fixed  $t,t_0\in [0,T]$, from \eqref{Inte} we have the following decomposition
\begin{equation*}
\frac{P_1(t,t)-P_1(t_0,t_0)}{t-t_0}= \dbI_1+ \dbI_2,
\end{equation*}
where\vspace{-2mm}
{\small
\begin{align*}
\dbI_1&= \frac{1}{t-t_0}\bigg \{\mE \Big[ \Phi(t,T)^\top G_1(t)\Phi(t,T)+ \int_{t}^{T} \Phi(t,s)^\top \big( Q(t,s)+\Theta(s)^\top R(t,s)\Theta(s)\\&\qq \qq
+P_2(s)^\top M(t,s) P_2(s)+ C_{\Theta}(s)^\top P_2(s)^\top N(t,s) P_2(s) C_{\Theta}(s) \big)\Phi(t,s)ds\Big]\\&\qq\qq
-\mE\[\Phi(t,T)^\top G_1(t_0)\Phi(t,T)+\int_{t}^{T} \Phi(t,s)^\top \big(Q(t_0,s)+\Theta(s)^\top R(t_0,s)\Theta(s)\\&\qq\qq\q
+P_2(s)^\top M(t_0,s) P_2(s)+ C_{\Theta}(s)^\top P_2(s)^\top N(t_0,s) P_2(s) C_{\Theta}(s) \big)\Phi(t,s)ds\Big] \bigg\},\\
\dbI_2&=\frac{1}{t-t_0}\bigg \{\mE\[ \Phi(t,T)^\top G_1(t_0)\Phi(t,T)+ \int_{t}^{T} \Phi(t,s)^\top \big( Q(t_0,s)+\Theta(s)^\top R(t_0,s)\Theta(s)\\&\qq\qq
+P_2(s)^\top M(t_0,s) P_2(s)+ C_{\Theta}(s)^\top P_2(s)^\top N(t_0,s) P_2(s) C_{\Theta}(s)\big)\Phi(t,s)ds  \Big]\\&\qq\qq
-\mE\[\Phi(t_0,T)^\top G_1(t_0)\Phi(t_0,T)+ \int_{t_0}^{T} \Phi(t_0,s)^\top \big(Q(t_0,s)+\Theta(s)^\top R(t_0,s)\Theta(s)\\&\qq\qq\q
+P_2(s)^\top M(t_0,s) P_2(s)+ C_{\Theta}(s)^\top P_2(s)^\top N(t_0,s) P_2(s) C_{\Theta}(s) \big)\Phi(t_0,s)ds \] \bigg\}.	
\end{align*}}

For $\dbI_1$, applying the dominated convergence theorem and Lemma \ref{lm3.1} yields\vspace{-2mm}
\begin{align}\label{pr-eq5}
\lim_{t \to t_0} \dbI_1 &= \mE \Big[\Phi(t_0,T)^\top G'(t_0)\Phi(t_0,T)+ \int_{t_0}^{T} \Phi(t_0,s)^\top\big(\partial_t Q(t_0,s)+\Theta(s)^\top\partial_t R(t_0,s)\Theta(s)\nonumber \\&\qq
+P_2(s)^\top \partial_t M(t_0,s) P_2(s)+ C_{\Theta}(s)^\top P_2(s)^\top \partial_t N(t_0,s) P_2(s) C_{\Theta}(s)  \big)\Phi(t_0,s)ds\Big] \geq 0,
\end{align}
for any $ t_0 \in [0,T]$.

We next take the limit for $\dbI_2$.
For any fixed $t_0 \in [0,T]$, we introduce the following auxiliary function over $[0,T]$:\vspace{-2mm}
\begin{align} \label{de-cP}
\cP (t)&\deq\mE\[ \Phi(t,T)^\top G(t_0)\Phi(t,T)+ \int_{t}^{T} \Phi(t,s)^\top \big(Q(t_0,s)+\Theta(s)^\top R(t_0,s)\Theta(s)\nonumber\\&\qq
+P_2(s)^\top M(t_0,s) P_2(s)+ C_{\Theta}(s)^\top P_2(s)^\top N(t_0,s) P_2(s) C_{\Theta}(s) \big)\Phi(t,s)ds\],
\end{align}
where $\Phi(\cd,\cd)$ is uniquely determined by \eqref{Inte}. From \eqref{de-cP}, we see that
\begin{equation}\label{pr-eq1}
\cP(t_0)= P_1(t_0,t_0)\q\hbox{and}\q \dbI_2=\frac{\cP(t)-\cP(t_0)}{t-t_0}.
\end{equation}
Moreover, by Lemma \ref{lm3.2},  we have
\begin{align}\label{pr-eq2}
\dot{\cP}(t) &=-\cP(t)A_{\Theta}(t)-A_{\Theta}(t)^\top\cP(t)-C_{\Theta}(t)^\top\cP(t)C_{\Theta}(t)- Q(t_0,t)-\Theta(t)^\top R(t_0,t)\Theta(t)\nonumber\\&\q
-P_2(t)^\top M(t_0,t) P_2(t)- C_{\Theta}(t)^\top P_2(t)^\top N(t_0,t) P_2(t) C_{\Theta}(t)
\end{align}
with\vspace{-2mm}
\begin{align}\label{pr-eq3}
\nonumber	\Theta(t)\!=&\! -\!\big[R(t,t) \!+ \! D(t)^\top \!\big( {P}_1(t,t)\! + \!  P_2(t)^\top N(t,t) P_2(t) \big)D(t)\big]^{-1} \\  \ns \ds &
\times  \big[ B(t)^\top {P}_1(t,t)     +  D(t)^\top\big({P}_1(t,t)  +  P_2(t)^\top N(t,t) P_2(t) \big)C(t)   \nonumber\\  \ns \ds  &\q
+ \big(\widehat{B}(t)^\top\! +  {B}(t)^\top  P_{2}(t)^\top   +  D(t)^\top  P_{2}(t)^\top   \widehat{D}(t)^\top\big ) G_2(t) P_{2}(t) \big].
\end{align}
From \eqref{pr-eq1}--\eqref{pr-eq3}, we know that
\begin{align}\label{pr-eq6}
\dot{\cP}(t_0)
&=-\cP(t_0)A_{\Theta}(t_0)-A_{\Theta}(t_0)^\top\cP(t_0)-C_{\Theta}(t_0)^\top\cP(t_0)C_{\Theta}(t_0)-Q(t_0,t_0)-\Theta(t_0)^\top R(t_0,t_0)\Theta(t_0)\nonumber\\
&\q -P_2(t_0)^\top M(t_0,t_0) P_2(t_0) - C_{\Theta}(t_0)^\top P_2(t_0)^\top N(t_0,t_0) P_2(t_0) C_{\Theta}(t_0)\nonumber\\
&=-P_1(t_0,t_0)A_{\Theta}(t_0)\!-\!A_{\Theta}(t_0)^\top P_1(t_0,t_0)\!-\!C_{\Theta}(t_0)^\top P_1(t_0,t_0)C_{\Theta}(t_0)\!-\!Q(t_0,t_0)\!-\!\Theta(t_0)^\top R(t_0,t_0)\Theta(t_0)\nonumber \\
&\q -P_2(t_0)^\top M(t_0,t_0) P_2(t_0) - C_{\Theta}(t_0)^\top P_2(t_0)^\top N(t_0,t_0) P_2(t_0) C_{\Theta}(t_0),\q   t_0 \in [0,T].
\end{align}
Thus for $\dbI_2$, we have\vspace{-2mm}
\begin{align}\label{pr-eq7}
\lim_{t \to t_0} \dbI_2 = \lim_{t \to t_0} \frac{\cP(t)-\cP(t_0)}{t-t_0} =\dot{\cP}(t_0), \q t_0 \in [0,T].
\end{align}
Secondly,  \vspace{-2mm}
\begin{align}\label{pr-eq4}
&\frac{d\(P_2(t)^\top G_2(t)P_2(t)\)}{dt}\nonumber\\
&= \dot{P_2}(t)^\top G_2(t)P_2(t)+ P_2(t)^\top G_2(t)\dot{P_2}(t) +P_2(t)^\top G'_2(t)P_2(t)\nonumber\\
&\geq -\big(P_2(t)A_\Theta(t)+  \widehat{A}_\Theta(t)  +\widehat{C}(t)P_2(t) +\widehat{D}(t) P_2(t)C_\Theta(t)\big)^\top G_2(t) P_2(t)\nonumber\\
&\q -P_2(t)^\top G_2(t) \big( P_2(t)A_\Theta(t)+  \widehat{A}_\Theta(t)  +\widehat{C}(t)P_2(t) +\widehat{D}(t) P_2(t)C_\Theta(t) \big),
\end{align}
where the last inequality is due to $G'_2(t)\geq 0$.

Combining \eqref{pr-eq5} and \eqref{pr-eq6}--\eqref{pr-eq4},
consequently,  for any $ t_0 \in [0,T]$, we obtain
\begin{align}\label{pr-eq8}
\dot{\dbV}(t_0)&= \lim_{t \to t_0} \frac{P_1(t,t)-P_1(t_0,t_0)}{t-t_0} +\big(P_2(t)^\top G_2(t)P_2(t)\big)'\big|_{t=t_0}\nonumber\\
& =\lim_{t \to t_0} \dbI_1+ 	\lim_{t \to t_0} \dbI_2+\big(P_2(t)^\top G_2(t)P_2(t)\big)'\big|_{t=t_0}\nonumber\\
& \geq -\big( P_1(t_0,t_0)A_{\Theta}(t_0)+A_{\Theta}(t_0)^\top P_1(t_0,t_0)+C_{\Theta}(t_0)^\top P_1(t_0,t_0)C_{\Theta}(t_0) \nonumber\\
&\q+Q(t_0,t_0)+\Theta(t_0)^\top R(t_0,t_0)\Theta(t_0) +P_2(t_0)^\top M(t_0,t_0) P_2(t_0)\nonumber\\
&\q+ C_{\Theta}(t_0)^\top P_2(t_0)^\top N(t_0,t_0) P_2(t_0) C_{\Theta}(t_0)   \big)	  \nonumber\\
&\q -P_2(t)^\top G_2(t) \big( P_2(t)A_\Theta(t)+  \widehat{A}_\Theta(t)  +\widehat{C}(t)P_2(t) +\widehat{D}(t) P_2(t)C_\Theta(t) \big)\nonumber\\
&\q 	 -\big( P_2(t)A_\Theta(t)+  \widehat{A}_\Theta(t)  +\widehat{C}(t)P_2(t) +\widehat{D}(t) P_2(t)C_\Theta(t)\big)^\top G_2(t) P_2(t).
\end{align}
Recall  \eqref{pr-eq3} and the definition of $\dbV(\cd)$, we know that
\begin{align*}
&-\!\big[R(t,t) \!+ \! D(t)^\top \!\big( {P}_1(t,t)\! + \!  P_2(t)^\top N(t,t) P_2(t) \big)D(t)\big]^{-1}\Theta(t)\\  \ns \ds
&= \big[ B(t)^\top \dbV(t)     +  D(t)^\top\big({P}_1(t,t)  +  P_2(t)^\top N(t,t) P_2(t) \big)C(t)   \nonumber\\  \ns \ds
&\q + \big(\widehat{B}(t)^\top\!   +  D(t)^\top  P_{2}(t)^\top   \widehat{D}(t)^\top\big ) G_2(t) P_{2}(t) \big],
\end{align*}
which implies\vspace{-4mm}
\begin{align*}
&-2\Theta(t)^\top\!\big[R(t,t) \!+ \! D(t)^\top \!\big( {P}_1(t,t)\! + \!  P_2(t)^\top N(t,t) P_2(t) \big)D(t)\big]^{-1}\Theta(t)\\  \ns \ds
&= \Theta(t)^\top \big[ B(t)^\top \dbV(t)     +  D(t)^\top\big({P}_1(t,t)  +  P_2(t)^\top N(t,t) P_2(t) \big)C(t)   \nonumber\\  \ns \ds
&\q + \big(\widehat{B}(t)^\top\!   +  D(t)^\top  P_{2}(t)^\top   \widehat{D}(t)^\top\big ) G_2(t) P_{2}(t) \big]   \\
&\q +\big[ \dbV(t) B(t) + C(t)^\top \big( P_1(t,t)+P_2(t)^\top N(t,t)P_2(t) \big)D(t) \\
&\qq + P_2(t)^\top G_2(t) \big( \widehat{B}(t)+\widehat{D}(t)P_2(t)D(t) \big)   \big] \Theta(t).
\end{align*}
Plugging this into \eqref{pr-eq8}, via direct computation,  we get
\begin{align*}
\dot{\dbV}(t_0)&\geq -\big( P_1(t_0,t_0)+ P_2(t_0)^\top G_2(t_0) P_2(t_0) \big) A(t_0)- A(t_0)^\top \big( P_1(t_0,t_0)+ P_2(t_0)^\top G_2(t_0) P_2(t_0) \big) \\
&\q - C(t_0)^\top P_1(t_0,t_0)C(t_0) -Q(t_0,t_0)-P_2(t_0)^\top M(t_0,t_0) P_2(t_0)\\
&\q - C(t_0)^\top P_2(t_0)^\top N(t_0,t_0) P_2(t_0) C(t_0) \\
&\q -P_2(t_0)^\top G_2(t_0) \big(  \widehat{A}(t_0)  +\widehat{C}(t_0)P_2(t_0) +\widehat{D}(t_0) P_2(t_0)C(t_0) \big)\\
&\q -\big(  \widehat{A}(t_0)  +\widehat{C}(t_0)P_2(t_0) +\widehat{D}(t_0) P_2(t_0)C(t_0)\big)^\top G_2(t_0) P_2(t_0)-\Theta(t_0)^\top\\
&\q \times \Big( B(t_0)^\top \big( P_1(t_0,t_0)+ P_2(t_0)^\top G_2(t_0) P_2(t_0) \big)     \\
&\qq   +  D(t_0)^\top\big({P}_1(t_0,t_0)  +  P_2(t_0)^\top N(t_0,t_0) P_2(t_0) \big)C(t_0)   \nonumber\\  \ns \ds
&\qq   + \big(\widehat{B}(t_0)^\top\!   +  D(t_0)^\top  P_{2}(t_0)^\top   \widehat{D}(t_0)^\top\big ) G_2(t_0) P_{2}(t_0) \Big)   \\
&\q -\Big( \big( P_1(t_0,t_0)+ P_2(t_0)^\top G_2(t_0) P_2(t_0) \big)  B(t_0) \\
&\qq + C(t_0)^\top \big( P_1(t_0,t_0)+P_2(t_0)^\top N(t_0,t_0)P_2(t_0) \big)D(t_0) \\
&\qq + P_2(t_0)^\top G_2(t_0) \big( \widehat{B}(t_0)+\widehat{D}(t_0)P_2(t_0)D(t_0) \big)   \Big) \Theta(t_0)\\
&\q -\Theta(t_0)^\top\!\big[R(t_0,t_0) \!+ \! D(t_0)^\top \!\big( {P}_1(t_0,t_0)\! + \!  P_2(t_0)^\top N(t_0,t_0) P_2(t_0) \big)D(t_0)\big]^{-1}\Theta(t_0)\\
&= -\Big( \dbV(t_0)A(t_0)+A(t_0)^\top\dbV(t_0)+C(t_0)^\top P_1(t_0,t_0)C(t_0) \\
&\qq+Q(t_0,t_0)+P_2(t_0)^\top M(t_0,t_0) P_2(t_0)+ C(t_0)^\top P_2(t_0)^\top N(t_0,t_0) P_2(t_0) C(t_0)  \\
& \qq  - \Theta(t_0)^\top \big( R(t_0,t_0)+ D(t_0)^\top P_1(t_0,t_0)D(t_0) +D(t_0)^\top P_2(t_0)^\top N(t_0,t_0) P_2(t_0)D(t_0) \big)  \Theta(t_0)  \Big)\\
&\q -P_2(t_0)^\top G_2(t_0) \big(   \widehat{A}(t_0)  +\widehat{C}(t_0)P_2(t_0) +\widehat{D}(t_0) P_2(t_0)C(t_0) \big)\\
&\q 	 -\big(  \widehat{A}(t_0)  +\widehat{C}(t_0)P_2(t_0) +\widehat{D}(t_0) P_2(t_0)C(t_0)\big)^\top G_2(t_0) P_2(t_0)\\
&\geq   -\Big( \dbV(t_0)A(t_0)+A(t_0)^\top\dbV(t_0)+C(t_0)^\top P_1(t_0,t_0)C(t_0) \\
&\qq+Q(t_0,t_0)+P_2(t_0)^\top M(t_0,t_0) P_2(t_0)+ C(t_0)^\top P_2(t_0)^\top N(t_0,t_0) P_2(t_0) C(t_0) \Big)\\
&\q -P_2(t_0)^\top G_2(t_0) \big(   \widehat{A}(t_0)  +\widehat{C}(t_0)P_2(t_0) +\widehat{D}(t_0) P_2(t_0)C(t_0) \big)\\
&\q 	 -\big(  \widehat{A}(t_0)  +\widehat{C}(t_0)P_2(t_0) +\widehat{D}(t_0) P_2(t_0)C(t_0)\big)^\top G_2(t_0) P_2(t_0),
\end{align*}
where the last inequality is deduced from the fact
\begin{align*}
\Theta(t_0)^\top \big( R(t_0,t_0)+ D(t_0)^\top P_1(t_0,t_0)D(t_0)+D(t_0)^\top P_2(t_0)^\top N(t_0,t_0) P_2(t_0)D(t_0) \big)  \Theta(t_0)  \geq 0.
\end{align*}
Then we have\vspace{-3mm}
\begin{align*}
&\dbV(t)- \big(G_1(T)+H^\top G_2(T) H)= \int_{t}^{T} -\dot{\dbV}(s)ds\\
&\leq \int_{t}^{T} \Big(   \dbV(s)A(s)+A(s)^\top\dbV(s)+C(s)^\top P_1(s,s)C(s) +Q(s,s)\\
&\qq \qq +P_2(s)^\top M(s,s) P_2(s)+ C(s)^\top P_2(s)^\top N(s,s) P_2(s) C(s) \\
&\qq \qq+ P_2(s)^\top G_2(s) \big(   \widehat{A}(s)  +\widehat{C}(s)P_2(s) +\widehat{D}(s) P_2(s)C(s) \big)\\
&\qq \qq+ \big( \widehat{A}(s)  +\widehat{C}(s)P_2(s) +\widehat{D}(s) P_2(s)C(s)\big)^\top G_2(s) P_2(s)	\Big)ds.
\end{align*}
This together with \eqref{pr-eq9} implies that
\begin{align*}
|\dbV(t)|&\leq \big(|H|^2+1\big)\widehat{\cG}+ 	\int_{t}^{T} \Big[  \( 2|A(s)|+|C(s)|^2\)|\dbV(s)|  + |\widehat{Q}|+ \widehat{\cG}^2|\widehat{A}(s)|^2 \\
& \qq + \( 1+ |\widehat{M}| +|\widehat{N}||C(s)|^2+ 2\widehat{\cG}|\widehat{C}(s)|+ 2 \widehat{\cG}|C(s)||\widehat{D}(s)|  \)
|P_2(s)|^2\Big]ds\\
&\leq \big(|H|^2+1\big)\widehat{\cG}+ 	\int_{t}^{T} \Big\{   \Big[ 2|A(s)|+|C(s)|^2  + \frac{n}{\delta} \Big( 1+ |\widehat{M}| +|\widehat{N}||C(s)|^2 \\
&\qq + 2\widehat{\cG}|\widehat{C}(s)|+ 2 \widehat{\cG}|C(s)||\widehat{D}(s)|  \Big) \Big]\dbV(s) + |\widehat{Q}|+ \widehat{\cG}^2|\widehat{A}(s)|^2 \Big\}ds,
\end{align*}
where   $\widehat{\cG}=\max\{|\widehat{G}_1|,|\widehat{G}_2|\}$, and $\delta$, $\widehat{Q}$, $\widehat{M}$, $\widehat{N}$ are given in Assumptions \ref{H2}--\ref{H3}. This, together with Gronwall's inequality, yields that
\begin{align*}\ns\ds
|\dbV(t)|\!\leq\! \Big[ \widehat{\cG}\!+\!\widehat{\cG}|H|^2\!+\! \int_{0}^{T} \(|\widehat{Q}|\!+\! \widehat{\cG}^2|\widehat{A}(s)|^2\) ds  \Big] e^{\int_{0}^{T}\big[ 2|A(s)|+|C(s)|^2 \! +\! \frac{n}{\delta} \big( 1+ |\widehat{M}| +|\widehat{N}||C(s)|^2 \!+\! 2\widehat{\cG}|\widehat{C}(s)|\!+ 2 \widehat{\cG}|C(s)||\widehat{D}(s)|  \big)\big]ds },
\end{align*}	
for any $t\in [0,T]$. The desired result is proved.
\end{proof}

Consequently, we can also get the following priori estimates for $\Theta(\cd)$.
\begin{corollary}\label{cro1}
Under Assumptions \ref{H2}-\ref{H5}, suppose the ERE \eqref{ERE} admits a solution $(P_1(\cd,\cd),P_2(\cd))\in C(\Delta^*[0,T];\dbS^n_+)\times C([0,T];\dbR^{m \times n})$.
Then the matrix-valued function $\Theta(\cd)$ defined in \eqref{Theta} satisfies the uniform norm estimate:\vspace{-2mm}
\begin{equation}
\|\Theta(\cd)\|_{C([0,T];\dbR^{k\times n})}\leq \cC^*,
\end{equation}
where the constant $\cC^*$ is explicitly given by
\begin{align}\ns\ds
&\cC^*\=  \frac{\cC^*_{\dbV}}{\delta}\Big( \|B(\cd)\|_{C([0,T];\dbR^{n\times k} )}+\|C(\cd)\|_{C([0,T];\dbR^{n\times n} )}\|D(\cd)\|_{C([0,T];\dbR^{n\times k} )}  \Big)  \nonumber\\
&\qq   + \frac{n \cC^*_{\dbV}}{\delta^2} \(  1+ |\widehat{N}| \|C(\cd)\|_{C([0,T];\dbR^{n\times n} )}\|D(\cd)\|_{C([0,T];\dbR^{n\times k} )} + \widehat{\cG} \|B(\cd)\|_{C([0,T];\dbR^{n\times k} )} \nonumber \\
& \qq \qq\q +\widehat{\cG}\|D(\cd)\|_{C([0,T];\dbR^{n\times k} )}\|\widehat{D}(\cd)\|_{C([0,T];\dbR^{m\times k} )}\)
+\frac{1}{\delta}   \widehat{\cG}^2 \|\widehat{B}(\cd)\|^2_{C([0,T];\dbR^{m\times k} )}.\label{de-C*}
\end{align}
Here  $\widehat{\cG}=\max\{|\widehat{G}_1|,|\widehat{G}_2|\}$, $\cC^*_{\dbV}$ is defined in \eqref{def-CV},  and $\delta$, $\widehat{N}$ are given in Assumptions \ref{H2}-\ref{H3}.
\end{corollary}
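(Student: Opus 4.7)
My plan is to obtain the desired pointwise bound on $\Theta(s)$ directly from its explicit formula \eqref{Theta}, combining two ingredients that have already been secured. First, Assumption \ref{H2} guarantees $R(s,s) \geq \delta I$, while $P_1(s,s) \geq 0$ and $N(s,s) \geq 0$, so
$$R(s,s) + D(s)^\top\bigl(P_1(s,s) + P_2(s)^\top N(s,s) P_2(s)\bigr)D(s) \geq \delta I,$$
and hence the matrix inverse appearing in \eqref{Theta} has spectral norm at most $\delta^{-1}$. Second, combining Proposition \ref{pri-est-Pi} with the elementary matrix inequalities \eqref{pr-eq9} derived in its proof, I have the uniform pointwise bounds
$$|P_1(s,s)| \leq \cC^*_{\dbV}, \qquad |P_2(s)|^2 \leq \tfrac{n}{\delta}\cC^*_{\dbV}, \qquad s\in[0,T].$$

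Next, I would bound each of the five summands inside the second bracket of \eqref{Theta} by submultiplicativity of the spectral norm, together with $|G_2(s)| \leq \widehat{\cG}$ and $|N(s,s)| \leq |\widehat{N}|$ from Assumption \ref{H3}, and the $C([0,T];\cd)$-sup-norms of the bounded coefficients $B,C,D,\widehat{B},\widehat{D}$. After multiplication by $\delta^{-1}$, the three terms $B^\top P_1(s,s)$, $D^\top P_1(s,s)C$ and $D^\top P_2^\top N(s,s) P_2 C$ are already polynomial in $P_2$, and after using the two displayed bounds they produce the $\tfrac{\cC^*_{\dbV}}{\delta}(\|B(\cd)\|+\|C(\cd)\|\|D(\cd)\|)$ block together with the $|\widehat{N}|\|C(\cd)\|\|D(\cd)\|$ entry of the $n\cC^*_{\dbV}/\delta^2$ block in \eqref{de-C*}. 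The two mixed quadratic-in-$P_2$ terms $B^\top P_2^\top G_2 P_2$ and $D^\top P_2^\top \widehat{D}^\top G_2 P_2$ give rise, in the same way, to the $\widehat{\cG}\|B(\cd)\|$ and $\widehat{\cG}\|D(\cd)\|\|\widehat{D}(\cd)\|$ entries inside the $n\cC^*_{\dbV}/\delta^2$ block.

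The only step requiring a small trick is the term $\widehat{B}(s)^\top G_2(s) P_2(s)$, which is only linear in $P_2$; a direct estimate would produce a $\sqrt{\cC^*_{\dbV}}$ factor rather than the polynomial form recorded in \eqref{de-C*}. To match the stated constant I would apply Young's inequality
$$|\widehat{B}(s)|\,\widehat{\cG}\,|P_2(s)| \leq \tfrac{1}{2}\widehat{\cG}^2|\widehat{B}(s)|^2 + \tfrac{1}{2}|P_2(s)|^2,$$
so that, after division by $\delta$, the first half contributes the final summand $\tfrac{1}{\delta}\widehat{\cG}^2\|\widehat{B}(\cd)\|^2_{C([0,T];\dbR^{m\times k})}$ in \eqref{de-C*}, while the second half is absorbed via $|P_2(s)|^2 \leq n\cC^*_{\dbV}/\delta$ into the ``$1$'' entry of the $n\cC^*_{\dbV}/\delta^2$ block. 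Collecting these contributions and taking the supremum over $s \in [0,T]$ reproduces exactly the constant $\cC^*$ defined in \eqref{de-C*}. I do not anticipate any genuine obstacle here: the substantive a priori analysis has already been carried out in Proposition \ref{pri-est-Pi}, and the present statement amounts to careful bookkeeping in the algebraic identity \eqref{Theta}, the only subtle choice being the use of Young's inequality on the $\widehat{B}$-term so that the final bound is polynomial rather than square-root in $\cC^*_{\dbV}$.
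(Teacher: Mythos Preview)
Your proposal is correct and matches the paper's own proof: both combine the inverse bound $\delta^{-1}$ from Assumption~\ref{H2}, the estimates $|P_1(s,s)|\le\cC^*_{\dbV}$ and $|P_2(s)|^2\le n\cC^*_{\dbV}/\delta$ from Proposition~\ref{pri-est-Pi} and \eqref{pr-eq9}, and a Young-type inequality on the linear-in-$P_2$ term $\widehat{B}^\top G_2 P_2$ to obtain exactly the constant~\eqref{de-C*}. Your write-up is in fact more explicit than the paper's about the Young step, but the argument is the same.
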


\begin{proof}
Combining the estimate from Proposition \ref{pri-est-Pi}, the governing equation \eqref{pr-eq9}, and the definition of $\Theta(\cd)$ in \eqref{Theta}, we derive the following pointwise bound for the matrix norm of $\Theta(\cd)$:
\begin{align*}
|\Theta(t)|&\leq   \frac{1}{\delta} \Big[ |P_1(t,t)| \Big( \|B(\cd)\|_{C([0,T];\dbR^{n\times k} )}  +\|C(\cd)\|_{C([0,T];\dbR^{n\times n} )}\|D(\cd)\|_{C([0,T];\dbR^{n\times k} )} \Big)  \\
& \qq+ |P_2(t)|^2 \Big( 1+ |\widehat{N}| \|C(\cd)\|_{C([0,T];\dbR^{n\times n} )}\|D(\cd)\|_{C([0,T];\dbR^{n\times k} )}  + \widehat{\cG} \|B(\cd)\|_{C([0,T];\dbR^{n\times k} )}  \\
& \qq +\widehat{\cG}\|D(\cd)\|_{C([0,T];\dbR^{n\times k} )}\|\widehat{D}(\cd)\|_{C([0,T];\dbR^{m\times k} )}   \Big) + \widehat{\cG}^2   \|\widehat{B}(\cd)\|^2_{C([0,T];\dbR^{m\times k} )} \Big] \leq \cC^*.
\end{align*}
\end{proof}

\section{Well-posedness of the ERE \eqref{ERE} with smooth coefficients}\label{sec5}

\subsection{Local solvability}

In this subsection, we prove the following local solvability result for the  ERE  \eqref{ERE}.

\begin{theorem}\label{th1}(Local Well-Posedness).
Under Assumptions \ref{H2}-\ref{H5}, there exists a $\k\in (0,T]$ such that the  ERE \eqref{ERE} admits a unique solution $(P_1(\cd,\cd),P_2(\cd))\in C(\Delta^*[T-\k,T];\dbS^n_+)\times C([T-\k,T];\dbR^{m\times n})$.
\end{theorem}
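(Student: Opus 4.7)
The plan is to reduce the problem to a fixed-point equation for the feedback $\Theta(\cdot)$ via Lemma \ref{lm3.5}, and then apply Banach's contraction principle on a short interval $[T-\kappa,T]$. Specifically, I would work with the integral equation system \eqref{Inte}, defining a Picard map $\mathcal{T}\colon \Theta(\cdot)\mapsto \wh{\Theta}(\cdot)$ on the Banach space $\mathbb{X}_\kappa := C([T-\kappa,T];\mathbb{R}^{k\times n})$ as follows: given $\Theta$ with $\|\Theta\|_\infty\leq M$, first solve the linear SDE for $\Phi(t,s)$, then the linear backward ODE for $P_2(\cdot)$ with $P_2(T)=H$, then compute $\{P_1(t,t)\}$ by the expectation formula (third line of \eqref{Inte}), and finally set $\wh{\Theta}(s)$ by \eqref{Inte-theta}. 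Positive semi-definiteness of $P_1(t,t)$ is automatic from Assumption \ref{H2} (all contributions to the integrand are PSD and $G_1(t)\geq 0$), so combined with $R(s,s)\geq\delta I$ the matrix being inverted stays $\geq\delta I$ and $\wh{\Theta}$ is well-defined and continuous by Lemma \ref{lm3.4}.

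Second, I would establish invariance of a suitable ball. At $s=T$ the output is a fixed matrix $\Theta^\star:=\wh{\Theta}(T)$ depending only on the terminal data $(G_1(T),G_2(T),H)$ and the coefficients at $T$. Choosing $M:=1+|\Theta^\star|$, Lemma \ref{lm3.4} applied on the small interval $[T-\kappa,T]$ yields
\begin{equation*}
\sup_{t\in[T-\kappa,T]}\mE\bigl[\sup_{s\in[t,T]}|\Phi(t,s)-I|^2\bigr]=O(\kappa),\quad \sup_t|P_2(t)-H|=O(\kappa),\quad \sup_t|P_1(t,t)-G_1(t)|=O(\sqrt{\kappa}),
\end{equation*}
with constants depending only on $M$ and the system data. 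Plugging these into \eqref{Inte-theta} gives $|\wh{\Theta}(s)-\Theta^\star|=O(\sqrt{\kappa})$ uniformly in $s$, so the closed ball $\bar B_M:=\{\Theta\in\mathbb{X}_\kappa:\|\Theta\|_\infty\leq M\}$ is invariant under $\mathcal{T}$ for $\kappa$ small.

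Third, I would prove contraction on $\bar B_M$. For $\Theta_1,\Theta_2\in\bar B_M$ with associated $(\Phi_i,P_2^i,P_1^i(\cdot,\cdot),\wh{\Theta}_i)$, standard SDE/ODE stability arguments (mirroring the estimates in the proof of Lemma \ref{lm3.4}, with the time integrals running over the short interval of length $\kappa$) yield
\begin{equation*}
\sup_{t,s}\mE|\Phi_1(t,s)-\Phi_2(t,s)|^2+\sup_t|P_2^1(t)-P_2^i(t)|^2+\sup_t|P_1^1(t,t)-P_1^2(t,t)|\leq C_M\kappa\,\|\Theta_1-\Theta_2\|_\infty^2,
\end{equation*}
and then the Lipschitz dependence of \eqref{Inte-theta} on $(P_1(s,s),P_2(s))$ (using the uniform lower bound $\delta I$ on the inverted matrix and uniform upper bounds on $P_1,P_2$ over $\bar B_M$) gives $\|\wh{\Theta}_1-\wh{\Theta}_2\|_\infty\leq C_M\sqrt{\kappa}\,\|\Theta_1-\Theta_2\|_\infty$. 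Choosing $\kappa$ small enough for both invariance and $C_M\sqrt{\kappa}<1$, Banach's fixed point theorem produces a unique $\Theta\in\bar B_M$, which together with the associated $(P_1(t,t),P_2(t))$ solves the IES. Lemma \ref{lm3.5} then promotes this to a unique solution $(P_1(\cdot,\cdot),P_2(\cdot))\in C(\Delta^*[T-\kappa,T];\dbS^n_+)\times C([T-\kappa,T];\dbR^{m\times n})$ of the ERE, where $P_1(t,s)$ off the diagonal is recovered by solving the first equation of \eqref{ERE} as a linear matrix ODE in $s$ for each fixed $t$.

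The main obstacle will be the contraction estimate: $\wh{\Theta}$ depends nonlinearly on $P_2$ through the terms $P_2^\top N P_2$, $P_2^\top G_2 P_2$, and $P_2^\top\wh{D}^\top$ that sit both inside and outside the inverse in \eqref{Inte-theta}, so Lipschitz bounds require combining the stability of $P_2$ with its a priori $L^\infty$ bound on $\bar B_M$; likewise $P_1^1(s,s)-P_1^2(s,s)$ involves differences of quadratic expressions in $\Phi_i$ and $P_2^i$. The saving feature is that every such difference enters through an integral of length $\leq\kappa$ or through $\Phi_i-I$ or $P_2^i-H$ of size $O(\sqrt{\kappa})$, so the overall Lipschitz constant carries a factor $\sqrt{\kappa}$ and can be made strictly less than one. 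Uniqueness in $\mathbb{X}_\kappa\setminus\bar B_M$ follows a posteriori by enlarging $M$, since the construction is independent of the particular value of $M$ once $\kappa$ is chosen accordingly.
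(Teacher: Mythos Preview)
Your approach is the paper's: reduce via Lemma~\ref{lm3.5} to the IES~\eqref{Inte}, set up a Picard map $\Theta\mapsto\wh\Theta$, prove invariance of a ball near the terminal value $\Theta^\star$, derive a contraction estimate with constant $C_M\sqrt{\kappa}$, and apply Banach's theorem. The existence half and the estimates you sketch are correct and match the paper's Steps~1--3 essentially line for line.

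The uniqueness argument, however, has a gap. You write that uniqueness outside $\bar B_M$ ``follows a posteriori by enlarging $M$, since the construction is independent of the particular value of $M$ once $\kappa$ is chosen accordingly.'' But enlarging $M$ forces you to \emph{shrink} $\kappa$ (both your invariance and contraction constants depend on $M$), so you only recover uniqueness on a smaller interval $[T-\kappa',T]$, not on the original $[T-\kappa,T]$ where you built the solution. Since a hypothetical second solution on $[T-\kappa,T]$ could a priori have arbitrarily large sup-norm, no single $M$ covers all competitors, and the argument does not close as stated. The paper plugs this hole by bringing in the a priori bound of Corollary~\ref{cro1} (which rests on Proposition~\ref{pri-est-Pi}): \emph{every} solution of the ERE satisfies $\|\Theta\|_{C}\le\cC^*$, so every solution automatically sits in the fixed ball $\dbB_T(2\cC^*)$ where the contraction holds, and uniqueness on $[T-h,T]$ is immediate. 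The paper then iterates (its Step~4) to push existence and uniqueness from $[T-h,T]$ to the full invariance interval $[T-\kappa,T]$, always remaining in the same ball. Beyond cleaning up local uniqueness, this use of the universal constant $\cC^*$ is exactly what later allows the global extension in Theorem~\ref{th2}, where the step size must not degenerate; your ad hoc radius $M=1+|\Theta^\star|$ would not directly support that.
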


\begin{proof}
By Lemma \ref{lm3.5}, it suffices to establish the well-posedness for IES \eqref{Inte}, which is divided into four steps.

\ss

{\bf Step 1.}   By virtue of Lemma \ref{lm3.4}, we introduce the mapping $\Gamma: C([T-\k,T];\dbR^{k\times n})\to C([T-\k,T];\dbR^{k\times n})$  for some $\k\in (0,T]$ (to be determined later) as follows:
\begin{align}
\Gamma [ \Theta(\cd)](t)&=\! -\!\big[R(t,t) \!+ \! D(t)^\top \!\big( {P}_1(t,t)\! + \!  P_2(t)^\top N(t,t) P_2(t) \big)D(t)\big]^{-1} \nonumber\\  \ns \ds
&\q \times  \big[ B(t)^\top {P}_1(t,t)     +  D(t)^\top\big({P}_1(t,t)  +  P_2(t)^\top N(t,t) P_2(t) \big)C(t)   \nonumber\\  \ns \ds
&\qq + \big(\widehat{B}(t)^\top\! +  {B}(t)^\top  P_{2}(t)^\top   +  D(t)^\top  P_{2}(t)^\top   \widehat{D}(t)^\top\big ) G_2(t) P_{2}(t) \big],   \q t\in [T-\k,T],
\end{align}
where $P_1(\cd,\cd)$ and $P_2(\cd)$ are determined by equations \eqref{theta-P1P2} on $[T\!-\k,T]$ for each given $\Theta(\cd)\!\in\!\! C([T-\k,T];\dbR^{k\times n})$.

Let\vspace{-3mm}
\begin{align}\label{de-ball-T}
\dbB_T(2\cC^*)\!\=\! \Big\{ \!M \! \in \dbR^{k\times n}  \,\Big|&\, \Big|M -\!\big[R(T,T) \!+ \! D(T)^\top \!\big( {G}_1(T)\! + \!  H^\top N(T,T) H \big)D(T)\big]^{-1} \nonumber\\  \ns \ds
& \times  \big[ B(T)^\top {G}_1(T)     +  D(T)^\top\big({G}_1(T)  +  H^\top N(T,T) H \big)C(T)   \nonumber\\  \ns \ds
&+ \big(\widehat{B}(T)^\top\! +  {B}(T)^\top  H^\top   +  D(T)^\top  H^\top   \widehat{D}(T)^\top\big ) G_2(T) H \big] \Big| \leq 2 \cC^*\Big\},
\end{align}
where $\cC^*$ is defined in \eqref{de-C*}.

We claim that there exists a $\k\in (0,T]$ to ensure the following property:

{\bf Property $(P)_T$}: 	For any $\Theta(\cd)\in C([T-\k,T];\dbR^{k\times n})$ satisfying $\Theta(t)\in \dbB_T(2\cC^*)$ for $t\in [T-\k,T]$, we have
\begin{equation}\label{pro-p}
\Gamma[\Theta(\cd)](t)\in  \dbB_T(2\cC^*),\q \forall t\in [T-\k,T].
\end{equation}
We shall prove this claim in this step.

Following Corollary \ref{cro1}, we first observe that
\begin{align*}
&\Big|\big[R(T,T) \!+ \! D(T)^\top \!\big( {G}_1(T)\! + \!  H^\top N(T,T) H \big)D(T)\big]^{-1}  \big[ B(T)^\top {G}_1(T)  \!   + \! D(T)^\top\big({G}_1(T)  \!+\!  H^\top N(T,T) H \big)C(T)   \nonumber\\  \ns \ds
&\q+ \big(\widehat{B}(T)^\top +  {B}(T)^\top  H^\top   +  D(T)^\top  H^\top   \widehat{D}(T)^\top\big ) G_2(T) H \big] \Big| \leq  \cC^*.
\end{align*}
Consequently, for any $\Theta(\cd)$ satisfying the hypothesis of {\bf Property $(P)_T$}, we obtain the uniform bound:
\begin{equation}\label{pr-th1-eq1}
\|\Theta(\cd)\|_{C([T-\k,T];\dbR^{k\times n})}\leq 3\cC^*.
\end{equation}
Substituting such $\Theta(\cd)$ into \eqref{theta-P1P2}, we derive that
\begin{align*}
&\Big| \Gamma[\Theta(\cd)](t)   -\!\big[R(T,T) \!+ \! D(T)^\top \!\big( {G}_1(T)\! + \!  H^\top N(T,T) H \big)D(T)\big]^{-1} \nonumber\\  \ns \ds
& \times  \big[ B(T)^\top {G}_1(T)     +  D(T)^\top\big({G}_1(T)  +  H^\top N(T,T) H \big)C(T)   \nonumber\\  \ns \ds
&\qq + \big(\widehat{B}(T)^\top\! +  {B}(T)^\top  H^\top   +  D(T)^\top  H^\top   \widehat{D}(T)^\top\big ) G_2(T) H \big] \Big|
\leq \dbL_1+\dbL_2,
\end{align*}
where\vspace{-2mm}
{\small
\begin{align}
&\dbL_1=\frac{1}{\delta^2} \Big( |R(t,t)\!-\!R(T,T)|\! +\! |D(t)\!-\!D(T)| |P_1(t,t)| |D(t)|\!+\! |D(T)||P_1(t,t)\!-\!P_1(T,T)||D(t)| \nonumber\\
&\qq\q \,+ |D(T)||P_1(T,T)||D(t)\!-\!D(T)|\!+\! |D(t)\!-\!D(T)||P_2(t)|^2|\widehat{N}||D(t)| \nonumber \\
&\qq \q\,+|D(T)||P_2(t)\!-\!P_2(T)||\widehat{N}||P_2(t)||D(t)|\!+\! |D(T)||P_2(T)||N(t,t)\!-\!N(T,T)||P_2(t)||D(t)| \nonumber\\
&\qq \q \,+ |D(T)||P_2(T)||\widehat{N}||P_2(t)\!-\!P_2(T)||D(t)|\!+\!  |D(T)||P_2(T)|^2|\widehat{N}||D(t)\!-\!D(T)|\Big) \nonumber\\
&\qq \times \Big(  \big( |B(t)| +|D(t)||C(t)|\big)|P_1(t,t)|+ \big(1+ |\widehat{N}||D(t)||C(t)|+ \widehat{\cG}|B(t)| + \widehat{\cG}|D(t)||\widehat{D}(t)| \big)  \nonumber\\
&\qq\qq	  \times|P_2(t)|^2 +  \widehat{\cG}^2|\widehat{B}(t)|^2   \Big),\label{I1}
\end{align}}
and\vspace{-3mm}
{\small
\begin{align}
&\dbL_2=\frac{1}{\delta} \Big( |B(t)-B(T)||P_1(t,t)|+ |B(T)||P_1(t,t)-P_1(T,T)|+|D(t)-D(T)||P_1(t,t)||C(t)| \nonumber\\
&\qq \q +  |D(T)||P_1(t,t)-P_1(T,T)||C(t)|+  |D(T)||P_1(T,T)||C(t)-C(T)| \nonumber\\
& \qq \q + |D(t)-D(T)||P_2(t)|^2|\widehat{N}||C(t)|+   |D(T)||P_2(t)-P_2(T)||\widehat{N}||P_2(t)||C(t)| \nonumber\\
&\qq\q +    |D(T)||P_2(T)||N(t,t)-N(T,T)||P_2(t)||C(t)|+     |D(T)||P_2(T)||\widehat{N}||P_2(t)-P_2(T)||C(t)| \nonumber\\
&\qq\q + |D(T)||P_2(T)||\widehat{N}||P_2(T)||C(t)-C(T)|+ \widehat{\cG}|\widehat{B}(t)-\widehat{B}(T)||P_2(t)| \nonumber\\
&\qq\q + |\widehat{B}(T)||G_2(t)-G_2(T)||P_2(t)|+ \widehat{\cG}|\widehat{B}(T)||P_2(t)-P_2(T)| \nonumber\\
&\qq\q + \widehat{\cG}|B(t)-B(T)||P_2(t)|^2+\widehat{\cG}|B(T)||P_2(t)-P_2(T)||P_2(t)| \nonumber\\
&\qq\q+ |B(T)||P_2(T)||G_2(t)-G_2(T)||P_2(t)| +\widehat{\cG}|B(T)||P_2(T)||P_2(t)-P_2(T)| \nonumber\\
&\qq\q + \widehat{\cG}|D(t)-D(T)||P_2(t)|^2|\widehat{D}(t)|+  \widehat{\cG}|D(T)||P_2(t)-P_2(T)||\widehat{D}(t)||P_2(t)| \nonumber\\
&\qq\q +  \widehat{\cG} |D(T)||P_2(T)||\widehat{D}(t)-\widehat{D}(T)||P_2(t)|+   |D(T)||P_2(T)||\widehat{D}(T)||G_2(t)-G_2(T)||P_2(t)| \nonumber\\
&\qq\q + \widehat{\cG} |D(T)||P_2(T)||\widehat{D}(T)||P_2(t)-P_2(T)| \Big).\label{I2}
\end{align}}
Therefore, to establish {\bf Property $(P)_T$}, it suffices to estimate the quantities $|P_1(t,t)|,|P_2(t)|,|P_1(t,t)-P_1(T,T)|, |P_2(t)-P_2(T)|$  for $t\in [T-\k,T]$. From Lemma \ref{lm3.4} (particularly equations \eqref{lm3.1-eq1}--\eqref{lm3.1-eq3}) and \eqref{pr-th1-eq1}, we obtain the following uniform bounds:
\begin{align}
&\sup_{\k\in (0,T]}\sup_{t\in[T-\k,T]}\(\mE \Big[\sup_{s\in[t,T]}|\Phi(t,s)|^2 \Big]+|P_1(t,t)|\)+\sup_{\k\in (0,T]} \|P_2(\cd)\|_{C([T-\k,T];\dbR^{m\times n})} \leq \cC(\cC^*),\label{pr-th1-eq2}
\\&|P_2(t)-P_2(T)|\leq \cC(\cC^*) |t-T|,\label{pr-th1-eq5}\\
&|P_1(t,t)-P_1(T,T)|\leq\cC(\cC^*)\big(|t-T|^{1\over 2}+|G_1(t)-G_1(T)|\big).\label{pr-th1-eq6}
\end{align}
Here $ \cC(\cC^*)$ is a general constant depending on $\cC^*$ and system parameters, but independent of $\k\in (0,T]$.

Finally, combining estimates \eqref{pr-th1-eq2}--\eqref{pr-th1-eq6} with the equicontinuity of the coefficient functions $B(\cd)$, $C(\cd)$, $D(\cd)$, $\widehat{B}(\cd)$, $\widehat{D}(\cd)$, $G_1(\cd)$, $G_2(\cd)$, $R(\cd,\cd)$ and $N(\cd,\cd)$,  we conclude that there exists a $\k\in (0,T]$, depending solely on system parameters, for which {\bf Property $(P)_T$} holds.

\ss

{\bf Step 2.} Fix $\k\in (0,T]$ with {\bf Property $(P)_T$}. Define $\Gamma_{[T-h,T]}: C([T-h,T];\dbR^{k\times n})\to C([T-h,T];\dbR^{k\times n})$ $(0<h\leq \k)$
to be the restriction of $\Gamma[\cd]$ on time interval $[T-h,T]$ as follows:
\begin{align}
\Gamma_{[T-h,T]} [ \Theta(\cd)](t) &=\! -\!\big[R(t,t) \!+ \! D(t)^\top \!\big( {P}_1(t,t)\! + \!  P_2(t)^\top N(t,t) P_2(t) \big)D(t)\big]^{-1} \nonumber\\  \ns \ds
&\q \times  \big[ B(t)^\top {P}_1(t,t)     +  D(t)^\top\big({P}_1(t,t)  +  P_2(t)^\top N(t,t) P_2(t) \big)C(t)   \nonumber\\  \ns \ds
&\qq + \big(\widehat{B}(t)^\top\! +  {B}(t)^\top  P_{2}(t)^\top   +  D(t)^\top  P_{2}(t)^\top   \widehat{D}(t)^\top\big ) G_2(t) P_{2}(t) \big], \q t\in [T-h,T],  \label{de-gama-h}
\end{align}
where $P_1(\cd,\cd)$ and $P_2(\cd)$ are determined by equation \eqref{theta-P1P2} on $[T-h,T]$  for the given $\Theta(\cd)\in C([T-h,T];\dbR^{k\times n})$.

In this step, we prove that $\Gamma_{[T-h,T]}[\cd]$ is contractive if $h$ is small enough.

Let $\Theta_1(\cd),\Theta_2(\cd)\in C([T-h,T];\dbR^{k\times n})$ with   $\Theta_i(t)\in\dbB_T(2\cC^*)$ for all $t \in [T-h,T]$, $i = 1,2$. We immediately obtain the uniform bound:\vspace{-3mm}
\begin{align}
\|\Theta_i(\cd)\|_{C([T-h,T];\dbR^{k\times n})}\leq 3\cC^*,\q i=1,2.\label{pr-th1-eq7}
\end{align}
Associated with these functions, we obtain the corresponding solution triples $(\Phi^i(\cd,\cd),P^i_1(\cd,\cd),P^i_2(\cd))$ for $i = 1,2$, satisfying the following uniform estimates:
\begin{align}
\sup_{t\in[T-h,T]}\mE \Big[\sup_{s\in[t,T]}|\Phi^i(t,s)|^2 \Big] + \|P^i_2(\cd)\|_{C([T-h,T];\dbR^{m\times n})}+ \sup_{t\in [T-h,T]}|P^i_1(t,t)|\leq \cC(\cC^*),\label{pr-th1-eq8}
\end{align}
where $\cC(\cC^*)$ is a general constant depending on $\cC^*$ and system parameters, but independent of $h\in (0,\k]$.
Moreover, by the standard estimates of SDEs, for $0< h\leq \k$, we have
\begin{align}
\sup_{t\in[T-h,T]}\mE \Big[ \sup_{s\in [t,T]} |\Phi^1(t,s)-\Phi^2(t,s)|^2 \Big]&\leq  \cC(\cC^*) \int_{T-h}^{T} |\Theta_1(s)-\Theta_2(s)|^2ds.\label{pr-th1-eq11}
\end{align}
From \eqref{pr-th1-eq7}, \eqref{pr-th1-eq8} and Gronwall's inequality, we obtain that\vspace{-2mm}
\begin{align}
\big|P^1_2(t)-P_2^2(t)\big|&\leq  \cC(\cC^*) \int_{T-h}^{T} |\Theta_1(s)-\Theta_2(s)|ds, \q t\in [T-h,T].\label{pr-th1-eq12}
\end{align}
From \eqref{pr-th1-eq7}--\eqref{pr-th1-eq11}, we   get that
\begin{align}
&\q \big| P_1^1(t,t)-P_1^2(t,t) \big|\nonumber\\
&\leq \mE  \big| \Phi^1(t,T)^\top G_1(t)\Phi^1(t,T)- \Phi^2(t,T)^\top G_1(t)\Phi^2(t,T)\big| \nonumber\\
&\q + \mE\int_{t}^{T} \Big(    \big|\Phi^1(t,s)^\top  Q(t,s)    \Phi^1(t,s) - \Phi^2(t,s)^\top  Q(t,s)    \Phi^2(t,s) \big| \nonumber \\
& \qq \qq  +   \big|\Phi^1(t,s)^\top \Theta_1   ^\top R(t,s)\Theta_1       \Phi^1(t,s) - \Phi^2(t,s)^\top  \Theta_2   ^\top R(t,s)\Theta_2      \Phi^2(t,s) \big| \nonumber\\
& \qq \qq  +   \big|\Phi^1(t,s)^\top {P^1_2}  ^\top M(t,s) P_2^1       \Phi^1(t,s) - \Phi^2(t,s)^\top  {P^2_2}  ^\top M(t,s) P_2^2      \Phi^2(t,s) \big| \nonumber\\
& \qq \qq  +   \big|\Phi^1(t,s)^\top C_{\Theta_1}    ^\top  {P^1_2} ^\top N(t,s)  {P}_2^1    C_{\Theta_1}        \Phi^1(t,s) - \Phi^2(t,s)^\top  C_{\Theta_2}    ^\top  {P^2_2} ^\top N(t,s)  {P}_2^2    C_{\Theta_2}       \Phi^2(t,s) \big| \Big) ds \nonumber\\
&\leq \cC(\cC^*) \(\int_{T-h}^{T} |\Theta_1(s)-\Theta_2(s)|^2 ds \)^{1/2},\hspace{16em} t\in [T-h,T]. \label{pr-th1-eq13}
\end{align}
Here we emphasize that the estimates \eqref{pr-th1-eq8}--\eqref{pr-th1-eq13} are uniform for $h\in (0,\k]$.
Recalling the definition of $\Gamma_{[T-h,T]}[\cd]$ in \eqref{de-gama-h},
by \eqref{pr-th1-eq8} and \eqref{pr-th1-eq12}--\eqref{pr-th1-eq13}, we have\vspace{-2mm}
\begin{align}
&\sup_{t\in [T-h,T]} \big| \Gamma_{[T-h,T]}[\Theta_1(\cd)](t)- \Gamma_{[T-h,T]}[\Theta_2(\cd)](t) \big| \nonumber\\
&\leq \cC(\cC^*)  \( \sup_{t\in [T-h,T]}\big|P^1_1(t,t)-P^1_1(t,t)\big|+ \sup_{t\in [T-h,T]} \big|P^1_2(t)-P^1_2(t)\big|  \) \nonumber\\
&\leq \cC(\cC^*)  \(\int_{T-h}^{T} |\Theta_1(s)-\Theta_2(s)|^2 ds \)^{1/2} \nonumber\\
&\leq \cC(\cC^*) \sqrt{h} \sup_{s\in [T-h,T]} |\Theta_1(s)-\Theta_2(s)|.\label{pr-th1-eq14}
\end{align}
Thus, by selecting $h$ sufficiently small so that  $ \cC(\cC^*) \sqrt{h}\leq \frac{1}{2}$,
we conclude that $\Gamma_{[T-h,T]}[\cd]$  is indeed a contraction mapping on the specified function space.

\ss

{\bf Step 3.} Having established the contraction property in Step 2, we now prove the well-posedness of the IES \eqref{Inte}  on the interval $[T-h,T]$, where $h > 0$ is chosen as in Step 2.

Define a sequence of functions $\{\Theta_n(\cd)\}_{n\geq 0}$ recursively:\vspace{-2mm}
\begin{align*}
\Theta_0(t)=&
-\!\big[R(T,T) \!+ \! D(T)^\top \!\big( {G}_1(T)\! + \!  H^\top N(T,T) H \big)D(T)\big]^{-1} \nonumber\\  \ns \ds
& \times  \big[ B(T)^\top {G}_1(T)     +  D(T)^\top\big({G}_1(T)  +  H^\top N(T,T) H \big)C(T)   \nonumber\\  \ns \ds
&+ \big(\widehat{B}(T)^\top\! +  {B}(T)^\top  H^\top   +  D(T)^\top  H^\top   \widehat{D}(T)^\top\big ) G_2(T) H \big],& t\in [T-h,T],\\\ns\ds
\Theta_{n+1}(t)&=\Gamma_{[T-h,T]}[\Theta_n(\cd)](t),& t\in [T-h,T],\q  n\geq 0.
\end{align*}
By \eqref{pro-p}, each iteration remains in the desired ball: \vspace{-2mm}
\begin{align*}
\Theta_n(t)\in \dbB_T(2\cC^*),\q t\in [T-h,T],\q n\geq 0.
\end{align*}
The contraction estimate \eqref{pr-th1-eq14} yields \vspace{-2mm}
\begin{align*}
\|\Theta_{n+2}(\cd)-\Theta_{n+1}(\cd)\|_{C([T-h,T];\dbR^{k\times n})} \leq \frac{1}{2} \|\Theta_{n+1}(\cd)-\Theta_{n}(\cd)\|_{C([T-h,T];\dbR^{k\times n})}.
\end{align*}
This implies that $\{\Theta_n(\cd)\}_{n\geq 0}$ is a Cauchy sequence in $C([T-h,T];\dbR^{k\times n})$,
and hence converges uniformly to a limit function  $\Theta^*(\cd) \in C([T-h,T];\dbR^{k\times n})$.
The closedness of $\dbB_T(2\cC^*)$ ensures
\begin{align*}
\Theta^*(t)\in \dbB_T(2\cC^*),\q t\in [T-h,T].
\end{align*}
Then\vspace{-4mm}
\begin{align}
&\qq \big\|\G_{[T-h,T]}[\Theta^*(\cd)] - \Theta^*(\cd) \big\|_{C([T-h,T];\,\dbR^{k\times n})} \nonumber\\
&\q= \lim_{k \to \infty}  \big\|\G_{[T-h,T]}[\Theta^*(\cd)] - \Theta^*(\cd)
-\G_{[T-h,T]}[\Theta_k(\cd)]+\Theta_{k+1}(\cd)  \big\|_{C([T-h,T];\,\dbR^{k\times n})} \nonumber\\
&\q\leq\lim_{k \to \infty}  \Big(   \frac{1}{2} 	\|\Theta_{k}(\cd)-	\Theta^*(\cd)\|_{C([T-h,T];\,\dbR^{k\times n})}
+\|\Theta_{k+1}(\cd)-	\Theta^*(\cd)\|_{C([T-h,T];\,\dbR^{k\times n})} \Big ) \nonumber\\
&\q=0.\label{pr-th1-eq15}
\end{align}
Therefore, $\Theta^*(\cd)$ is a fixed point of $\Gamma_{[T-h,T]}[\cd]$.

Substituting $\Theta^*(\cd)$ into \eqref{theta-P1P2} yields a solution $(\{P^*_1(t,t);t\in [T-h,T]\},P^*_2(\cd))$ of IES \eqref{Inte} on $[T-h,T]$.

On the other hand, Corollary \ref{cro1} shows that $\Theta(\cd)$ corresponding to any solution of  IES \eqref{Inte} must satisfy\vspace{-1mm}
\begin{align*}
\Theta(t)\in \dbB_T(2\cC^*),\q t\in [T-h,T].
\end{align*}
This combined with the fact that $\Gamma_{[T-h,T]}[\cd]$ is a contraction mapping  for
any $\Theta(\cd)\in C([T-h,T];\dbR^{k\times n})$ valued in $\dbB_T(2\cC^*)$,
leads to the uniqueness of the solution of IES \eqref{Inte} over $[T-h,T]$.

\ss

{\bf Step 4}. In this step, we complete the proof using an inductive argument on the time interval. Without loss of generality, we assume $2h\leq \k$ and establish the well-posedness of IES \eqref{Inte} on the extended interval $[T-2h,T]$.

Define the sequence $\{\Theta_n(\cd)\}_{n\geq 0}$ for the mapping $\Gamma_{[T-2h,T]}[\cd]$ as follows:
\begin{align*}
&\Theta_0(t)=\Theta^*(T-h)\chi_{[T-2h,T-h]}(t)+\Theta^*(t)\chi_{(T-h,T]}(t),\q t\in [T-2h,T],\\ \ns\ds
&\Theta_{n+1}(t)=\Gamma_{[T-2h,T]}[\Theta_n(\cd)](t),\q t\in [T-2h,T],\q n\geq 0.
\end{align*}
Since $\Theta_0(t)$ takes values in $\dbB_T(2\cC^*)$ for $t\in[T-2h,T]$, from {\bf Property $(P)_T$}, we have
\begin{align*}
\Theta_n(t)\in \dbB_T(2\cC^*),\q t\in [T-2h,T],\q n\geq 0.
\end{align*}
The consistency condition \eqref{pr-th1-eq15} from Step 3 yields:
\begin{align}
\Theta_n(t)=\Theta^*(t),\q t\in [T-h,T],\q n\geq 0.\label{pr-th1-eq16}
\end{align}
Using the uniform bounds \eqref{pr-th1-eq8}--\eqref{pr-th1-eq13} and following similar arguments as in \eqref{pr-th1-eq14}, we obtain
\begin{align*}
\big|\Theta_{n+2}(t)-	\Theta_{n+1}(t)\big|
&= \big|\G_{[T-2h,T]}[\Theta_{n+1}(\cd)](t) - \G_{[T-2h,T]}[\Theta_{n}(\cd)](t)  \big|\\
&\leq 	\cC(\cC^*)  \(\int_{T-2h}^{T}  |\Theta_{n+1}(s)-{\Theta_{n}}(s)|^2ds\)^{1/2}\\
&\leq 	\cC(\cC^*)  \(\int_{T-2h}^{T-h}  |\Theta_{n+1}(s)-{\Theta_{n}}(s)|^2ds\)^{1/2},\q t\in [T-2h,T],
\end{align*}
where $\cC(\cC^*)$ is the same as that in \eqref{pr-th1-eq14} and the final inequality follows from \eqref{pr-th1-eq16}.
This leads to the contraction estimate: \vspace{-1mm}
\begin{align*}
\|\Theta_{n+2}(\cd)-\Theta_{n+1}(\cd)\|_{C([T-2h,T];\,\dbR^{k\times n})}
&\leq\cC(\cC^*) \sqrt{h}\|\Theta_{n+1}(\cd)-\Theta_{n}(\cd)\|_{C([T-2h,T];\,\dbR^{k\times n})}\\
&\leq  \frac{1}{2}\|\Theta_{n+1}(\cd)-	\Theta_{n}(\cd)\|_{C([T-2h,T];\,\dbR^{k\times n})}.
\end{align*}
Therefore, following the same methodology as in Step 3:
we can prove the well-posedness of IES \eqref{Inte} over $[T-2h,T]$.
By iterating this procedure, we extend the well-posedness result to the entire interval $[T-\k,T]$, completing the proof.
\end{proof}
\subsection{Global solvability}

In this subsection, we prove the following global solvability result for the  ERE  \eqref{ERE}.

\begin{theorem} \label{th2}(Global Well-Posedness).
Under Assumptions (H2)--(H5), the  ERE  \eqref{ERE} admits a unique solution $(P_1(\cd,\cd),P_2(\cd))\in C(\Delta^*[0,T];\dbS^n_+)\times C([0,T];\dbR^{m\times n})$.
\end{theorem}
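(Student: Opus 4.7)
The plan is to prove global well-posedness by a continuation-of-solutions argument powered by the uniform a priori estimates of Proposition~\ref{pri-est-Pi} and Corollary~\ref{cro1}. I would let
\[
\tau^{*} := \inf\bigl\{\tau \in [0,T] : \text{\eqref{ERE} admits a unique solution in } C(\Delta^{*}[\tau,T];\dbS^n_+)\times C([\tau,T];\dbR^{m\times n})\bigr\},
\]
and aim to show $\tau^{*}=0$. By Theorem~\ref{th1} we already have $\tau^{*}\leq T-\k<T$; the critical structural point throughout is that the constants $\cC^{*}_{\dbV}$ and $\cC^{*}$ depend only on the system parameters, so they control \emph{any} local solution uniformly, irrespective of its interval of existence.

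First, I would show that the solution extends continuously up to $\tau^{*}$, i.e.~that a solution actually exists in the required class on $\Delta^{*}[\tau^{*},T]$ and $[\tau^{*},T]$. For any $\tau>\tau^{*}$ the uniform bounds $|P_1(t,t)|\leq\cC^{*}_{\dbV}$, $|P_2(t)|^{2}\leq(n/\delta)\cC^{*}_{\dbV}$ and $|\Theta(t)|\leq\cC^{*}$ are in force on $[\tau,T]$. Feeding the bound on $\Theta$ back into the Lipschitz estimate \eqref{lm3.1-eq2} and the modulus-of-continuity estimate \eqref{lm3.1-eq3} of Lemma~\ref{lm3.4} gives uniform equicontinuity of $t\mapsto P_2(t)$ and $t\mapsto P_1(t,t)$ on $(\tau^{*},T]$, so these maps extend continuously to $\tau^{*}$. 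For the off-diagonal values $P_1(t,s)$ with $t<\tau^{*}\leq s$, once $\Theta(\cdot)$ and $P_2(\cdot)$ are fixed continuous functions on $[\tau^{*},T]$ the equation for $P_1(t,\cdot)$ becomes a \emph{linear} backward matrix ODE in $s$ with continuous coefficients, so $P_1(\tau^{*},\cdot)$ is recovered by direct integration from the terminal datum $G_1(\tau^{*})$.

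Second, I would rule out $\tau^{*}>0$ by contradiction. Assuming $\tau^{*}>0$, I would replay the Picard/contraction construction of Theorem~\ref{th1} on the slightly enlarged interval $[\tau^{*}-h,T]$, using as initial iterate the already-constructed $\Theta^{*}$ on $[\tau^{*},T]$ frozen at the value $\Theta^{*}(\tau^{*})$ on $[\tau^{*}-h,\tau^{*}]$, exactly in the spirit of Step~4 in the proof of Theorem~\ref{th1}. Since $|\Theta^{*}(\tau^{*})|\leq\cC^{*}$ by Corollary~\ref{cro1}, the iterates remain inside an analogue of the ball \eqref{de-ball-T} centred at this frozen value, and a contraction estimate of the type \eqref{pr-th1-eq14} then holds with a constant depending only on $\cC^{*}_{\dbV}$, $\cC^{*}$, and the equicontinuity moduli of the system coefficients on $[0,T]$. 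Consequently $h>0$ can be chosen \emph{independently} of $\tau^{*}$, producing a unique solution on $[\tau^{*}-h,T]$ and contradicting the definition of $\tau^{*}$.

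The main obstacle I anticipate is verifying the analogue of Property~$(P)_{T}$ at the intermediate time $\tau^{*}$ rather than at the original terminal $T$. Concretely, one must revisit the pointwise difference estimates of the type \eqref{I1}--\eqref{I2}, replacing the terminal data $G_1(T),H,G_2(T)$ by the already-constructed values $P_1(\tau^{*},\tau^{*}), P_2(\tau^{*}), G_2(\tau^{*})$, and check that every resulting difference factor is still controlled by a modulus of continuity that vanishes as $h\to0^{+}$. The uniform equicontinuity of the coefficients on $[0,T]$, together with the a priori bounds $\cC^{*}_{\dbV}$ and $\cC^{*}$, is exactly what keeps the contraction length $h$ bounded away from zero as $\tau^{*}$ is driven towards $0$; once that is secured, the induction covers $[0,T]$ in finitely many steps and the proof of Theorem~\ref{th2} is complete.
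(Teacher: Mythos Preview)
Your proposal is correct and follows essentially the same route as the paper: leverage the uniform a~priori bounds $\cC^{*}_{\dbV}$, $\cC^{*}$ to establish an analogue of Property~$(P)$ at intermediate times, then re-run the Picard/contraction argument of Theorem~\ref{th1} with a step size independent of the current left endpoint. The only cosmetic difference is that the paper phrases the extension as a direct finite induction with fixed step~$\k$ (proving Property~$(P)_{T-\k}$ with $\e=\k$ and iterating), whereas you frame it as a continuation argument via an infimum~$\tau^{*}$; this obliges you to insert the extra step of extending the solution continuously down to $\tau^{*}$, which the paper's formulation avoids.
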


\begin{proof}
By Lemma \ref{lm3.5}, it suffices to establish the well-posedness of the IES \eqref{Inte} over the interval $[0,T]$.	

First, we define the mapping $\Gamma : C([T-\k-\e,T];\dbR^{k\times n}) \to C([T-\k-\e,T];\dbR^{k\times n}) $ in the following
(where $\k$ is as in Theorem \ref{th1} and $\e$ will be chosen later): 	
For any $\Theta(\cd)\in C([T-\k-\e,T];\dbR^{k\times n})$,\vspace{-1mm}
\begin{align}
\Gamma [ \Theta(\cd)](t)&=\! -\!\big[R(t,t) \!+ \! D(t)^\top \!\big( {P}_1(t,t)\! + \!  P_2(t)^\top N(t,t) P_2(t) \big)D(t)\big]^{-1} \nonumber\\  \ns \ds
&\q \times  \big[ B(t)^\top {P}_1(t,t)     +  D(t)^\top\big({P}_1(t,t)  +  P_2(t)^\top N(t,t) P_2(t) \big)C(t)   \nonumber\\  \ns \ds
&\qq + \big(\widehat{B}(t)^\top\! +  {B}(t)^\top  P_{2}(t)^\top   +  D(t)^\top  P_{2}(t)^\top   \widehat{D}(t)^\top\big ) G_2(t) P_{2}(t) \big],   \q t\in [T-\k-\e,T],
\end{align}
where $P_1(\cd,\cd)$ and $P_2(\cd)$ are given by \eqref{theta-P1P2} (with the time interval adjusted to $[T-\k-\e,T]$).

Next, we introduce the notation\vspace{-1mm}
\begin{align}\label{de-ball-T-k}
\dbB_{T-\k}(2\cC^*)\!\=\! \big\{ \!\Theta \! \in \dbR^{k\times n}  \,\big|&\, \big|\Theta  - \Theta^*(T-\k)  \big| \leq 2 \cC^*\big\},
\end{align}
where $\cC^*$ is defined in \eqref{de-C*} and $\Theta^*(\cd)$ is the unique fixed point determined in Step 4 of Theorem \ref{th1}.

We claim that, by choosing $\e=\k$, the following property holds:

{\bf Property $(P)_{T-\k}$:} 	For any $\Theta(\cd)\in C([T-2\k,T-\k];\dbR^{k\times n})$ satisfying $\Theta(T-\k)=\Theta^*(T-\k)$
and $\Theta(t)\in \dbB_{T-\k}(2\cC^*)$ for $t\in [T-2\k,T-\k]$, we have
\begin{equation}\label{pro-p-T-k}
\Gamma[\Theta(\cd)\chi_{[T-2\k,T-\k]}(\cd)+ \Theta^*(\cd)\chi_{(T-k,T]}(\cd)   ](t)\in  \dbB_{T-k}(2\cC^*),\q \forall t\in [T-2\k,T-\k].
\end{equation}

Now we prove this claim. Fix $\Theta(\cd)\in C([T-2\k,T-\k];\dbR^{k\times n})$ as above. By Theorem \ref{th1} and Corollary \ref{cro1}, we have $|\Theta^*(t)|\leq \cC^*$ for $t\in [T-\k,T]$.
Therefore,\vspace{-1mm}
\begin{align}
| \Theta(t)\chi_{[T-2\k,T-\k]}(t)+ \Theta^*(t)\chi_{(T-k,T]}(t)  |\leq 3 \cC^*,\q t\in [T-2\k,T].\label{pr-th2-eq1}
\end{align}
Using Theorem \ref{th1} again, we have\vspace{-1mm}
\begin{align*}
\Gamma[\Theta(\cd)\chi_{[T-2\k,T-\k]}(\cd)+ \Theta^*(\cd)\chi_{(T-k,T]}(\cd)   ](t)= \Theta^*(t),\q t\in [T-\k,T],
\end{align*}
Consequently,\vspace{-1mm}
\begin{align}
&\q \big| \Gamma[ \Theta(\cd)\chi_{[T-2\k,T-\k]}(\cd)+ \Theta^*(\cd)\chi_{(T-k,T]}(\cd)  ](t) -\Theta^*(T-\k) \big|\nonumber\\
&=\big| \Gamma[ \Theta(\cd)\chi_{[T-2\k,T-\k]}(\cd) \!+ \! \Theta^*(\cd)\chi_{(T-k,T]}(\cd)  ](t)  \!- \! \Gamma[ \Theta(\cd)\chi_{[T-2\k,T-\k]}(\cd) \!+ \! \Theta^*(\cd)\chi_{(T-k,T]}(\cd)  ](T-\k)  \big|\label{pr-th2-eq1.1}\\
&\leq \dbL_1+\dbL_2,\nonumber
\end{align}
where $\dbL_1$ and $\dbL_2$ are the same as those in \eqref{I1}--\eqref{I2} (with $T$ replaced by $T-\k$).
Note that with \eqref{pr-th2-eq1}, by Lemma \ref{lm3.4},  we can similarly obtain the following estimates:\vspace{-1mm}
\begin{align}
&  \sup_{t\in [T-2\k,T]}|P_1(t,t)|+ \|P_2(\cd)\|_{C([T-2\k,T];\dbR^{m\times n})} \leq  \cC(\cC^*),\label{pr-th2-eq1.2}\\
&  \qq |P_2(t)-P_2(T-\k)|\leq \cC(\cC^*) |t-(T-\k)|,\label{pr-th2-eq1.3}
\end{align}
and\vspace{-1mm}
\begin{align}
&|P_1(t,t)-P_1(T-\k,T-\k)|\nonumber\\ &\leq\cC(\cC^*)\(|t\!-(T\!-\!\k)|^{1\over 2}+|G_1(t)\!-\!G_1(T\!-\!\k)|\) +\cC(\cC^*)\int_{T-\k}^T\! \Big(|Q(t,s)\!-\!Q(T\!-\!\k,s)|+|R(t,s)\!-\!R(T\!-\!\k,s)|\nonumber \\
&   \q  + |M(t,s)-M(T-\k,s)|+|N(t,s)-N(T-\k,s)|	\Big)ds,\label{pr-th2-eq1.4}
\end{align}
where the above $\cC(\cC^*)$ is the same as that in \eqref{pr-th1-eq2}--\eqref{pr-th1-eq6}.

Combining \eqref{pr-th2-eq1.1}--\eqref{pr-th2-eq1.4} with  the equicontinuity and uniform continuity of the coefficients yields the claim.

By iterating the above argument (following Steps 2--4 of Theorem \ref{th1}), we extend the well-posedness of \eqref{Inte} piecewise to $[T-2\k,T]$, and subsequently to the full interval $[0,T]$.
\end{proof}

\section{Well-posedness of the ERE \eqref{ERE} with non-smooth coefficients}\label{sec6}

In this section, we prove Theorem \ref{Th2}.
\begin{proof}[Proof of Theorem \ref{Th2}]
By Theorem \ref{th2}, establishing the well-posedness of the IES \eqref{Inte} reduces to constructing suitable approximations of the system coefficients.

To this end, we select uniformly bounded sequences {\small$\{A_n(\cd),C_n(\cd)\}_{n\geq 1 } \!\subset \! C^{\infty}([0,T]; \dbR^{n\times n})$,
$\{B_n(\cd), D_n(\cd)\}_{n\geq 1 } \!\!\subset\!\! C^{\infty} \!([0,T]; \dbR^{n\times k})$,
$\{\widehat{C}_n(\cd), \!\widehat{D}_n(\cd)\}_{n\geq 1 } \!\!\subset\!\! C^{\infty} \!([0,T]; \dbR^{m\times m})$,
$\{\widehat{A}_n(\cd)\}_{n\geq 1 }  \!\!\subset\! \! C^{\infty} \!([0,T]; \dbR^{m\times n})$,
$\{\widehat{B}_n(\cd)\}_{n\geq 1 } \!\!\subset\!\! C^{\infty} \!([0,T]; \dbR^{m\times k})$}
such that\vspace{-1mm}
\begin{align*}
&\lim_{n\to \infty} \big(\|A_n- A \|_{L^2(0,T; \dbR^{n\times n})}\!+\!\|C_n -C \|_{L^2(0,T; \dbR^{n\times n})}\!+\!\|B_n - B \|_{L^2(0,T; \dbR^{n\times k})}+\|D_n -D \|_{L^2(0,T; \dbR^{n\times k})}\big) =0,  \\
& \lim_{n\to \infty} \big(\|\widehat{A}_n -\widehat{A} \|_{L^2(0,T; \dbR^{m\times n})}\!+\!\|\widehat{C}_n\! -\widehat{C} \|_{L^2(0,T; \dbR^{m\times m})}\!+\!\|\widehat{B}_n -\widehat{B} \|_{L^2(0,T; \dbR^{m\times k})}\!+\!\|\widehat{D}_n -\widehat{D}  \|_{L^2(0,T; \dbR^{m\times m})}\big)=0.	
\end{align*}
For the weighting matrices, Lemma \ref{lm3.3} guarantees the existence of approximating sequences:  $\{G_1^n(\cd)\}_{n\geq 1} \subset  C^{\infty}([0,T];\,\dbS^n)$,
$\{G_2^n(\cd)\}_{n\geq 1} \subset  C^{\infty}([0,T];\,\dbS^m)$,  $\{Q_n(\cd,\cd)\}_{n\geq 1}\subset C([0,T]^2;\, \dbS^{n})$,
$\{R_n(\cd,\cd)\}_{n\geq 1}\subset C([0,T]^2;\, \dbS^{k})$, and  $\{M_n(\cd,\cd), N_n(\cd,\cd)\}_{n\geq 1}\subset C([0,T]^2;\, \dbS^{m})$ such that
\begin{itemize}
\item  $ 0 \leq G_1^n(t)\leq G_1^n(r) \leq \widehat{G}_1,\q  \delta I \leq G_2^n(t)\leq G_2^n(r) \leq \widehat{G}_2,\q   0\leq t \leq r \leq T,$
\item  $  \lim\limits_{n\to \infty}\|G_1^n(\cd)-G_1(\cd)\|_{C([0,T];\,\dbS^n)}=0,$\q   $\lim\limits_{n\to \infty}\|G_2^n(\cd)-G_2(\cd)\|_{C([0,T];\,\dbS^m)}=0,$
\item  $ \text{for any } 0\leq s \leq T, \,  Q_n(t,s), R_n(t,s), M_n(t,s) \text{ and } N_n(t,s)   \text{ are infinitely differentiable for $t$,}$
\item  $0 \leq Q_n(t,s)\leq Q_n(r,s) \leq \widehat{Q},  \q  0 \leq M_n(t,s)\leq M_n(r,s) \leq \widehat{M},$ \\
$0 \leq N_n(t,s)\leq N_n(r,s) \leq \widehat{N}, \q\delta I \leq R_n(t,s)\leq R_n(r,s) \leq \widehat{R},\q  0\leq t \leq r \leq s \leq T,$
\item  $\ds\!\!\!\lim\limits_{n \to \infty} \!\!\int_{0}^{T}\!\!\!\! \sup\limits_{t\in [0,s]} \!\!\big(\big|  Q_n(t,s) \! - \! Q(t,s)  \big|^2 \! \!+\!\big|  M_n(t,s) \! - \! M(t,s)  \big|^2\! \!+\!\big|  N_n(t,s) \! -\!  N(t,s)  \big|^2\!\! +\! \big|R_n(t,s) \! -\!  R(t,s)  \big|^2\big) ds \!= \!0$.
\end{itemize}
For each $n$, consider the IES \eqref{Inte} with coefficients $\{A_n,B_n, C_n,D_n,\widehat{A}_n,\widehat{B}_n,\widehat{C}_n,\widehat{D}_n,Q_n,R_n,M_n,N_n,G_1^n,\\G_2^n\}_{n\geq 0}$. By Theorem \ref{th2}, there exists a unique solution $(P^n_1(\cd,\cd),P_2^n(\cd))$ satisfying the uniform estimates:
\begin{align*}
&\sup_{n\geq 0} \|\Theta_n(\cd)\|_{C([0,T]; \dbR^{n \times n})}\leq \cC(\cC^*),\q
\sup_{n\geq 0} \sup_{t\in [0,T]} \mE \Big[  \sup_{s\in [t,T]}  |\Phi_n(t,s)|^2\Big]\leq  \cC(\cC^*),\\\ns
&\sup_{n\geq 0} \sup_{t\in [0,T]} |P_1^n(t,t)|\leq  \cC(\cC^*),\q
\sup_{n \geq 0} \|P_2^n(\cd) \|_{C([0,T];\dbR^{m\times n})}\leq  \cC(\cC^*),
\end{align*}
where $ \cC(\cC^*)$ depends only on $\cC^*$ and the system parameters. Since\vspace{-1mm}
\begin{align*}
&|\Theta_n(t)-\Theta_m(t)|\\ &\leq  \cC(\cC^*) \Big(  |P_1^n(t,t)-P_1^m(t,t)|\! +\! |P_2^n(t)\!-\!P_2^m(t)|\!+\!  |R_n(t,t)-R_m(t,t)|+  |N_n(t,t)-N_m(t,t)| \! + \! |B_n(t)-B_m(t)|\\
&\qq \q+  |C_n(t)\!-\!C_m(t)|\!+\! |D_n(t)-D_m(t)| +|\widehat{B}_n(t)-\widehat{B}_m(t)|\!+\!|\widehat{D}_n(t)\!-\!\widehat{D}_m(t)|+ |G_2^n(t)\!-\!G_2^m(t)| \Big),
\end{align*}
by the standard estimates for SDEs, we have\vspace{-1mm}
\begin{align*}
&\mE\Big[ \sup_{s\in [t,T]} |\Phi_n(t,s)-\Phi_m(t,s)|^2 \Big]\nonumber\\ \ns
&\leq \cC(\cC^*) \int_{t}^{T}  \Big( |A_n-A_m|^2+ |B_n-B_m|^2+ |C_n-C_m|^2+ |D_n-D_m|^2  + |\Theta_n-\Theta_m|^2 \Big)ds\nonumber\\ \ns
&\leq \cC(\cC^*) \int_{t}^{T}  \Big(    |P_1^n(s,s)-P_1^m(s,s)|^2 + |P_2^n-P_2^m|^2+    |A_n-A_m|^2+ |B_n-B_m|^2 \nonumber\\\ns
& \qq \qq \qq  + |C_n-C_m|^2+ |D_n-D_m|^2 +|\widehat{B}_n-\widehat{B}_m|^2+|\widehat{D}_n-\widehat{D}_m|^2+ |G_2^n-G_2^m|^2\nonumber \\\ns
&\qq\qq\qq \qq    +    |R_n(s,s)-R_m(s,s)|^2+  |N_n(s,s)-N_m(s,s)|^2  \Big)ds.
\end{align*}
Combining these with Gronwall's inequality, we obtain \vspace{-1mm}
\begin{align}
& \q|P_1^n(t,t)-P_1^m(t,t)|^2  \nonumber\\
&\leq  \cC(\cC^*)  \Big[   \|G_1^n(\cd)-G_1^m(\cd)\|^2_{C([0,T];\dbS^n)}  +     \|G_2^n(\cd)-G_2^m(\cd)\|^2_{C([0,T];\dbS^m)}  \nonumber \\
& \qq \qq +   \int_{0}^{T} \Big( \sup_{t\in [0,s]} |Q_n(t,s)-Q_m(t,s)|^2+   \sup_{t\in [0,s]} |R_n(t,s)-R_m(t,s)|^2  \nonumber\\
&  \qq \qq \qq \q +  \sup_{t\in [0,s]} |M_n(t,s)-M_m(t,s)|^2+  \sup_{t\in [0,s]} |N_n(t,s)-N_m(t,s)|^2 \Big)ds  \nonumber \\
&\qq \qq + \int_{t}^{T} \Big( |P_1^n(s,s)-P_1^m(s,s)|^2+ |P_2^n-P_2^m|^2+|A_n-A_m|^2+ |B_n-B_m|^2  \nonumber\\
&\qq \qq \qq \qq +  |C_n-C_m|^2+ |D_n-D_m|^2+ |\widehat{B}_n-\widehat{B}_m|^2+  |\widehat{D}_n-\widehat{D}_m|^2  \Big)   ds \Big].\label{pr-pr5.1-eq2}
\end{align}
On the other hand, for $P_2^n(\cd)$ and $P_2^m(\cd)$, we have\vspace{-1mm}
\begin{align}
|P_2^n(t)-P_2^m(t)|^2&\leq \cC(\cC^*)   \int_{t}^{T} \Big( |P_1^n(s,s)-P_1^m(s,s)|^2+ |P_2^n-P_2^m|^2+|A_n-A_m|^2 \nonumber\\
&\qq \qq \qq  + |B_n-B_m|^2   +  |C_n-C_m|^2+ |D_n-D_m|^2+ |\widehat{A}_n-\widehat{A}_m|^2   \nonumber\\
&\qq \qq \qq \qq    + |\widehat{B}_n-\widehat{B}_m|^2+  |\widehat{C}_n-\widehat{C}_m|^2+  |\widehat{D}_n-\widehat{D}_m|^2  \Big)   ds. \label{pr-pr5.1-eq3}
\end{align}
Putting \eqref{pr-pr5.1-eq2}--\eqref{pr-pr5.1-eq3} together, by Gr\"onwall's inequality, we have
\begin{align*}
&\sup_{t\in [0,T]}  \Big( |P_1^n(t,t)-P_1^m(t,t)|^2+ 	|P_2^n(t)-P_2^m(t)|^2 \Big)\\
&\leq   \cC(\cC^*)  \Big[   \|G_1^n(\cd)-G_1^m(\cd)\|^2_{C([0,T];\dbS^n)}  +     \|G_2^n(\cd)-G_2^m(\cd)\|^2_{C([0,T];\dbS^m)}   \\
& \qq \qq +   \int_{0}^{T} \Big( \sup_{t\in [0,s]} |Q_n(t,s)-Q_m(t,s)|^2+   \sup_{t\in [0,s]} |R_n(t,s)-R_m(t,s)|^2  \\
&  \qq \qq \qq \q +  \sup_{t\in [0,s]} |M_n(t,s)-M_m(t,s)|^2+  \sup_{t\in [0,s]} |N_n(t,s)-N_m(t,s)|^2 \Big)ds  \\
&\qq \qq + \int_{0}^{T} \Big( |A_n-A_m|^2+ |B_n-B_m|^2  +  |C_n-C_m|^2+ |D_n-D_m|^2\\
&\qq \qq \qq \q \, +|\widehat{A}_n-\widehat{A}_m|^2 + |\widehat{B}_n-\widehat{B}_m|^2+ |\widehat{C}_n-\widehat{C}_m|^2+  |\widehat{D}_n-\widehat{D}_m|^2  \Big)   ds \Big].
\end{align*}
Thus $\big\{\big(\{P_1^n(t,t);t\in [0,T]\}, P^n_2(\cd)\big)\big\}_{n\geq 0}$ is a  Cauchy sequence in $C([0,T];\dbS^n)\times C([0,T];\dbR^{m \times n})$.
Taking $n\to\infty$ yields a solution to the original IES \eqref{Inte}. Uniqueness follows directly from the estimates.
\end{proof}

\end{document}